\newcommand{\clevertheorem}[3]{%
  \newtheorem{#1}[thm]{#2}
  \crefname{#1}{#2}{#3}
}
\numberwithin{equation}{section} 
\numberwithin{figure}{section} 
\theoremstyle{plain} 
\newtheorem{thm}{Theorem}[section]
\crefname{thm}{Theorem}{Theorems}
\newtheorem*{thm*}{Theorem}
\newtheorem*{prop*}{Proposition}
\theoremstyle{definition} 
\theoremstyle{remark} 
\newtheorem*{remark*}{Remark}
\let\c@equation\c@thm\makeatother
\let\c@figure\c@thm\makeatother
\crefname{figure}{Figure}{Figures}
\crefname{equation}{Display}{Displays} 
\crefname{eq}{Display}{Displays}
\crefname{eqn}{Display}{Displays}
\newcommand{\shspace}{\hspace{.5mm}} 
\newcommand{\sh}[1]{{\ensuremath{\shspace\hspace{1mm}\makebox[-1mm]{$\langle$}\makebox[0mm]{$\langle$}\hspace{1mm}{#1}\makebox[1mm]{$\rangle$}\makebox[0mm]{$\rangle$}\shspace}}}
\DeclareMathOperator{\End}{End}
\DeclareMathOperator{\Mor}{Mor}
\newcommand{\xto}[1]{\xrightarrow{#1}}
\DeclareMathOperator{\tr}{tr}
\DeclareMathOperator{\Map}{Map}
\DeclareMathOperator{\id}{id}
\DeclareMathOperator{\Mod}{Mod}
\DeclareMathOperator{\Hom}{Hom}
\DeclareMathOperator{\ob}{ob}
\newcommand{\ghost}{iterated trace }
\begin{document}

\title{Frobenius and Verschiebung for $K$-theory of endomorphisms}
\author{Sanjana Agarwal}
\address{Indiana University}
\email{sanjagar@iu.edu}
\author{Jonathan Campbell}
\address{Center for Communications Research}
\email{jonalfcam@gmail.com}
\author{Diego Manco}
\address{University of Western Ontario}
\email{dmanco@uwo.ca}
\author{Kate Ponto}
\address{University of Kentucky}
\email{kate.ponto@uky.edu}
\author{Zhonghui Sun}
\address{Michigan State University}
\email{sunzhon1@msu.edu}
\date{\today}

\begin{abstract}
We show that the Frobenius and Verschiebung maps that are fundamental to Witt vectors lift to the reduced $K$-theory of endomorphisms.  In particular, we define  Frobenius and Verschiebung maps  for the reduced $K$-theory of twisted endomorphisms of modules over non commutative rings and show they have the expected behavior after applying the \ghost map. 
\end{abstract}

\maketitle

\setcounter{tocdepth}{1}
\tableofcontents

\section{Introduction}
The Witt vectors have wide applications in mathematics and a rich  structure that can, unfortunately, feel opaque.  The ring structure is a good example of this -- standard definitions \cite{Witt1937} 
are unmotivated and byzantine.  In \cite{ALM1} Almkvist shows that the structure of the Witt vectors is essentially a consequence of linear algebra, in a way made precise below. This paper is concerned with an exploration of this fact for non-commutative rings.

Stated somewhat grandiosely, linear algebra is the study of additive invariants on the category of modules over a commutative ring. Algebraic $K$-theory, by its very definition, represents a universal source for such invariants. Thus, all invariants of interest to linear algebra over a commutative ring $A$ have representatives in the group $\widetilde{K}_0 (\End (A))$,  whose exact definition\footnote{pun intended} we defer until the next section.

Linear algebra is not solely concerned with additive structure; the category of modules has a tensor product operation which induces a ring structure on $\widetilde{K}_0(\End (A))$. One would expect the multiplicative structure to be somewhat complicated since additive invariants and multiplicative structures seldom interact well. The key fact that Almkvist shows is that the friendly and familiar characteristic polynomial produces a ring homomorphism
\begin{equation}\label{eq:char}\mathrm{ch}\colon\widetilde{K}_0(\End (A))\to W(A).\end{equation}

Even better, $W(A)$ carries a topology and Almkvist shows that the image of the characteristic polynomial is dense in $W(A)$. Thus, identities on the dense image in $W(A)$ can first be proved in $\widetilde{K}_0 (\End (A))$, transported across the characteristic polynomial, and then shown to hold everywhere by density. In fact, philosophically, we view the $K$-theory of endomorphisms as the more fundamental object, with the Witt vectors being some sort of completion. One could then hope that all structure on the Witt vectors is actually a reflection of operations performed on the category $\End(A)$.  Indeed, Almkvist  \cite[p. 319]{ALM1} defines lifts of the Frobenius and Verschiebung maps \eqref{eq:intro:FV} to $\widetilde{K}_0(\End (A))$ using only linear algebra operations and verifies standard identities. 

With the view that the $K$-theory of endomorphisms is the primordial object, we can hope to understand the structure present in non-commutative Witt vectors, even if one does not want to define such an object directly. A careful study of additive invariants of the category of endomorphisms of modules over general rings was initiated in \cite{ponto} and \cite{Ponto_2012}, and an appropriate notion of trace was defined therein. These traces provide a substitute for the characteristic polynomial.  In this paper, we illustrate structures on the $K$-theory of endomorphisms. We define the Frobenius and Verschiebung maps, and these definitions are \emph{direct, elementary, and easily motivated}.  They are also simple enough that we can show they satisfy all the expected properties by \emph{direct computation}.

We define the Frobenius and Verschiebung maps in the twisted endomorphism category.
\begin{example}
\label{motivating example}
    For (not necessarily commutative) rings $R$ and $S$,
    $\Mod_{R,S}^c$ is the category whose objects are $R$-$S$-bimodules that are finitely generated and projective as $S$-modules.  Morphisms are bimodule homomorphisms.

    Let $M$ be an $R$-$R$-bimodule and $N$ be an $S$-$S$-bimodule. The objects of the {\bf twisted endomorphism category} $\End(R,S;M,N)$ are $R$-$S$-bimodule homomorphisms
\[f\colon  M\otimes_R P\to P\otimes_S N\]
where $P\in \Mod_{R,S}^c$.   
Morphisms $(f,P) \to (g,Q)$ are homomorphisms $\phi\colon P\to Q$ so that 
\[\begin{tikzcd}
	{M\otimes_RP} & {P\otimes_S N} \\
	{M\otimes_RQ} & {Q\otimes_S N}
	\arrow["f", from=1-1, to=1-2]
	\arrow["{\mathrm{id}\otimes \phi}"', from=1-1, to=2-1]
	\arrow["{\phi\otimes \mathrm{id}}", from=1-2, to=2-2]
	\arrow["g", from=2-1, to=2-2]
\end{tikzcd}\]
commutes. 
\end{example}

Let \(\widetilde{K}_0(R,S;M,N)\) be the reduced zeroth \(K\)-theory of \(\End(R,S;M,N)\), as defined in \cref{def:reduced_k_theory}, following the construction of \cite[p.~291]{ALM1}. Then our main results parallel those  in \cite{DKNP} for the Witt vectors.

\begin{thm}[\cref{def:frobenius_alt,prop:iterate_frobenius,thm:frobeius_ghost}]
Let  $R$ and $S$ be rings, $M$ be a flat $R$-$R$-bimodule, $N$ be an $S$-$S$-bimodule, and $n$ be a natural number.  There is a homomorphism 
\[F^n\colon \tilde{K}_0(R,S;M,N)\to \tilde{K}_0(R,S;M^{\otimes_R n},N^{\otimes_S n})\]
so that 
$F^nF^m=F^{nm}$ and  the image of $F^n(\phi)$ under the \ghost map (\cref{def:ghost})
is 
\[(\tr(F^n(\phi)), \tr(F^{2n}(\phi)), \tr(F^{3n}(\phi)), \ldots)\]
where $\tr$ is the Hattori-Stallings trace (\cref{defn:bicat_trace}).\end{thm}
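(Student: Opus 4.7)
The plan is to give a direct construction of $F^n$ as an additive functor on the twisted endomorphism category, check that it descends to reduced $K_0$, prove multiplicativity $F^n F^m = F^{nm}$ by unwinding tensor factors, and deduce the ghost formula as a corollary. On an object $(f, P)$ of $\End(R, S; M, N)$, I would define $F^n(f, P)$ to have the same underlying bimodule $P$, with structure map obtained by iterating $f$ once in each of the $n$ tensor slots:
\[
M^{\otimes n}\otimes_R P \xrightarrow{\id^{\otimes (n-1)}\otimes f} M^{\otimes (n-1)}\otimes_R P\otimes_S N \xrightarrow{\id^{\otimes (n-2)}\otimes f\otimes \id} \cdots \xrightarrow{f\otimes\id^{\otimes (n-1)}} P\otimes_S N^{\otimes n}.
\]
Flatness of $M$ guarantees that each intermediate tensor product is computed on the nose. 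On a morphism $\phi\colon(f,P)\to(g,Q)$ I would set $F^n(\phi) = \phi$; the required commuting square follows by tensoring the original intertwining square with identities $n$ times, one time for each appearance of $f$.

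Next, I would verify additivity and descent to $\tilde{K}_0$. Because $M^{\otimes_R n}\otimes_R-$ preserves direct sums (using flatness), applying $F^n$ to $(f\oplus g, P\oplus Q)$ produces a block-diagonal iterated composition that splits canonically as $F^n(f,P)\oplus F^n(g,Q)$. Moreover $F^n$ sends the zero endomorphism on $P$ to the zero endomorphism on $P$ (with twisted source and target), so it carries the reduction subgroup to itself and induces a well-defined group homomorphism on $\tilde{K}_0$.

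The central step is compositionality, $F^m\circ F^n = F^{nm}$. On objects, $F^m$ applied to $F^n(f)$ is by definition an $m$-fold iteration of $F^n(f)$, and each single application of $F^n(f)$ is already an $n$-fold iteration of $f$. Rewriting $(M^{\otimes n})^{\otimes m}\cong M^{\otimes nm}$ and $(N^{\otimes n})^{\otimes m}\cong N^{\otimes nm}$ via the canonical associators, the two composable strings of $nm$ copies of $f$ line up, giving $F^m(F^n(f)) = F^{nm}(f)$. This is the step I expect to be the main obstacle: not because of deep algebra, but because the tensor-slot choices in the definition of $F^n$ must be tracked precisely through a second outer iteration, and the rebracketing must be verified to be the identity on underlying maps. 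Once the indexing is fixed, the verification is mechanical.

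The ghost formula then follows immediately. By \cref{def:ghost}, the $k$-th ghost coordinate of a class $[\psi]$ is $\tr(F^k(\psi))$. Applied to $F^n(\phi)$, the $k$-th coordinate is
\[
\tr(F^k(F^n(\phi))) = \tr(F^{kn}(\phi)),
\]
using the compositionality just proved. Additivity of the Hattori--Stallings trace and its vanishing on zero endomorphisms ensure each coordinate is well-defined on $\tilde{K}_0$, so the image is exactly $(\tr(F^n\phi), \tr(F^{2n}\phi), \tr(F^{3n}\phi),\ldots)$ as claimed.
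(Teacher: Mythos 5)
Your proposal follows essentially the same route as the paper: the explicit iterated-composition formula you write down for $F^n(f)$ is exactly what the paper derives in \cref{prop:compute_frobenius} from its more structured definition (\cref{def:frobenius_alt}, which factors $F^n$ as the diagonal $f\mapsto(f,\ldots,f)$ into $\End(\mathcal{C};G,H;n,1)$ followed by the composition functor $\Gamma$). Your argument for $F^nF^m = F^{nm}$ is the content of \cref{gamma_coherence}, and your deduction of the ghost formula from that identity matches the proof of \cref{thm:frobeius_ghost}. The one thing the paper's factorization buys that your direct approach does not make visible is the $\mathbb{Z}/n$-equivariance of the image, which is needed later for the Verschiebung and the transfer but not for the three assertions in this statement.

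Two points need tightening. First, to get a well-defined homomorphism on $K_0$ of the abelian category $\End(R,S;M,N)$ you must verify that $F^n$ is \emph{exact}, not merely additive: $K_0$ here is presented by relations from short exact sequences, and respecting direct sums is strictly weaker. The fix is immediate — exactness in the endomorphism category is checked on the underlying modules (see the proof of \cref{whoknows_intro}), and $F^n$ leaves $P$ unchanged, so it preserves exact sequences on the nose — but it should be said. Second, the claim that ``flatness of $M$ guarantees that each intermediate tensor product is computed on the nose'' misattributes the role of the flatness hypothesis. The objects and maps in your composite exist for any bimodules; flatness is what makes one of the twisting functors left exact so that $\End(R,S;M,N)$ is abelian (\cref{whoknows_intro}) and $K_0$, $\widetilde{K}_0$ are defined at all. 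It is not about the formula for $F^n(f)$.
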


\begin{thm}[\cref{defn:verschiebung_alt,prop:vnvm,thm:v_ghost}] Let  $R$ and $S$ be rings, $M$ be a flat $R$-$R$-bimodule, $N$ be an $S$-$S$-bimodule, and $n$ be a natural number.  If either $M=R$ or $N=S$, there is a homomorphism 
\[V^n\colon \tilde{K}_0(R,S;M^{\otimes_R n},N^{\otimes_S n})\to \tilde{K}_0(R,S;M,N)\]
so that 
$V^nV^m=V^{nm}$ and  the image of $V^n(\phi)$ under the \ghost map 
is 
\[(\underbrace{0,\ldots, 0}_{n-1}, \tau(\tr(F^1(\phi))), \underbrace{0,\ldots, 0}_{n-1},\tau(\tr(F^{2}(\phi))), \underbrace{0,\ldots, 0}_{n-1},\ldots)),\]
where $\tau$ is the transfer (\cref{def:transfer}) taken with respect to the $\mathbb{Z}/n$ action in \eqref{Action_on_trace}.
\end{thm}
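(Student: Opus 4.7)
The plan is to define $V^n$ explicitly by a cyclic shift construction in the twisted endomorphism category, check that it is functorial and descends to a group homomorphism on $\tilde{K}_0$, then verify the composition law $V^n V^m = V^{nm}$ and the ghost-map identity in turn.

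For the construction, I focus on the case $N = S$ (the case $M = R$ is dual). Given an object $(f\colon M^{\otimes_R n}\otimes_R P\to P)$ of $\End(R,S;M^{\otimes n}, S)$, I set
\[ Q = P\oplus (M\otimes_R P)\oplus \cdots \oplus (M^{\otimes (n-1)}\otimes_R P), \]
and define $V^n(f)\colon M\otimes_R Q \to Q$ to be the ``companion-matrix'' style map whose restriction to the subsummand $M\otimes(M^{\otimes i}\otimes P)\cong M^{\otimes(i+1)}\otimes P$ of $M\otimes Q$ is the identity into the $(i+1)$-st summand of $Q$ for $0\leq i \leq n-2$, while the final restriction $M\otimes(M^{\otimes(n-1)}\otimes P)\cong M^{\otimes n}\otimes P \to P$ is $f$. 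The hypothesis $M=R$ or $N=S$ keeps all twisting on one side, so this decomposition lives inside $\Mod_{R,S}^c$. Extending $V^n$ to morphisms by the evident block-diagonal action sends commuting twist squares to commuting twist squares, and preserves the short exact sequences used to set up $\tilde{K}_0$ (which remain summand-wise split, using flatness of $M$), so $V^n$ descends to a homomorphism on $\tilde{K}_0$.

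For $V^n V^m = V^{nm}$, I would unpack both sides: iterating the cyclic construction produces an object whose summands are indexed by pairs $(i,j)$ with $0\le i<m$ and $0\le j<n$, shifted in lexicographic order, with $f$ applied when both indices wrap around. Reindexing by $i + mj$ identifies this on the nose with $V^{nm}(f)$, so the identity already holds in $\End(R,S;M,N)$ and descends to $\tilde{K}_0$.

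The hard step will be the ghost identity. I would write down $F^k(V^n(\phi))$ using the Frobenius construction established earlier and compute its Hattori--Stallings trace on the summand decomposition of $Q$. The cyclic structure of $V^n(\phi)$ means that the relevant tensor composites have nonzero ``diagonal'' components only when the total shift returns to the starting summand, i.e. exactly when $n\mid k$; for $k$ not divisible by $n$, the trace vanishes summand by summand, producing the zeros in the ghost vector. When $n\mid k$, each of the $n$ summands of $Q$ contributes a trace which is a $\mathbb{Z}/n$-translate of $\tr(F^{k/n}(\phi))$, and summing over the summands reproduces exactly the transfer $\tau$ for the $\mathbb{Z}/n$-action of \eqref{Action_on_trace}. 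The main bookkeeping obstacle is tracking how the $M^{\otimes}$-twists thread through the cyclic composition and verifying that the cyclic sum assembles into $\tau$ with the correct multiplicities rather than a symmetrization; flatness of $M$ is what ensures the Frobenius tensor powers respect the direct-sum decomposition of $Q$ so that this computation can be carried out summand by summand.
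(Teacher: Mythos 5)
Your ``companion-matrix'' construction has a genuine well-definedness problem: the auxiliary object
\[ Q = P\oplus (M\otimes_R P)\oplus \cdots \oplus (M^{\otimes (n-1)}\otimes_R P) \]
is required to be an object of $\Mod_{R,S}^c$, i.e.\ each summand $M^{\otimes i}\otimes_R P$ must be finitely generated projective as a right $S$-module. But the hypotheses only give $M$ \emph{flat}, not finitely generated, and $P$ FGP over $S$ does not imply $M\otimes_R P$ is FGP over $S$. (Already with $R=S=\mathbb{Z}$, $M=\mathbb{Q}$, $P=\mathbb{Z}$, the summand $M\otimes_R P\cong\mathbb{Q}$ is not finitely generated over $\mathbb{Z}$.) The same obstruction occurs in the dual case $M=R$, where the summands would be $P\otimes_S N^{\otimes i}$. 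So the proposed lift $(f,\id,\ldots,\id)$ of $f$ along $\Gamma$ simply does not live in $\End(\Mod_{R,S}^c;G,H;n,1)$, and your functorial formula for $V^n$ fails to land in the category. This is exactly why the paper warns, immediately after \cref{defn:verschiebung}, that it ``neither possesses nor expects an explicit description of the inverse of the isomorphism in \eqref{eq:uncompose_2}'': the explicit companion-matrix inverse is available only in special cases (notably the classical $M=N=A$, where all the $c_j$ reduce to $P$), and one cannot base the general definition on it.

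The paper avoids this by defining $V^n$ purely at the level of $K$-theory as the composite
\[
\widetilde{K}_0(\mathcal{C};G,H;1,n) \xleftarrow[\;\cong\;]{\eqref{eq:uncompose_2}} \widetilde{K}_0(\mathcal{C};G,H;n,1) \xrightarrow{\eqref{eq:summing_map_2}} \widetilde{K}_0(\mathcal{C};G,H;1,1),
\]
where the left arrow is shown to be an isomorphism (\cref{lem:uncompose_specific}, \cref{lem:composition_k_theory}) via Quillen's localization theorem: the ``constant'' objects form a Serre subcategory and the induced Serre quotient functor is an equivalence. The summing functor \eqref{eq:summing_map} only ever takes a direct sum of objects already in $\Mod_{R,S}^c$, so no FGP issue arises. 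The subsequent identities $V^nV^m=V^{nm}$ (\cref{prop:vnvm}) and the ghost-map computation (\cref{FnVd}, \cref{thm:v_ghost}) are then proved by chasing diagrams involving $\Gamma$, the summing functor, and the cyclic $\mathbb{Z}/n$-action on $\widetilde{K}_0(\mathcal{C};G,H;n,1)$ — arguments that never need an explicit preimage of $f$. Your sketch of the $V^nV^m$ reindexing and the summand-by-summand trace computation could in principle be made to work, but they rest on the flawed explicit construction, and the role you assign to flatness of $M$ (ensuring tensor powers respect the direct-sum decomposition) is not actually what flatness is doing here — additivity of the tensor product is automatic; flatness enters in ensuring the endomorphism category is abelian so that its $K$-theory is defined.
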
 

We emphasize that the proofs of these theorems depend heavily on beautiful structural properties of algebraic $K$-theory. In particular, the reduced algebraic $K$-theory of endomorphisms is a shadow in the sense of \cite{ponto}. This fact was also observed in \cite{gepner} where a proof sketch is given. In Appendix A we give a full proof in our more quotidian setting so that the paper is reasonably self-contained. Once $K$-theory is realized as a shadow, it inherits a great deal of structure, and visibly cyclic structures, which are important to our proofs.

We draw significant inspiration (though no direct results) from previous work on the Witt vectors. The papers \cite{ALM1,
GRAYSON1,GRAYSON2,DRESS19891} are primary among them. Notions of equivariance have shown up before in papers on Witt vectors, notably in \cite{DRESS198887}. 
There, the authors generalize the work of Witt and Cartier to define $G$-typical Witt vectors for any profinite group $G$.  Hesselholt  \cite{Hesselholt1997WittVO} defines $p$-typical Witt vectors for non-commutative rings and Krause and Nikolaus \cite{Lectures} define big Witt vectors for non-commutative rings.  

Most directly relevant to this paper,  in \cite{DKNP} the authors define big Witt vectors $W(A;M)$ for a non-commutative ring $A$ with coefficients in an $A$-bimodule $M$. In this setting,  $W(A;M)$ is an abelian group, with ring multiplication replaced by an external product 
\[W(R;M)\otimes W(S;N)\to W(R\otimes S; M\otimes N).\] Further, there are Frobenius and Verschiebung maps 
\begin{equation}\label{eq:intro:FV}F^n: W(R;M) \to W(R;M^{\otimes n}), \ \ \ V^n: W(R;M^{\otimes n}) \to W(R;M)\end{equation}
satisfying identities (\cref{structure on End}\eqref{structure on End_id}), whose images under the ghost map generalize the classical case.

The current paper takes a somewhat orthogonal approach to Witt vector constructions. One could either view Witt vectors as a fundamental object or the $K$-theory of endomorphisms as a fundamental object, with Witt vectors appearing through some completion process. In this paper, we take the latter view, and thus study the structure of the $K$-theory of endomorphisms for its own sake.

\subsection*{Outline}
In \cref{sec:com_ring} we recall the construction of the Frobenius and Verschiebung operations on $\tilde{K}_0(\End(A))$ for a commutative ring $A$. In \cref{sec:K-end} we define an endomorphism category generalizing \cref{motivating example} and an isomorphism essential to our results.  
The proofs in this section use techniques different from  those in  the rest of the paper, so we postpone them to \cref{appendix}.  In \cref{sec:new_maps_frobenius} we define the Frobenius map on the reduced $K$-theory of endomorphisms, and \cref{sec:new_maps_verscheibung} is the corresponding section for the Verschiebung map. 
\cref{sec:define_ghost} computes the result of applying the \ghost map to the Frobenius and Verschiebung.  In \cref{ap:add} we give an explicit proof of additivity of the trace in the context of \cref{motivating example}.  This is used in  \cref{sec:define_ghost} to show the trace descends to reduced $K$-theory.

\subsection*{Acknowledgments}
This project was part of the Collaborative Workshop held at Vanderbilt University in Summer 2024 and supported by the NSF Focused Research Group \emph{Collaborative Research: Trace Methods and Applications for Cut-and-Paste K-Theory}.  The authors were partially supported by NSF grants 
DMS-2052905 
and 
DMS-2052977. 
Jonathan Campbell thanks the Center for Communications Research -- La Jolla for its support.

\section{The case of a commutative ring}\label{sec:com_ring}

In this section, we recall the constructions in \cite{ALM3,facets}  of 
the Frobenius and Verschiebung maps on $\widetilde{K}_0(\End(A))$
for a commutative ring $A$, which induce the corresponding operations on $W(A)$.

\begin{example}
\label{classical_End(A)}
    For a commutative ring $A$, let $\Mod_A^c$ denote the category whose objects are finitely generated projective $A$-modules and morphisms are $A$-module maps.
 Let $\End(A)$ 
be the special case of \cref{motivating example} where $A=R=S=M=N$.
\end{example}

\begin{thm}\label{structure on End}\cite[p. 319]{ALM1}
    The category $\End(A)$ has the following structure:
\begin{enumerate}
    \item Two monoidal structures, one given by direct sum of objects and morphisms and the other given by tensor product. 
    \item\label{structure on End_F} Frobenius functors for each natural number $i$,
    \begin{align*}
    F^i: \End(A) & \to \End(A)\\
    (f,P) & \mapsto (f^{\circ i},P).
\end{align*}

\item\label{structure on End_V} Verschiebung functors for each natural number $i$,
    \begin{align*}
    V^i: \End(A) & \to \End(A)\\
    (f,P) & \mapsto (V^i(f),P^{\oplus i})
    \end{align*}
    where $V^i(f)$ is the composite 
$$
(p_0,p_1,\ldots, p_{n-1}) \xmapsto{t} (p_1, p_2, \ldots, p_{n-1}, p_0 )\xmapsto{f\oplus 1\oplus 1\cdots\oplus 1} (f(p_1), p_2, \ldots, p_{n-1}, p_0 ).$$

\item\label{structure on End_id} Identities that relate the Frobenius and Verschiebung including $$F^nF^m=F^{nm}, \ V^nV^m=V^{nm}, \ \text{and} \ F^nV^n = (-)^{\oplus n}.$$
\end{enumerate}
\end{thm}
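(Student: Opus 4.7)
The plan is to verify each of the four claims of \cref{structure on End} in turn, noting that items (1)--(3) are functoriality assertions about explicit constructions while item (4) records the algebraic identities these functors satisfy. For item (1), the two monoidal structures come from $(f,P) \oplus (g,Q) := (f \oplus g, P \oplus Q)$ with unit $(0,0)$, and $(f,P) \otimes (g,Q) := (f \otimes_A g, P \otimes_A Q)$ with unit $(\id_A, A)$. Associators, unitors, and morphism compatibility are inherited from the corresponding structures on $\Mod_A^c$, so no new coherence data need be checked beyond verifying that structure maps are morphisms in $\End(A)$, which is immediate from naturality. For item (2), functoriality of $F^i$ reduces to the observation that a morphism $\phi : (f,P) \to (g,Q)$ satisfies $g \phi = \phi f$, whence $g^{\circ i} \phi = \phi f^{\circ i}$ by induction, so the same underlying $\phi$ is a morphism $(f^{\circ i},P) \to (g^{\circ i},Q)$.

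For item (3), I would verify that $V^i$ is well-defined on morphisms by a direct diagram chase: given $\phi : (f,P) \to (g,Q)$, the map $\phi^{\oplus i} : P^{\oplus i} \to Q^{\oplus i}$ commutes with the cyclic shift $t$ (naturality of a direct sum under a common map) and commutes with $f \oplus \id \oplus \cdots \oplus \id$ (using $g\phi = \phi f$ on the first coordinate and trivially on the others). Composing these intertwiners gives $V^i(g) \circ \phi^{\oplus i} = \phi^{\oplus i} \circ V^i(f)$, which is exactly the condition for $\phi^{\oplus i}$ to be a morphism in $\End(A)$.

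For item (4), the identity $F^n F^m = F^{nm}$ is immediate from $(f^{\circ m})^{\circ n} = f^{\circ nm}$. The identity $F^n V^n = (-)^{\oplus n}$ is a direct computation: iterating $V^n(f) = (f \oplus \id \oplus \cdots \oplus \id) \circ t$ a total of $n$ times, the cyclic shift $t^{\circ n}$ returns to the identity, and each summand receives exactly one application of $f$, yielding $f^{\oplus n}$ as endomorphism of $P^{\oplus n}$. The main obstacle is $V^n V^m = V^{nm}$: after identifying $(P^{\oplus m})^{\oplus n}$ with $P^{\oplus nm}$ via a canonical ordering of indices, one must track how the two nested cyclic shifts combine into a single $nm$-cycle. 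The cleanest approach is to index summands by pairs $(a,b) \in \{0,\dots,n-1\} \times \{0,\dots,m-1\}$, compute the action of $V^n V^m(f)$ coordinate by coordinate in terms of $f$ and the two shifts, and verify that the resulting pattern matches $V^{nm}(f)$ on $P^{\oplus nm}$ under the lexicographic bijection $(a,b) \leftrightarrow am + b$ (or its reverse, depending on shift conventions). This is a bookkeeping exercise, but picking the correct identification at the outset is where the only real care is required; once made, the identity is a transparent permutation computation that one could equivalently package as a natural isomorphism of functors.
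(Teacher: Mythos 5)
The paper states this theorem as a recalled result, citing [ALM1, p.~319], and supplies no proof of its own --- only the illustrative figures (\cref{fig:classical}, \cref{fig:overall:v3v2}, \cref{eg:fnvn}) and remarks pointing to the identities. Your proposal is therefore not competing with a proof in the paper; it is a direct verification, and your reasoning for items (1), (2), and the first and third identities in item (4) is sound. For $F^nV^n = (-)^{\oplus n}$, your ``each summand receives exactly one application of $f$'' can be made rigorous by noting that position $j$ of $(V^n(f))^{\circ k}(p)$ is $f^{e}(p_{j+k})$ where $e$ counts the number of indices in $\{j,\dots,j+k-1\}$ congruent to $0$ mod $n$; at $k=n$ every residue occurs exactly once, giving $f^{\oplus n}$.

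The one place you are imprecise is the bijection in the $V^nV^m = V^{nm}$ argument. Tracing the two shifts with the paper's convention for $t$, the identification $(P^{\oplus m})^{\oplus n} \cong P^{\oplus nm}$ that makes the two functors agree on the nose is the map sending the $(a,b)$-th summand (with $a$ read in $\{1,\dots,n\}$, $b$ in $\{1,\dots,m\}$, so that $0$ is replaced by $n$ resp.\ $m$) to position $(b-1)n + a$ reduced mod $nm$ --- a shifted column-major enumeration, not the plain lexicographic $(a,b)\mapsto am+b$ that you name as the primary candidate. You do hedge with ``or its reverse, depending on shift conventions,'' which is where the answer lives, but the reader would need to carry out the computation you only gesture at. It is worth noting that the paper's own treatment of $V^nV^m=V^{nm}$ (in the generalized setting of \cref{prop:vnvm}) avoids this bookkeeping entirely by factoring the comparison through the maps $\alpha$ and $\beta$ of \cref{VnVm_square} and invoking coherence for symmetric monoidal functors; that structural route is what makes the twisted, noncommutative version tractable, and it specializes to give a cleaner proof of the classical identity you are verifying here.
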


The identities in \cref{structure on End}\eqref{structure on End_id} are suggested by the pictures for the Frobenius and Verschiebung in \cref{fig:classical}.  
(We include the intermediate step in \cref{fig:classical_frob} since the symmetry made explicit there  will be important in \S\ref{sec:new_maps_frobenius}.)  See \cref{fig:overall:v3v2} for an example of $V^nV^m=V^{nm}$ and \cref{eg:fnvn} for $F^nV^n=(-)^{\oplus n}$.

\begin{figure}
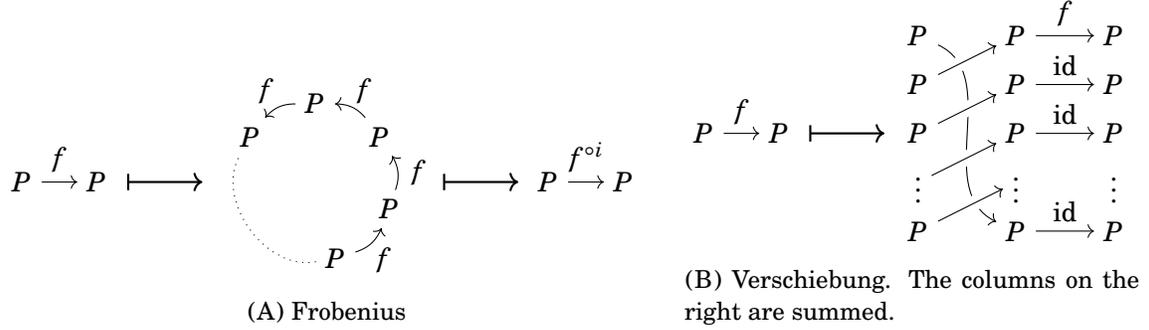

     \begin{subfigure}[b]{0.57\textwidth}
\tikz{
\tikzmath{\x1=55;}
\tikzmath{\x2=.35;}

\node (fc0) at (90:3*\x2) {$P$};
\node (fc1) at (90-1*\x1:3*\x2) {$P$};
\node (fc2) at (90-2*\x1:3*\x2) {$P$};
\node (fc3) at (90-3*\x1:3*\x2) {$P$};
\node (fcn1) at ({90+\x1}:3*\x2) {$P$};

\draw [<-] (fc0) edge [bend left=25] node[midway, above right=-3pt] {$f$} (fc1);
\draw [<-] (fc1) edge [bend left=25] node[midway, right=1pt] {$f$} (fc2);
\draw [<-] (fc2) edge [bend left=25] node[midway, below right=-3pt] {$f$} (fc3);
\draw [<-] (fcn1) edge [bend left=25] node[midway, above left=-3pt] {$f$} (fc0);
\draw [dotted] (fc3) edge [bend left=65] (fcn1);

\node (d1) at (-3.9,0){$P$};
\node (d2) at (-2.9,0){$P$};
\draw [->](d1) edge node [midway, above]{$f$} (d2);

\draw [|->, thick ] (-2.5,0) -- 
(-1.5,0);

\node (t1) at (3.1,0){$P$};
\node (t2) at (4.1,0){$P$};
\draw [->](t1) edge node [midway, above]{$f^{\circ i}$} (t2);

\draw [|->, thick] (1.7,0) -- 
(2.7,0);
}
\caption{Frobenius}
\label{fig:classical_frob}
\end{subfigure}
\hfill
     \begin{subfigure}[b]{0.4\textwidth}
\tikz{
\tikzmath{\x1=.65;}
\tikzmath{\x2=.65;}

\node (fc0) at (-1*\x1, 2*\x2) {$P$};
\node (fc1) at (-1*\x1,1*\x2) {$P$};
\node (fc2) at (-1*\x1, 0*\x2) {$P$};
\node (fc3) at (-1*\x1, -1*\x2) {$\vdots$};
\node (fcn1) at (-1*\x1, -2*\x2) {$P$};

\node (gc0) at (1*\x1, 2*\x2) {$P$};
\node (gc1) at (1*\x1,1*\x2) {$P$};
\node (gc2) at (1*\x1, 0*\x2) {$P$};
\node (gc3) at (1*\x1, -1*\x2) {$\vdots$};
\node (gcn1) at (1*\x1, -2*\x2) {$P$};

\node (hc0) at (3*\x1, 2*\x2) {$P$};
\node (hc1) at (3*\x1,1*\x2) {$P$};
\node (hc2) at (3*\x1, 0*\x2) {$P$};
\node (hc3) at (3*\x1, -1*\x2) {$\vdots$};
\node (hcn1) at (3*\x1, -2*\x2) {$P$};

\draw [->] (fc0)
edge [out=-25,in=155]  node[midway, above] {} (gcn1) ;

\draw [-, line width=2mm, white] (fc1)
edge  (gc0) ;

\draw [->] (fc1)
edge  
(gc0) ;

\draw [->] (gc0)
edge  node[midway, above] {$f$} (hc0) ;

\draw [-, line width=2mm,white] (fc2)
edge  node[midway, above ] {} (gc1) ;

\draw [->] (fc2)
edge  
(gc1) ;

\draw [->] (gc1)
edge  node[midway, above] {$\id$} (hc1) ;

\draw [-, line width=2mm, white] (fc3)
edge  (gc2) ;

\draw [->] (fc3)
edge 
(gc2) ;

\draw [->] (gc2)
edge  node[midway, above] {$\id$} (hc2) ;

\draw [-, line width=2mm, white] (fcn1)
edge (gc3) ;

\draw [->] (fcn1)
edge 
(gc3) ;

\draw [->] (gcn1)
edge  node[midway, above] {$\id$} (hcn1) ;

\node (d1) at (-3.5,0){$P$};
\node (d2) at (-2.5,0){$P$};
\draw [->](d1) edge node [midway, above]{$f$} (d2);

\draw [|->, thick ] (-2.1,0) -- (-1.1,0);

}
\caption{Verschiebung.  The columns on the right are summed.}\label{fig:classical_versh}
\end{subfigure}
\caption{Functors in $\End(A)$}\label{fig:classical}
\end{figure}

A sequence 
\[(f',P')\xrightarrow{\phi}(f,P)\xrightarrow{\psi} (f'',P'')\] in 
$\End(A)$ is {\bf exact} if
$P'\xrightarrow{\phi}P\xrightarrow{\psi}P''$
is exact in $\Mod_A^c$. 
The map 
\begin{align*}
    \Mod_A^c & \to \End(A)\\
    P & \mapsto (0:P\to P,P)
\end{align*}
is exact and the cokernel of the induced map on the 0th algebraic $K$-theory \cite{Q72}
\[K_0(\Mod_A^c)\to K_0(\End(A))\]
is denoted  $\widetilde{K}_0(\End(A))$ and  called the {\bf reduced $K$-theory} of $\End(A)$.
Thus,  for $A$ commutative, \cref{structure on End} implies $\widetilde{K}_{0}(\End(A))$ is a ring with group endomorphisms $F^i, V^i$.

\begin{remark*}
We use $F^i$ and $V^i$ for both the functor on the endomorphism category and the induced homomorphism on $K$-theory.  Similarly, we use the same notation to denote the functor in \eqref{eq:K_0_compose_1} and the homomorphism induced on $K$-theory.  
\end{remark*}

In addition to the characteristic polynomial, 
$\mathrm{ch}: \widetilde{K}_{0}(\End(A))\to W(A)$,  
there is an iterated trace homomorphism 
\begin{equation} \label{classical_ghost}
    \mathrm{It Tr}: \widetilde{K}_{0}(\End(A)) \to \Pi^{\infty}A
\end{equation}
induced by a map from $\End(A)$ by sending 
$(f,P)$ to $(\tr(f), \tr(f^2), \tr(f^3), \cdots)$ where $\tr$ is the Hattori-Stallings trace.

\begin{thm} [See \cite{DRESS19891}]
  The map $ch: \widetilde{K}_{0}(\End(A))\to W(A)$ is a ring map that respects the Frobenius and Verschiebung. 
  Further, it factors through the homomorphism in 
  \eqref{classical_ghost}.
\[\begin{tikzcd}
	{\tilde{K}_0 (\mathrm{End}(A)) } && {W(A)} \\
	& {\prod^\infty A}
	\arrow["{\mathrm{ch}}", from=1-1, to=1-3]
	\arrow["{\mathrm{ItTr}}"', from=1-1, to=2-2]
	\arrow[from=2-2, to=1-3]
\end{tikzcd}\]
\end{thm}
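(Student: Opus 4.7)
The plan is to establish the factorization identity
\[\mathrm{gh}\circ\mathrm{ch} = \mathrm{ItTr}\]
as the core technical input, and then deduce the remaining claims (ring homomorphism, Frobenius/Verschiebung compatibility) by reducing to ghost components in a torsion-free universal case. The central computation is the classical logarithmic derivative identity
\[-t\frac{d}{dt}\log\det(1-tf) = \sum_{n\geq 1}\tr(f^n)\,t^n,\]
which, combined with the description of the ghost map on $W(A)=1+tA[[t]]$ via the same logarithmic derivative, identifies the $n$th ghost coordinate of $\det(1-tf)$ as the Hattori--Stallings trace $\tr(f^n)$.

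First I would reduce to the free module case by the standard projector trick: choose a finitely generated free module $F$ and a projector $e\colon F\to F$ with image $P$, so that $f$ extends to $\tilde f = efe\colon F\to F$; both $\det(1-t\tilde f)$ and $\tr(\tilde f^n)$ are independent of the choice of $F$ and $e$ and recover the projective-module invariants. I would then check that both sides are additive on short exact sequences in $\End(A)$, using the block upper triangular form of a compatible lift, so that $\mathrm{ch}$ and $\mathrm{ItTr}$ are well defined on $K_0(\End(A))$. The image of $K_0(\Mod_A^c)$ consists of objects $(0,P)$, which are sent to the power series $1$ (the additive unit of $W(A)$) and to the zero sequence respectively, so both maps descend to $\widetilde{K}_0(\End(A))$.

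For the ring structure and the $F^n,V^n$ compatibilities I would argue by universal example. Each identity in $W(A)$ being asserted can be formulated universally over a polynomial ring $A_0 = \mathbb Z[x_{ij},y_{ij},\ldots]$ encoding generic matrix entries, and then pushed along the natural map $A_0\to A$ by the functoriality of $\widetilde{K}_0(\End(-))$, $W$, and $\prod^\infty$. Since $A_0$ is torsion-free, $\mathrm{gh}\colon W(A_0)\to\prod^\infty A_0$ is an injective ring homomorphism, so it suffices to verify the identities on ghost coordinates, where they collapse to elementary computations: additivity becomes $\tr((f\oplus g)^n)=\tr(f^n)+\tr(g^n)$, multiplicativity becomes $\tr((f\otimes g)^n)=\tr(f^n)\tr(g^n)$, Frobenius compatibility is the tautology $\tr((f^n)^k)=\tr(f^{nk})$, and Verschiebung compatibility uses that $(V^n(f))^k$ has vanishing trace unless $n\mid k$, in which case its trace is $n\tr(f^{k/n})$, matching the formula for $V^n$ on $\prod^\infty A$.

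The main obstacle I expect is the first step: verifying the logarithmic derivative identity together with additivity of $\det(1-tf)$ on short exact sequences of endomorphisms of projectives, where non-split sequences must be handled by choosing compatible free lifts and computing with the resulting block triangular structure. Once that groundwork is laid, the factorization $\mathrm{gh}\circ\mathrm{ch}=\mathrm{ItTr}$ and the universal-case argument combine to give all the stated structural properties with no further obstruction.
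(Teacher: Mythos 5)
The paper does not actually prove this theorem: it is stated with a citation to Dress--Siebeneicher and no argument is supplied. Your proposal therefore fills a gap rather than shadowing an existing proof, and what you give is the standard classical argument, correctly organized. The logarithmic-derivative identity $-t\frac{d}{dt}\log\det(1-tf)=\sum_{n\ge 1}\tr(f^n)t^n$ is exactly the statement that the ghost components of the characteristic polynomial are the iterated Hattori--Stallings traces, and the reduction via generic matrix entries over $\mathbb{Z}[x_{ij},\dots]$ together with injectivity of the ghost map over a torsion-free base is the usual way to deduce the ring and $F^n,V^n$ compatibilities on $W(A)$ from the (trivial) corresponding identities on ghost coordinates. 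Your Verschiebung computation --- $(V^nf)^k$ is a block permutation with no fixed summands when $n\nmid k$, and equals $\bigoplus^n f^{k/n}$ when $n\mid k$ --- is the right one.

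Two small remarks. First, in the additivity step you speak of handling non-split sequences by choosing compatible free lifts, but this is an unnecessary complication: because the quotient term in a short exact sequence in $\End(A)$ is finitely generated projective, the sequence always splits \emph{as a sequence of $A$-modules} (just not as a sequence of endomorphisms), and the chosen module splitting already puts $f$ in block upper-triangular form with diagonal blocks $f'$ and $f''$. You can therefore carry out the block-determinant and block-trace computations directly on the projectives without ever passing to free lifts. Second, you establish $\mathrm{gh}\circ\mathrm{ch}=\mathrm{ItTr}$ with $\mathrm{gh}\colon W(A)\to\prod^\infty A$ the ghost map; note that the unlabeled arrow in the paper's diagram is drawn from $\prod^\infty A$ to $W(A)$, which is the opposite direction from the ghost map and does not exist as a natural map in general. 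The mathematically correct triangle is the one you prove, with $W(A)\to\prod^\infty A$, and $\mathrm{ItTr}$ factoring through $\mathrm{ch}$ rather than the other way around.
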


\section{$K$-theory of endomorphisms}\label{sec:K-end}

In this section we establish essential facts about a generalization of the category of endomorphisms in  \cref{motivating example}.  This is the context where we will 
define the Frobenius and Verschiebung in \cref{sec:new_maps_frobenius,sec:new_maps_verscheibung}.
We postpone the proofs of some results in this section to 
\cref{appendix}
since the techniques  used are very different from those we use in the rest of the paper.

For a category $\mathcal{C}$,
 functors $G,H\colon \mathcal{C}_{}\to \mathcal{C}$,
and a vector $\vec{i}=(i_0,\ldots, i_{n-1})$ of natural numbers, 
the objects of the category
$\Mor (\mathcal{C};G,H;\vec{i})$\label{def:mor} are
\begin{itemize}
    \item  tuples $(c_0,c_1,\dots,c_n)$ of objects in $\mathcal{C}$ and  morphisms $\left(f_j\colon G^{i_j}(c_{j+1})\to H^{i_j}(c_j)\right)_{j=0}^{n-1}.$ 
    \end{itemize}
    The morphisms 
\begin{itemize}
\item  $h\colon\left(f_j\colon G^{i_j}(c_{j+1})\to H^{i_j}(c_j)\right)_{j=0}^{n-1}\to \left(g_j\colon G^{i_j}(d_{j+1})\to H^{i_j}(d_j)\right)_{j=0}^{n-1}$ are tuples $(h_j:c_j\to d_j)_{j=0}^{n}$
so  the square
\[\begin{tikzcd}
	{G^{i_j}(c_{j+1})} & {H^{i_j}(c_{j})} \\
	{G^{i_j}(d_{j+1})} & { H^{i_j}(d_{j})}
	\arrow["{f_{j}}", from=1-1, to=1-2]
	\arrow["{G^{i_j}(h_{j+1})}"', from=1-1, to=2-1]
	\arrow["{H^{i_j}(h_{j})}", from=1-2, to=2-2]
	\arrow["{g_{j}}"', from=2-1, to=2-2]
\end{tikzcd}\]
    commutes for $j=0,1,\dots, n-1$.
    \end{itemize}

    We usually omit the tuple of objects of $\mathcal{C}$ and write an object of $\Mor(\mathcal{C};G,H;\vec{i})$ as \[\left(f_j\colon G^{i_j}(c_{j+1})\to H^{i_j}(c_j)\right)_{j=0}^{n-1}\quad\text{or} \quad(f_0,f_1,\dots, f_{n-1}).\]

Let \[\End(\mathcal{C};G,H;\vec{i})\] 
be the subcategory 
of 
$\Mor(\mathcal{C};G,H;\vec{i})$ where $c_0=c_n$ for objects and $h_0=h_n$ for morphisms.
We visualize the objects of $\End(\mathcal{C};G,H;\vec{i})$ as in \cref{fig:obj_of_end}.  (Compare to \cref{fig:classical_frob}.)
\begin{figure}
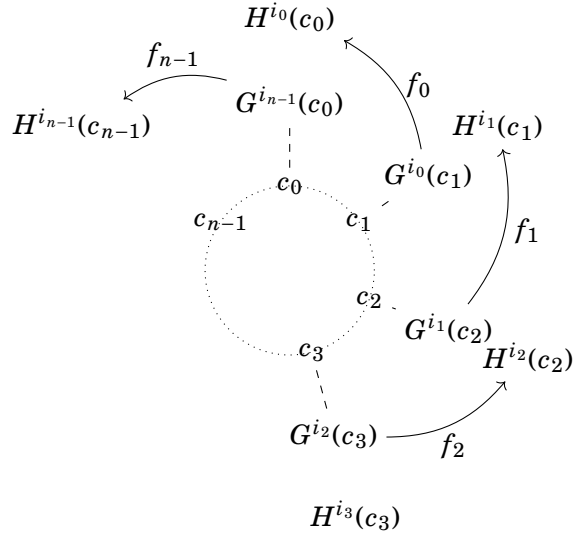

\tikz{
\tikzmath{\x1=55;}
\tikzmath{\x2=.75;}
\draw [dotted] (0,0) circle (1.5*\x2);

\node (cn1) at ({90+\x1}:1.5*\x2) {$c_{n-1}$};
\node (c0) at (90:1.5*\x2) {$c_0$};
\node (c1) at (90-\x1:1.5*\x2) {$c_1$};
\node (c2) at (90-2*\x1:1.5*\x2) {$c_2$};
\node (c3) at (90-3*\x1:1.5*\x2) {$c_3$};
\node (fc0) at (90:3*\x2) {$G^{i_{n-1}}(c_0)$};
\node (fc1) at (90-1*\x1:3*\x2) {$G^{i_0}(c_1)$};
\node (fc2) at (90-2*\x1:3*\x2) {$G^{i_1}(c_2)$};
\node (fc3) at (90-3*\x1:3*\x2) {$G^{i_2}(c_3)$};

\node (hcn1) at ({90+\x1}:4.5*\x2) {$H_{}^{i_{n-1}}(c_{n-1})$};
\node (hc0) at (90:4.5*\x2) {$H^{i_0}(c_0)$};
\node (hc1) at (90-1*\x1:4.5*\x2) {$H^{i_1}(c_1)$};
\node (hc2) at (90-2*\x1:4.5*\x2) {$H^{i_2}(c_2)$};
\node (hc3) at (90-3*\x1:4.5*\x2) {$H^{i_3}(c_3)$};

\draw [<-] (hc0)
edge [bend left=25] node[midway, right] {$f_0$} (fc1) ;
\draw  (c0)
edge [dashed] (fc0);
\draw [<-] (hc1)
edge [bend left=25] node[midway, right] {$f_1$} (fc2) ;
\draw  (c1)
edge [dashed] (fc1);
\draw [<-] (hc2)
edge [bend left=25] node[midway,  below] {$f_2$} (fc3) ;
\draw  (c2)
edge [dashed] (fc2);
\draw  (c3)
edge [dashed] (fc3);
\draw [<-] (hcn1)
edge [bend left=25] node[midway, above] {$f_{n-1}$} (fc0) ;

}
\caption{An object of $\End(\mathcal{C};G,H;\vec{i})$}\label{fig:obj_of_end}
\end{figure}

\begin{example}\label{def;end_examples} 
    The two examples above motivate these definitions.
    \begin{enumerate}
        \item\label{def:end_single_ring}  For a commutative ring $A$, we have $\End(A)$ as in \cref{classical_End(A)}. Then 
        \[\End(A) = \End(\Mod_A^c;\id,\id;(1)).\]  
        
        \item \label{def;end_modules}
        Let $R$ and $S$ be (possibly noncommutative) rings.  
        An  $R$-$R$-bimodule
$M$ and $S$-$S$-bimodule $N$ define functors  $G\coloneqq M\otimes_R -$, and $H\coloneqq -\otimes_S N$.   
Then $\End(R,S;M;N)$ from  \cref{motivating example} 
is
 \[\End(\Mod_{R,S}^c;M\otimes_R -, -\otimes_S N;(1)).\]
 \end{enumerate}
\end{example}

\begin{remark*}
    There are many possible generalizations of $\End(\mathcal{C};G,H;\vec{i})$.  For example, we could allow for different vectors of exponents on the $G$'s and $H$'s or we could allow for vectors $G_0,\ldots , G_{n-1}$ and similarly for $H$. 

     In the interest of not further over-complicating notation, we have opted for a generality that better matches the focus in the later sections. 
\end{remark*}

With some additional assumptions, the endomorphism category acquires more structure.
\begin{restatable}{lemma}{whoknowsintro}
\label{whoknows_intro}
    Let $\mathcal{C}$ be an abelian category with $G\colon\mathcal{C}\to\mathcal{C}$  right exact and $H\colon \mathcal{C}\to \mathcal{C}$ left exact. Then 
    $\End(\mathcal{C};G,H;\vec{i})$ 
    is an abelian category.
\end{restatable}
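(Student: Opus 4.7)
The plan is to verify the abelian category axioms for $\End(\mathcal{C};G,H;\vec{i})$ by constructing all universal objects componentwise in $\mathcal{C}$ and using the exactness hypotheses on $G$ and $H$ to ensure the required structure maps exist and interact correctly. The tuple of zero objects with vanishing structure maps is a zero object; morphism sets are abelian groups under componentwise addition; and finite biproducts are formed componentwise, using that any right or left exact functor between abelian categories is additive, so $G^{i_j}$ and $H^{i_j}$ commute with finite direct sums.

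For a morphism $h\colon(f_\bullet)\to(g_\bullet)$ with components $h_j\colon c_j\to d_j$, I would take the kernel to have components $k_j:=\ker h_j$ with inclusions $\iota_j\colon k_j\hookrightarrow c_j$. Left exactness of $H$ identifies $H^{i_j}(k_j)$ with $\ker H^{i_j}(h_j)$, and the composite $f_j\circ G^{i_j}(\iota_{j+1})\colon G^{i_j}(k_{j+1})\to H^{i_j}(c_j)$ is annihilated by $H^{i_j}(h_j)$ because, by the defining commuting square of $h$, it equals $g_j\circ G^{i_j}(h_{j+1}\iota_{j+1})=0$. Thus $f_j\circ G^{i_j}(\iota_{j+1})$ lifts uniquely through the monomorphism $H^{i_j}(\iota_j)$, producing the required structure map; the universal property in $\End(\mathcal{C};G,H;\vec{i})$ then follows from the universal property componentwise. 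Cokernels are constructed dually, this time using right exactness of $G$ to identify $G^{i_j}(\coker h_{j+1})$ with $\coker G^{i_j}(h_{j+1})$ and to induce structure maps from the universal property of the cokernel in $\mathcal{C}$.

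Finally, for the axiom that the canonical map $\mathrm{coim}(h)\to\mathrm{im}(h)$ is an isomorphism, one reduces to the componentwise statement, which holds because $\mathcal{C}$ is abelian. The canonical comparison is a morphism in $\End(\mathcal{C};G,H;\vec{i})$ by the universal property of the coimage factorization of $h$ through the subobject $\mathrm{im}(h)$ of the target; its componentwise inverse is automatically a morphism in $\End(\mathcal{C};G,H;\vec{i})$ as well, since inverting a commuting square componentwise gives another commuting square. The main obstacle is bookkeeping: left exactness of $H$ governs kernels and images, while right exactness of $G$ governs cokernels and coimages, and these hypotheses enter on opposite sides of every structure map $G^{i_j}(-)\to H^{i_j}(-)$. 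The image-equals-coimage axiom invokes both simultaneously, so one has to verify carefully that the two componentwise constructions yield genuinely compatible structure maps, and that no asymmetry between $G$ and $H$ obstructs the comparison.
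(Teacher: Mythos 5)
Your proof is correct and follows essentially the same approach as the paper: construct the zero object, biproducts, kernels, and cokernels componentwise, using left exactness of $H$ to push structure maps into $H^{i_j}(\ker h_j)\cong\ker H^{i_j}(h_j)$ and right exactness of $G$ dually for cokernels. You spell out the $\mathrm{coim}\to\mathrm{im}$ axiom in a bit more detail than the paper (which simply says it ``follows from the fact that $\mathcal{C}$ is'' abelian); your closing worry is resolved by observing that the forgetful functor to $\mathcal{C}^{\times n}$ preserves kernels and cokernels (hence the comparison map) and reflects isomorphisms, so no compatibility of structure maps needs separate verification.
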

See \cref{appendix} for the proof of \cref{whoknows_intro}.

\begin{remark*}This lemma implies no additional conditions on \cref{def;end_examples}\eqref{def:end_single_ring}, but in \cref{def;end_examples}\eqref{def;end_modules},
 the functor \(H = - \otimes_S N\) is left exact only if $N$ is flat. 
This condition is restrictive, but it ensures  \(\End(\mathcal{C}; G, H; \vec{i})\) forms an abelian category. 
\end{remark*}

For 
an abelian  category $\mathcal{C}$, the product $\mathcal{C}^{\times n}$
is also abelian.  
There is an exact functor 
\begin{equation}\label{C^n_trivial_inclusion}
\mathcal{C}^{\times n}\to \End(\mathcal{C};G,H;\vec{i}) 
\end{equation}
given on objects by 
\[(c_0,c_1,\ldots , c_{n-1})\mapsto 
 \Bigl(G^{i_0}(c_1)\xrightarrow{0}H^{i_0}(c_0), G^{i_1}(c_2) \xrightarrow{0}H^{i_1}(c_1),\ldots ,G^{i_{n-1}}(c_0) \xrightarrow{0}H^{i_{n-1}}(c_{n-1})\Bigr),
\] where 0 denotes the zero map in $\mathcal{C}$. 
This functor induces a well-defined homomorphism 
\begin{equation}\label{eq:include_zero}
K_0(
\mathcal{C}^{\times n}
)\to K_0(\mathcal{C};G,H;\vec{i}),
\end{equation} where \(K_0(\mathcal{C}; G, H; \vec{i})\) denotes the zeroth \(K\)-theory of the abelian category \(\End(\mathcal{C}; G, H; \vec{i})\).

\begin{definition}\label{def:reduced_k_theory}
    For $\mathcal{C}$, $G$, and $H$ as in \cref{whoknows_intro}, the {\bf zeroth reduced $K$-theory} of $\End(\mathcal{C};G,H;\vec{i}),$ denoted
$\widetilde{K}_0(\mathcal{C};G,H;\vec{i}),$
is the cokernel of the homomorphism in 
\eqref{eq:include_zero}.
\end{definition}

Following the convention in \cref{def;end_examples}\eqref{def;end_modules}, 
    $K_0(R,S; M,N)\coloneqq K_0(\Mod_{R,S}^c;G,H; (1))$
and 
\begin{equation}\label{lem:modules_functors}   \widetilde{K}_0(R,S; M,N)\coloneqq \widetilde{K}_0(\Mod_{R,S}^c;G,H;(1)).
\end{equation}

Let $\vec{i}=(i_0,i_1)$ be a pair of natural numbers. An isomorphism $GH \cong HG$ defines  a functor 
\begin{equation}\label{eq:def:gammaprime}
    \Gamma '\colon \Mor (\mathcal{C};G,H;(i_0,i_1))\to \Mor (\mathcal{C};G,H;(i_0+i_1)),
\end{equation}
 which sends $(G^{i_0}(c_1)\xrightarrow{f_0}H^{i_0}(c_0),G^{i_1}(c_2)\xrightarrow{f_1}H^{i_1}(c_1))$ to 
    \[G^{i_0+i_1}(c_2)\xrightarrow{G^{i_0}(f_1)}G^{i_0}H^{i_1}(c_1)\cong H^{i_1}G^{i_0}(c_1)\xrightarrow{H^{i_1}(f_0)}H^{i_0+i_1}(c_0),\]
and similarly for morphisms. This extends recursively to arbitrary $\vec{i}=(i_0,\dots, i_{n-1})$. Define
\begin{equation}\label{gamma'}\Gamma '\colon \Mor(\mathcal{C};G,H;(i_0,\dots, i_{n-1}))\to\Mor(\mathcal{C};G,H;(i_0+i_1+\cdots +i_{n-1}))\end{equation}
by 
$\Gamma '(f_0,\dots,f_{n-1})=\Gamma '(f_0,\Gamma '(f_1,(\Gamma'(\cdots f_{n-3},\Gamma '(f_{n-2},f_{n-1}))\cdots)))$
and similarly for morphisms.

We will be most interested in the case where the vector
\[\vec{i}=(\underbrace{1,1,\ldots ,1}_{n-1\text{ copies}}, k)\] has length $n$.  In this case, we denote
$\End(\mathcal{C};G,H;\vec{i})$ by 
$\End(\mathcal{C};G,H;n,k)$ and 
 use the  corresponding notations for the associated $K$-theory and reduced $K$-theory. 
    The objects of the category 
    $\End(\mathcal{C};G,H;n,1)$  are tuples of maps $\left(f_j\colon G(c_{j+1})\to H(c_j)\right)_{j=0}^{n-1}$.  An object of  
    $\End(\mathcal{C}; G,H;1,n )$ is a  map $G^n(c)\to H^n(c).$
The functor in \eqref{gamma'} restricts to  an exact functor 
\begin{equation}\label{eq:K_0_compose_1}
\begin{aligned} 
\Gamma\colon \End(\mathcal{C};G,H;n,1)\to
\End(\mathcal{C}; G,H;1,n )
=
\End(\mathcal{C}; G^{ n},H^{ n};1,1 )
\end{aligned}
\end{equation}
that composes maps.
This induces a homomorphism on reduced $K$-theory
\begin{equation}\label{eq:uncompose_2}
\widetilde{K}_0(\mathcal{C};G,H;n,1) \to \widetilde{K}_0(\mathcal{C}; G,H;1,n ).
\end{equation}
The following result is a consequence  of \cref{lem:uncompose_specific}.
\begin{cor}\label{lem:multiple_uncompose}
      Let $\mathcal{C}$,
$G$, $H$ be as in \cref{whoknows_intro}.
If either $G=1_\mathcal{C}$ or $H=1_\mathcal{C}$, 
the homomorphism in \eqref{eq:uncompose_2} is an isomorphism.
\end{cor}

\begin{remark*}
    We have no evidence that the assumption that $G$ or $H$ is the identity is necessary for the conclusion of this result.  A stronger version of this statement seems possible and even likely.  However, we are unable to prove the result at this time. 
    
    For \cref{def;end_examples}\eqref{def;end_modules}, this means that we  consider twisted endomorphisms rather than bitwisted endomorphisms.  If \cref{lem:multiple_uncompose} holds without the assumption that $G$ or $H$ is the identity, we expect that all results later in the paper that currently require that assumption would also no longer need it.
\end{remark*}

\section{Frobenius}\label{sec:new_maps_frobenius} 

In this section, we generalize 
\cref{structure on End}\eqref{structure on End_F} to $\End(\mathcal{C};G,H;\vec{i})$. We will closely mimic the definition for $\End(A)$, leading to a definition of the Frobenius on the $K$-theory of twisted endomorphism that is easy to work with. We then show the first identity in \cref{structure on End}\eqref{structure on End_id}  continues to hold in this more general context. 

Let 
\begin{equation}\label{eq:iterate_map_2}\End(\mathcal{C};G,H;1,1) \to 
    \End(\mathcal{C};G,H;n,1)
    \end{equation}
    be the functor that sends an object $f\colon G(c)\xrightarrow{} H(c)$ to the tuple 
    \[
    \Bigl(
    G(c)\xrightarrow{f}H(c), G(c)\xrightarrow{f}H(c),\ldots , G(c)\xrightarrow{f}H(c)
    \Bigr)
    .\] 
We think of the image of $f$ as the tuple in \cref{fig:obj_of_end} where all $i_j=1$ and $f_j=f$
    and similarly for morphisms.
 Since the functor in \eqref{eq:iterate_map_2} is exact and the following diagram, where the top arrow is the diagonal,  commutes 
\[\begin{tikzcd}
	{\mathcal{C}} & {\mathcal{C}^{\times n}} \\
	{\End(\mathcal{C};G;H;1,1)    } & {\End(\mathcal{C};G;H;n,1)}
	\arrow[from=1-1, to=1-2]
	\arrow["{\eqref{C^n_trivial_inclusion}}", from=1-1, to=2-1]
	\arrow["{\eqref{C^n_trivial_inclusion}}", from=1-2, to=2-2]
	\arrow["{\eqref{eq:iterate_map_2}}", from=2-1, to=2-2]
\end{tikzcd}\]
 the functor in \eqref{eq:iterate_map_2} induces a homomorphism 
\begin{equation}\label{eq:iterate_map}
    \widetilde{K}_0(\mathcal{C};G,H;1,1)\to 
    \widetilde{K}_0(\mathcal{C};G,H;n,1).
\end{equation}

\begin{lemma}\label{lem:action}
    There is a $\mathbb{Z}/n$ action on $\widetilde{K}_0(\mathcal{C};G,H;n,1).$  
        If  $\Gamma$ in \eqref{eq:K_0_compose_1} is an isomorphism, this  defines a $\mathbb{Z}/n$ action on $\widetilde{K}_0(\mathcal{C};G,H;1, n)$. 
\end{lemma}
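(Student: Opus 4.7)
The plan is to define the $\mathbb{Z}/n$ action by cyclic rotation of the tuples $(f_0,f_1,\dots,f_{n-1})$ at the categorical level, verify that this rotation is an exact endofunctor of $\End(\mathcal{C};G,H;n,1)$ which preserves the image of the inclusion \eqref{C^n_trivial_inclusion}, and then pass to the cokernel to obtain the action on $\widetilde{K}_0(\mathcal{C};G,H;n,1)$. For the second statement I will transport the resulting action along the isomorphism from \cref{lem:multiple_uncompose}.

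Concretely, I would define a functor $\sigma\colon \End(\mathcal{C};G,H;n,1)\to\End(\mathcal{C};G,H;n,1)$ sending an object supported on $(c_0,c_1,\dots,c_{n-1})$ with $c_n=c_0$ and morphisms $f_j\colon G(c_{j+1})\to H(c_j)$ to the object supported on $(c_1,c_2,\dots,c_{n-1},c_0)$ with morphisms $f'_j=f_{j+1}$ (indices mod $n$); morphisms of the category shift by the same rule on tuples $(h_0,\dots,h_{n-1})$. Source and target of $f'_j$ match because the rotated underlying tuple literally places $c_{j+1}$ in position $j$. One verifies directly that $\sigma^n=\id$, so $\sigma$ generates a strict action of $\mathbb{Z}/n$ on the category.

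Exactness of $\sigma$ follows from \cref{whoknows_intro}: kernels, cokernels, and exact sequences in $\End(\mathcal{C};G,H;n,1)$ are computed componentwise in $\mathcal{C}$, and a cyclic permutation of components preserves this structure. Moreover $\sigma$ carries tuples of zero morphisms to tuples of zero morphisms, and on the essential image of \eqref{C^n_trivial_inclusion} it corresponds to the usual cyclic permutation of the factors of $\mathcal{C}^{\times n}$; so the induced square comparing $\sigma$ with \eqref{C^n_trivial_inclusion} and its cyclic counterpart commutes. Hence $\sigma$ descends to an automorphism of the cokernel $\widetilde{K}_0(\mathcal{C};G,H;n,1)$, and the relation $\sigma^n=\id$ in $K_0$ upgrades this to a $\mathbb{Z}/n$ action.

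For the second statement, when $\Gamma$ in \eqref{eq:K_0_compose_1} is an isomorphism, the induced map on reduced $K_0$ is an isomorphism by \cref{lem:multiple_uncompose}, and I would simply transport the $\mathbb{Z}/n$ action across it. The only delicate point in the whole argument is the cyclic bookkeeping—matching source and target of each shifted morphism, and checking that the restriction of $\sigma$ to the zero subcategory is the obvious cyclic permutation of $\mathcal{C}^{\times n}$. Once this is pinned down, no further obstacle is expected.
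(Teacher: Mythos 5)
Your proposal is correct and is essentially the same argument as the paper's: the paper also defines the $\mathbb{Z}/n$ action by the cyclic rotation functor $(f_0,\dots,f_{n-1})\mapsto(f_1,\dots,f_{n-1},f_0)$ on $\End(\mathcal{C};G,H;n,1)$, notes it is exact and that its $n$-th power is the identity, and descends to $\widetilde{K}_0$; the transport along $\Gamma$ handles the second claim. You are slightly more careful than the paper in explicitly checking that the rotation is compatible with the inclusion \eqref{C^n_trivial_inclusion} so that the induced map on $K_0$ passes to the cokernel, a verification the paper omits but which is needed.
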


\begin{proof}

While we think of an element of $
\End(\mathcal{C};G,H;n,1)
$ 
as in \cref{fig:obj_of_end}, it has a specified starting point.   Cyclically rotating the starting point defines an exact endofunctor $R$  
where 
\begin{align*}R&
\Bigl(
G(c_1)\xrightarrow{f_0}H(c_0), G(c_2)\xrightarrow{f_1}H(c_1),\ldots, G(c_0)\xrightarrow{f_{n-1}}H(c_{n-1})
\Bigr)
\\
&=
\Bigl(G(c_2)\xrightarrow{f_1}H(c_1),\ldots , G(c_0)\xrightarrow{f_{n-1}}H(c_{n-1}),G(c_1)\xrightarrow{f_0}H(c_0)\Bigr)
\end{align*}
and similarly on morphisms. This descends to an endomorphism on $\widetilde{K}_0(\mathcal{C},G,H;n,1)$.
Since $R^n$ is the identity, $R$  is the generator of an action of $\mathbb{Z}/n$ on $\widetilde{K}_0(\mathcal{C},G,H;n,1).$ 
\end{proof}

In particular, the image of the homomorphism in \eqref{eq:iterate_map} is contained in 
$ \widetilde{K}_0(\mathcal{C};G,H;n,1)^{\mathbb{Z}/n}$.

\begin{figure}
\tikz{
\tikzmath{\x1=55;}
\tikzmath{\x2=.85;}

\node (gextra) at (-4.5, 0.5) {$G(c)$};
\node (hextra) at (-3, 0.5) {$H(c)$};
\draw [->] (gextra) -- (hextra) node[midway, above] {$f$};

\node (kextra) at (6.5, 0.5) {$G^{\circ n}(c)$};
\node (iextra) at (9, 0.5) {$H^{\circ n}(c)$};
\draw [->] (kextra) -- (iextra) node[midway, above] {$F^n(f)$};

\draw [|->, thick] (4.5, 0.5) -- (5.5, 0.5);

\draw [|->, thick] (-2, 0.5) -- (-1, 0.5);

\begin{scope}[shift={(1.5,0)}]

\draw [dotted] (0,0) circle (1.5*\x2);

\node (cn1) at ({90+\x1}:3*\x2) {$H(c)$};
\node (c0) at (90:3*\x2) {$H(c)$};
\node (c1) at (90-\x1:3*\x2) {$H(c)$};
\node (c2) at (90-2*\x1:3*\x2) {$H(c)$};
\node (c3) at (90-3*\x1:3*\x2) {$H(c)$};

\node (fc0) at (90:1.5*\x2) {$G(c)$};
\node (fc1) at (90-1*\x1:1.5*\x2) {$G(c)$};
\node (fc2) at (90-2*\x1:1.5*\x2) {$G(c)$};
\node (fc3) at (90-3*\x1:1.5*\x2) {$G(c)$};

\draw [<-] (c0) edge [bend left=25] node[midway, right] {$f$} (fc1);
\draw       (c0) edge [dashed] (fc0);
\draw [<-] (c1) edge [bend left=25] node[midway, right] {$f$} (fc2);
\draw       (c1) edge [dashed] (fc1);
\draw [<-] (c2) edge [bend left=25] node[midway, below] {$f$} (fc3);
\draw       (c2) edge [dashed] (fc2);
\draw       (c3) edge [dashed] (fc3);
\draw [<-] (cn1) edge [bend left=25] node[midway, above] {$f$} (fc0);

\end{scope}
}
\caption{$F^n([f])$
    }
\label{fig:repeat_obj}
\end{figure}

\begin{definition}\label{def:frobenius_alt}
The {\bf $n$-Frobenius map} $F^n$ 
is the composite 
\begin{align*}
    \widetilde{K}_0(\mathcal{C};G,H;1,1)
 &\xrightarrow{\mathrm{\eqref{eq:iterate_map}}} 
 \widetilde{K}_0(\mathcal{C};G,H;n,1)^{\mathbb{Z}/n}   
    \xrightarrow{\mathrm{\eqref{eq:uncompose_2}}}  
    \widetilde{K}_0(\mathcal{C};G,H;1,n)=\widetilde{K}_0(\mathcal{C};G^{\circ n} ,H^{\circ n};1,1 ).
\end{align*} 

\end{definition}
The intuition for the Frobenius map is illustrated in \cref{fig:repeat_obj}; compare with the classical case in \cref{fig:classical_frob}.  In cases where \eqref{eq:uncompose_2} is an isomorphism, we will regard $F^n$ as valued in $\widetilde{K}_0(\mathcal{C};G,H;1,n)^{\mathbb{Z}/n}$.

\begin{prop}\label{prop:compute_frobenius}
The image of $\left[ f\colon G(c) \to H(c)\right] $ under the $n$-Frobenius map 
is 
\[[\Gamma(\underbrace{f,f,\ldots, f}_{n-\text{copies}}) ].\]

\end{prop}

\begin{proof}
 Since the maps on $K$-theory are induced by functors of endomorphism categories,  the image of a single  endomorphism class in $K$-theory is determined by  its image at the level of endomorphism categories. 

Consider the following commutative diagram:
\[\begin{tikzcd}
	{\End(\mathcal{C};G,H;1,1)} & {\widetilde{K}_0(\mathcal{C};G,H;1,1)} \\
	{\End(\mathcal{C};G,H;n,1)} & {\widetilde{K}_0(\mathcal{C};G,H;n,1)} \\
	{\End(\mathcal{C};G,H;1,n)} & {\widetilde{K}_0(\mathcal{C};G,H;1,n)}
	\arrow[from=1-1, to=1-2]
	\arrow["{\mathrm{\eqref{eq:iterate_map_2}}}", from=1-1, to=2-1]
	\arrow["{\mathrm{\eqref{eq:iterate_map}}}", from=1-2, to=2-2]
	\arrow[from=2-1, to=2-2]
	\arrow["{\eqref{eq:K_0_compose_1}}", from=2-1, to=3-1]
	\arrow["{\mathrm{\eqref{eq:uncompose_2}}}", from=2-2, to=3-2]
	\arrow[from=3-1, to=3-2]
\end{tikzcd}\]
The left composite is given by
\[
    \bigl(f\colon G(c)\to H(c)\bigr)
    \;\mapsto\;
    \bigl(\underbrace{f,\ldots, f}_{n\text{-copies}}\bigr)
    \;\mapsto\;
    \Gamma\bigl(\underbrace{f,\ldots, f}_{n\text{-copies}}\bigr)
  \]
and so the image of a  class $[f\colon G(c)\to H(c)]$   in $\widetilde{K}_0(\mathcal{C};G,H;1,n)$ is 
$[\Gamma(\underbrace{f,\ldots, f}_{n\text{-copies}})]$.
\end{proof}

\begin{figure}[h]
    \centering
\[\begin{tikzcd}
	{G^{i_0+i_1+i_2}(c_3)} \\
	{G^{i_0+i_1}H^{i_2}(c_2)} & {G^{i_0}H^{i_2}G^{i_1}(c_2)} & {G^{i_0}H^{i_1+i_2}(c_1)} \\
	{H^{i_2}G^{i_0+i_1}(c_2)} & {H^{i_2}G^{i_0}H^{i_1}(c_1)} & {H^{i_1+i_2}G^{i_0}(c_1)} \\
	&& {H^{i_0+i_1+i_2}(c_0)}
	\arrow["{G^{i_0+i_1}(f_2)}"', from=1-1, to=2-1]
	\arrow["\cong", from=2-1, to=2-2]
	\arrow["\cong"', from=2-1, to=3-1]
	\arrow["{G^{i_0}H^{i_2(f_1)}}", from=2-2, to=2-3]
	\arrow["\cong"', from=2-2, to=3-1]
	\arrow["\cong", from=2-3, to=3-2]
	\arrow["\cong", from=2-3, to=3-3]
	\arrow["{H^{i_2}G^{i_0}f_1}"', from=3-1, to=3-2]
	\arrow["\cong"', from=3-2, to=3-3]
	\arrow["{H^{i_1+i_2}(f_0)}"', from=3-3, to=4-3]
\end{tikzcd}\]
    \caption{$\Gamma'(\Gamma '(f_0,f_1),f_2)=\Gamma '(f_0,\Gamma '(f_1,f_2))$}
    \label{figure_gamma}
\end{figure}

Recall the category $\Mor(\mathcal{C};G,H;\vec{i})$ from page \pageref{def:mor} and $\Gamma'$ from \eqref{eq:def:gammaprime}.

\begin{lemma}\label{gamma_coherence}
  For an object $(f_0,\dots, f_{n-1})$ of $\Mor (\mathcal{C};G,H;(i_0,\dots, i_{n-1})),$ and  integers $t_0\dots, t_m$ such that $t_j\geq 2$ and \(t_0 + \cdots + t_m = n\).
\begin{equation*}
    \Gamma '(f_0,f_1,\dots,f_{n-1})=
    \Gamma '
    \Bigl(\Gamma '(\underbrace{f_0,\dots, f_{t_0-1})}_{t_0\text{ objects}},\Gamma '(\underbrace{f_{t_0},f_{t_0+1},\dots ,f_{t_0+t_1-1}}_{t_1\text{ objects}}),\cdots,\Gamma '(\underbrace{f_{t_1+t_2+\cdots +t_{m-1}},\dots ,f_{n-1}}_{t_m\text{ objects}})
    \Bigr). 
\end{equation*}
\end{lemma}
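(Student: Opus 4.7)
The strategy is to reduce to the binary associativity $\Gamma'(\Gamma'(f,g),h)=\Gamma'(f,\Gamma'(g,h))$---essentially the content of \cref{figure_gamma}---and then bootstrap using the right-associated recursive definition of $\Gamma'$. Writing $A_j \coloneqq \Gamma'(f_{t_0+\cdots+t_{j-1}},\ldots,f_{t_0+\cdots+t_j-1})$, the right-hand side of the lemma is $\Gamma'(A_0,A_1,\ldots,A_m)$, and the argument factors through two intermediate claims: binary associativity, and the absorption identity $\Gamma'(\Gamma'(f_0,\ldots,f_{a-1}),f_a,\ldots,f_{n-1})=\Gamma'(f_0,\ldots,f_{n-1})$ for $2 \le a < n$.

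First I would prove binary associativity. Unpacking the definition, both $\Gamma'(\Gamma'(f_0,f_1),f_2)$ and $\Gamma'(f_0,\Gamma'(f_1,f_2))$ are exactly the two composites around the boundary of the diagram in \cref{figure_gamma}. Commutativity of the interior comes from naturality of the iterated swap isomorphism $G^{a}H^{b}\cong H^{b}G^{a}$ (built from the given $GH\cong HG$) combined with the functoriality of $G$ and $H$.

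Next I would derive absorption by induction on $a$. The base $a=2$ is immediate: by definition, $\Gamma'(\Gamma'(f_0,f_1),f_2,\ldots,f_{n-1})=\Gamma'(\Gamma'(f_0,f_1),\Gamma'(f_2,\ldots,f_{n-1}))$, and binary associativity collapses this to $\Gamma'(f_0,\Gamma'(f_1,\Gamma'(f_2,\ldots,f_{n-1})))=\Gamma'(f_0,\ldots,f_{n-1})$. For $a>2$, rewrite the left factor as $\Gamma'(f_0,\Gamma'(f_1,\ldots,f_{a-1}))$, apply binary associativity to move $f_0$ outside, and invoke the inductive hypothesis on the inner fold of length $a-1$. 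The lemma itself then follows by a second induction on $m$: $\Gamma'(A_0,A_1,\ldots,A_m)=\Gamma'(A_0,\Gamma'(A_1,\ldots,A_m))$ by the recursive definition; the inductive hypothesis rewrites the inner term as $\Gamma'(f_{t_0},\ldots,f_{n-1})$; and absorption collapses $\Gamma'(A_0,f_{t_0},\ldots,f_{n-1})$ to $\Gamma'(f_0,\ldots,f_{n-1})$.

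The real work is the diagram chase of \cref{figure_gamma}: verifying that each interior cell commutes reduces to checking that the two ways of assembling the iterated swap $G^{a}H^{b}\cong H^{b}G^{a}$ agree, which is a routine but slightly tedious \textsc{MacLane}-style coherence argument relying only on naturality of $GH\cong HG$ and the functoriality of $G$ and $H$. Once this ternary case is settled, the remainder of the proof is a purely combinatorial manipulation of right-associated folds.
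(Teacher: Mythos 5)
Your proof is correct and follows essentially the same route as the paper: establish the ternary associativity $\Gamma'(\Gamma'(f_0,f_1),f_2)=\Gamma'(f_0,\Gamma'(f_1,f_2))$ from the commutative diagram in \cref{figure_gamma} (whose interior cells commute by naturality of $GH\cong HG$ together with functoriality of $G$ and $H$), and then bootstrap to the general case by induction. The paper's proof compresses the induction into a single sentence; you have usefully spelled it out as a two-layer argument (absorption by induction on $a$, then the full statement by induction on $m$), which is a sound and standard way to make the terse ``follows by induction'' precise.
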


\begin{proof}
    For a triple of natural numbers $\vec{i}=(i_0,i_1,i_2)$ and an object $(G^{i_j}(c_{j+1})\xrightarrow{f_j}H^{i_j}(c_j))_{j=0}^{2}$ of $\Mor(\mathcal{C};G,H;(i_0,i_1,i_2)),$  the commutative diagram in \cref{figure_gamma} implies 
\[\Gamma '(f_0,f_1,f_2)=\Gamma'(\Gamma '(f_0,f_1),f_2)=\Gamma '(f_0,\Gamma '(f_1,f_2)).\] 
The general statement follows by induction.
\end{proof}

Since $\Gamma '$ extends $\Gamma,$ a similar formula holds for $\Gamma$ when it makes sense.

\begin{cor}\label{prop:iterate_frobenius}
For natural numbers $n$ and $m$, $F^nF^m=F^{nm}.$
\end{cor}

\begin{example} Returning to 
\cref{def;end_examples}\eqref{def;end_modules}, 
the {\bf $n$-Frobenius map}  \[F^n\colon \widetilde{K}_0(R,S;M,N) \to \widetilde{K}_0(R;S;M^{\otimes n},N^{\otimes n})\]
is the  composite 
\begin{align*}
    \widetilde{K}_0(R,S;M,N)&\xlongequal{\mathrm{\eqref{lem:modules_functors}}}\widetilde{K}_0(\Mod_{R,S}^c;G,H;1,1)
    \\
    &\xrightarrow{\mathrm{\cref{def:frobenius_alt}}}
    \widetilde{K}_0(\Mod_{R,S}^c;G,H;1,n)
    \\
    &\xlongequal{\mathrm{\eqref{lem:modules_functors}}} \widetilde{K}_0(R,S;M^{\otimes n};N^{\otimes n})
\end{align*}

\cref{prop:compute_frobenius} implies the image of $\left[ f\colon M\otimes_R P\to P\otimes_S N\right] $ under the $n$-Frobenius map is
\begin{center}
\resizebox{\textwidth}{!}
{
$\left[ 
M^{\otimes_R n} \otimes_R P\rightarrow \!\cdots \!\rightarrow 
M^{\otimes_R 2}\otimes_R P\otimes_S N^{\otimes_S n-2} 
\xrightarrow{\id_M\otimes f\otimes \id_N^{n-2}} 
M\otimes_R P\otimes_S N^{\otimes_S n-1 }
\xrightarrow{f\otimes \id_N^{n-1}} 
P\otimes_S N^{\otimes_S n}\right].
$
}
\end{center}
When we restrict to $\End(A)$ for a commutative ring $A$, this implies the Frobeinus map agrees with the one defined in \cite[p. 319]{ALM3}.

\end{example}

\section{Verschiebung}\label{sec:new_maps_verscheibung} 

In this section, we define a Verschiebung map on $\End(\mathcal{C};G,H;\vec{i})$ and show that it satisfies the second and third properties in \cref{structure on End}\eqref{structure on End_id}.

 For commutative rings, a key property of the Frobenius and Verschiebung maps   is that 
\[F^nV^n(P,f)=(P^{\oplus n}, f^{\oplus n})\]
and we think of this composition as ``multiplication by $n$''.  We don't expect such a simple relation in general, but 
the Verschiebung map will  be a homomorphism 
\[V^n\colon\widetilde{K}_0(\mathcal{C};G,H;1,n)\to \widetilde{K}_0(\mathcal{C};G,H;1,1), \]
such that $F^nV^n$ has an interpretation as ``multiplication by $n$'' (\cref{lem:fnvn}). 

We require the functors $G, H\colon \mathcal{C}\to \mathcal{C}$  to preserve direct sums and   define a functor 
\begin{equation}\label{eq:def_sigma_mor}
\sigma \colon \Mor(\mathcal{C};G,H;n,1) \to \Mor(\mathcal{C};G,H;1,1)
\end{equation}
that sends an object \((f_j\colon G(c_{j+1}) \to H(c_j))_{j=0}^{n-1}\) to the composite
\begin{align*}
G\left( \bigoplus_{j=0}^{n-1} c_{j+1} \right)
&\xrightarrow{\oplus_{j=0}^{n-1} G(\pi_{j+1})} \bigoplus_{j=0}^{n-1} G(c_{j+1}) 
\xrightarrow{\oplus_{j=0}^{n-1} f_j} \bigoplus_{j=0}^{n-1} H(c_j) 
\xrightarrow{\oplus_{j=0}^{n-1} H (l_j)} H\left(\bigoplus_{j=0}^{n-1}c_j \right),
\end{align*}
where \(\pi_j\) and \(l_j\) denote the canonical projections and inclusions. 

For objects $c_0,\ldots, c_{n-1}$,
let 
\begin{equation}\label{eq:define_twist}
t\colon \bigoplus_{j=0}^{n-1} c_j\to 
 \bigoplus_{j=0}^{n-1} c_{j+1}
\end{equation}
be the natural isomorphism in \(\mathcal{C}\) that rotates the summands of the direct sum by moving the first component to the end.  (The same isomorphism appears in \cref{structure on End}\eqref{structure on End_V}.)

\begin{remark}
    Note that we reduce the subscripts of $c_i$ mod $n$.  We will continue this convention in the rest of the paper.    
\end{remark}

Define a functor   
\begin{equation}\label{eq:summing_map}
\begin{tikzcd}[row sep =-.05in]
	{\End(\mathcal{C};G,H;n,1)} & {\End(\mathcal{C};G,H;1,1)} \\
	{\left(G(c_1)\xrightarrow{f_0}H(c_0),G(c_2)\xrightarrow{f_1}H(c_1) , \ldots , G(c_0)\xrightarrow{f_{n-1}}H(c_{n-1}) \right)} & {\sigma(f_0, \dots, f_{n-1}) \circ G(t)}
	\arrow[from=1-1, to=1-2]
	\arrow[maps to, from=2-1, to=2-2]
\end{tikzcd}
\end{equation} 
Morphisms  are similarly mapped by applying  
$G(t)$ and then taking the direct sum of the componentwise morphisms.
\begin{figure}
\tikz{
\tikzmath{\x1=1.6;}
\tikzmath{\x2=.65;}

\def\ParenL{\resizebox{.06in}{.7in}{$\Bigg($}}
\def\ParenR{\resizebox{.06in}{.7in}{$\Bigg)$}}

\node (Gleftlabel) at (-4.5*\x1, -3.5*\x2) {$G$};
\node (Gleftlpar) at (-4.3*\x1, -3.5*\x2) {\ParenL};
\node (Gleftc0) at (-4*\x1, -2*\x2) {$c_0$};
\node (Gleftc1) at (-4*\x1, -3*\x2) {$c_1$};
\node (Gleftvdots) at (-4*\x1, -4*\x2) {$\vdots$};
\node (Gleftcn1) at (-4*\x1, -5*\x2) {$c_{n-1}$};
\node (Gleftrpar) at (-3.7*\x1, -3.5*\x2) {\ParenR};

\node (Grightlabel) at (-2.5*\x1, -3.5*\x2) {$G$};
\node (Grighlpar) at (-2.3*\x1, -3.5*\x2) {\ParenL};
\node (Grightc1) at (-2*\x1, -2*\x2) {$c_1$};
\node (Grightvdots) at (-2*\x1, -3*\x2) {$\vdots$};
\node (Grightcn1) at (-2*\x1, -4*\x2) {$c_{n-1}$};
\node (Grightc0) at (-2*\x1, -5*\x2) {$c_0$};
\node (Grightrpar) at (-1.7*\x1, -3.5*\x2) {\ParenR};

\draw [->, thick] (-3.6*\x1, -3.5*\x2) -- (-2.7*\x1, -3.5*\x2) node[midway, above] {$G(t)$};

\node (fc0) at (0*\x1, -2*\x2) {$G(c_1)$};
\node (f1)  at (0*\x1, -3*\x2) {$\vdots$};
\node (fc1) at (0*\x1, -4*\x2) {$G(c_{n-1})$};
\node (fc2) at (0*\x1, -5*\x2) {$G(c_0)$};

\node (gc0) at (1.5*\x1, -2*\x2) {$H(c_0)$};
\node (g1)  at (1.5*\x1, -3*\x2) {$\vdots$};
\node (gc1) at (1.5*\x1, -4*\x2) {$H(c_{n-2})$};
\node (gc2) at (1.5*\x1, -5*\x2) {$H(c_{n-1})$};

\draw [->] (fc0) -- (gc0) node[midway, above] {$f_0$};
\draw [->] (fc1) -- (gc1) node[midway, above] {$f_1$};
\draw [->] (fc2) -- (gc2) node[midway, above] {$f_2$};

\node (hlabel) at (3*\x1, -3.5*\x2) {$H$};
\node (hlpar) at (3.2*\x1, -3.5*\x2) {\ParenL};
\node (hc0) at (3.5*\x1, -2*\x2) {$c_0$};
\node (h1) at (3.5*\x1, -3*\x2) {$\vdots$};
\node (hc1) at (3.5*\x1,-4*\x2) {$c_{n-2}$};
\node (hc2) at (3.5*\x1,-5*\x2) {$c_{n-1}$};
\node (hrpar) at (3.8*\x1, -3.5*\x2) {\ParenR};

\draw [->, thick] (-1.5*\x1, -3.5*\x2) -- (-.6*\x1, -3.5*\x2) node[midway, above] {$\cong$};

\draw [->,thick] (2*\x1, -3.5*\x2) -- (2.8*\x1, -3.5*\x2) node[midway, above] {$\cong$};

}
\caption{Intuition for the image of $V^n(f)$ where $f = \Gamma(f_0, f_1, \dots, f_{n-1})$.}
\end{figure}

Since the functor in  \eqref{eq:summing_map}  is exact and the following diagram, where the top row sums, commutes 
\[\begin{tikzcd}
	{\mathcal{C}^{\times n}} && {\mathcal{C}} \\
	{\End(\mathcal{C};G;H;n,1)} && {\End(\mathcal{C};G;H;1,1)    }
	\arrow[from=1-1, to=1-3]
	\arrow["{\eqref{C^n_trivial_inclusion}}", from=1-1, to=2-1]
	\arrow["{\eqref{C^n_trivial_inclusion}}", from=1-3, to=2-3]
	\arrow["{\eqref{eq:summing_map}}"', from=2-1, to=2-3]
\end{tikzcd}\] \eqref{eq:summing_map} induces a  homomorphism 
    \begin{equation}\label{eq:summing_map_2}
    \widetilde{K}_0(\mathcal{C};G,H;n,1)
    \to 
    \widetilde{K}_0(\mathcal{C};G,H;1,1).\end{equation}

\begin{definition}\label{defn:verschiebung_alt}
If $G$ or $H$ is the identity, the {\bf $n$-Verschiebung map} $V^n$ is the composite 
    \begin{align*}
\widetilde{K}_0(\mathcal{C};G^{\circ n} ,H^{\circ n};1,1 )=
\widetilde{K}_0(\mathcal{C};G,H;1,n)
    &\xleftarrow[\cong]
    {\mathrm{\eqref{eq:uncompose_2}}}  
    \widetilde{K}_0(\mathcal{C};G,H;n,1)
\xrightarrow{\mathrm{\eqref{eq:summing_map_2}}} 
\widetilde{K}_0(\mathcal{C};G,H;1,1).
\end{align*}
\end{definition}

\begin{example}\label{defn:verschiebung} Returning to \cref{def;end_examples}\eqref{def;end_modules},
    the {\bf $n$-Verschiebung map}
    \[V^n\colon  \widetilde{K}_0(R,S;M^{\otimes_R n},N^{\otimes_S n})\to \widetilde{K}_0(R,S; M,N)\]
    is the composite 
    \begin{align*}
    \widetilde{K}_0(R,S;M^{\otimes_R n},N^{\otimes_S n})&\stackrel{\mathrm{\eqref{lem:modules_functors}}}{=}
\widetilde{K}_0(\Mod_{R,S}^c;G,H;1,n)
\\
&\xrightarrow{\mathrm{\cref{defn:verschiebung_alt}}}
\widetilde{K}_0(\Mod_{R,S}^c;G,H;1,1)
\\
&\stackrel{\mathrm{\eqref{lem:modules_functors}}}{=} \widetilde{K}_0(R,S; M,N)
\end{align*}

    If \(A\) is a commutative ring and $M=N=A$,
this map coincides with the classical Verschiebung map from   \cref{fig:classical_versh}, since \eqref{eq:uncompose_2} has an inverse on the level of endomorphisms, given by
    \[[f\colon P\to P]\mapsto [f\colon P\to P,\id\colon P\to P,\ldots ,\id\colon P\to P].  \]
\end{example}

We don't have an analog of \cref{prop:compute_frobenius} for the Verschiebung because we neither possess nor expect an explicit description of the inverse of the isomorphism in \eqref{eq:uncompose_2}.  We do have an analog of \cref{prop:iterate_frobenius}, but we have to prove it directly.

\begin{prop}\label{prop:vnvm}
For natural numbers $n$ and $m$, $V^nV^m=V^{nm}.$
\end{prop}

\begin{example}\label{ex:vnvm}
    To develop intuition,  consider the example where $n=3$, $m=2$ and $G=H=\id$.  (In this case we could factor an endomorphism $f$ using $f$ and 5 identity maps, but to develop intuition, we consider a more general factorization.)
    
    Let $f\colon c\to c$ and suppose $f=g_0\circ g_1\circ \cdots \circ  g_5$ for $g_0\colon c_1\to c$, $g_1\colon c_2\to c_1$, 
     $g_2\colon c_3\to c_2$, $g_3\colon c_4\to c_3$, $g_4\colon c_5\to c_4$, $g_5\colon c\to c_5$.   
    Then $f=(g_0\circ g_1\circ g_2) \circ (g_3\circ g_4\circ  g_5)$ and $V^2(f)$ is as in \cref{fig:v2}.  The maps in \cref{fig:v2preimage} are a preimage for $V^2(f)$ under $\Gamma$.  Then $V^3V^2(f)$ is 
    \cref{fig:v3v2}.  By coherence $V^3V^2(f)=V^6(f)$.
\end{example}
  \begin{figure}
    \centering
    \begin{minipage}[b]{0.47\textwidth}
      \begin{subfigure}[b]{\linewidth}
      \centering
	   \tikz{
\tikzmath{\x1=.65;}
\tikzmath{\x2=.65;}

\node (fc0) at (-1*\x1, 2*\x2) {$c$};
\node (fc1) at (-1*\x1,1*\x2) {$c_3$};

\node (gc0) at (1*\x1, 2*\x2) {$c_3$};
\node (gc1) at (1*\x1,1*\x2) {$c$};

\node (hc0) at (3*\x1, 2*\x2) {$c_2$};
\node (hc1) at (3*\x1,1*\x2) {$c_5$};

\node (kc0) at (5*\x1, 2*\x2) {$c_1$};
\node (kc1) at (5*\x1,1*\x2) {$c_4$};

\node (lc0) at (7*\x1, 2*\x2) {$c$};
\node (lc1) at (7*\x1,1*\x2) {$c_3$};

\draw [->] (fc0)
edge 
(gc1) ;

\draw [-, line width=2mm, white] (fc1)
edge  node[midway, above] {$f$} (gc0) ;

\draw [->] (fc1)
edge  
(gc0) ;

\draw [->] (gc0)
edge  node[midway, above] {$g_2$} (hc0) ;

\draw [->] (gc1)
edge  node[midway, above] {$g_5$} (hc1) ;

\draw [->] (hc0)
edge  node[midway, above] {$g_1$} (kc0) ;

\draw [->] (hc1)
edge  node[midway, above] {$g_4$} (kc1) ;

\draw [->] (kc0)
edge  node[midway, above] {$g_0$} (lc0) ;

\draw [->] (kc1)
edge  node[midway, above] {$g_3$} (lc1) ;
}
\caption{$V^2(f)$}\label{fig:v2}
      \end{subfigure}\\[\baselineskip]
      \begin{subfigure}[b]{\linewidth}
	   \tikz{
\tikzmath{\x1=.65;}
\tikzmath{\x2=.65;}
\node (fc0) at (-1*\x1, 2*\x2) {$c$};
\node (fc1) at (-1*\x1,1*\x2) {$c_3$};

\node (gc0) at (1*\x1, 2*\x2) {$c_3$};
\node (gc1) at (1*\x1,1*\x2) {$c$};

\node (hc0) at (3*\x1, 2*\x2) {$c_2$};
\node (hc1) at (3*\x1,1*\x2) {$c_5$};

\node (dhc0) at (3*\x1+\x1, 2*\x2) {$c_2$};
\node (dhc1) at (3*\x1+\x1,1*\x2) {$c_5$};

\node (kc0) at (5*\x1+\x1, 2*\x2) {$c_1$};
\node (kc1) at (5*\x1+\x1,1*\x2) {$c_4$};

\node (dkc0) at (5*\x1+2*\x1, 2*\x2) {$c_1$};
\node (dkc1) at (5*\x1+2*\x1,1*\x2) {$c_4$};

\node (lc0) at (7*\x1+2*\x1, 2*\x2) {$c$};
\node (lc1) at (7*\x1+2*\x1,1*\x2) {$c_3$};

\draw [->] (fc0)
edge  node[midway, above] {} (gc1) ;

\draw [-, line width=2mm, white] (fc1)
edge  node[midway, above] {$f$} (gc0) ;

\draw [->] (fc1)
edge  
(gc0) ;

\draw [->] (gc0)
edge  node[midway, above] {$g_2$} (hc0) ;

\draw [->] (gc1)
edge  node[midway, above] {$g_5$} (hc1) ;

\draw [->] (dhc0)
edge  node[midway, above] {$g_1$} (kc0) ;

\draw [->] (dhc1)
edge  node[midway, above] {$g_4$} (kc1) ;

\draw [->] (dkc0)
edge  node[midway, above] {$g_0$} (lc0) ;

\draw [->] (dkc1)
edge  node[midway, above] {$g_3$} (lc1) ;

}
\caption{$\Gamma^{-1}(V^2(f))$}\label{fig:v2preimage}
      \end{subfigure}
    \end{minipage}
        \hspace{.5in}
        \begin{subfigure}[b]{0.3\textwidth}
\tikz{
\tikzmath{\x1=.65;}
\tikzmath{\x2=.65;}

\node (ec0) at (-3*\x1, 2*\x2) {$c$};
\node (ec1) at (-3*\x1, 1*\x2) {$c_3$};
\node (ec2) at (-3*\x1, -2*\x2) {$c_2$};
\node (ec3) at (-3*\x1, -3*\x2) {$c_5$};
\node (ec4) at (-3*\x1, 0*\x2) {$c_1$};
\node (ec5) at (-3*\x1, -1*\x2) {$c_4$};

\node (fc0) at (-1*\x1, -2*\x2) {$c$};
\node (fc1) at (-1*\x1,-3*\x2) {$c_3$};

\node (gc0) at (1*\x1, -2*\x2) {$c_3$};
\node (gc1) at (1*\x1,-3*\x2) {$c$};

\node (hc0) at (3*\x1, -2*\x2) {$c_2$};
\node (hc1) at (3*\x1,-3*\x2) {$c_5$};

\node (dhc0) at (-1*\x1, 0*\x2) {$c_2$};
\node (dhc1) at (-1*\x1,-1*\x2) {$c_5$};

\node (kc0) at (3*\x1, 0*\x2) {$c_1$};
\node (kc1) at (3*\x1,-1*\x2) {$c_4$};

\node (dkc0) at (-1*\x1, 2*\x2) {$c_1$};
\node (dkc1) at (-1*\x1,1*\x2) {$c_4$};

\node (lc0) at (3*\x1, 2*\x2) {$c$};
\node (lc1) at (3*\x1,1*\x2) {$c_3$};

\draw [->] (ec0)
edge[out=-45,in=135]   node[midway, above] {} (fc0) ;
\draw [->] (ec1)
edge [out=-45,in=135]  node[midway, above] {} (fc1) ;

\draw [-, line width=2mm, white] (ec2)
edge  
(dhc0) ;

\draw [->] (ec2)
edge  
(dhc0) ;

\draw [-, line width=2mm, white] (ec3)
edge  
(dhc1) ;

\draw [->] (ec3)
edge  
(dhc1) ;

\draw [-, line width=2mm, white] (ec4)
edge  
(dkc0) ;

\draw [->] (ec4)
edge  
(dkc0) ;

\draw [-, line width=2mm, white] (ec5)
edge  
(dkc1) ;

\draw [->] (ec5)
edge  
(dkc1) ;

\draw [->] (fc0)
edge  node[midway, above] {} (gc1) ;

\draw [-, line width=2mm, white] (fc1)
edge  node[midway, above] {$f$} (gc0) ;

\draw [->] (fc1)
edge  
(gc0) ;

\draw [->] (gc0)
edge  node[midway, above] {$g_2$} (hc0) ;

\draw [->] (gc1)
edge  node[midway, above] {$g_5$} (hc1) ;

\draw [->] (dhc0)
edge  node[midway, above] {$g_1$} (kc0) ;

\draw [->] (dhc1)
edge  node[midway, above] {$g_4$} (kc1) ;

\draw [->] (dkc0)
edge  node[midway, above] {$g_0$} (lc0) ;

\draw [->] (dkc1)
edge  node[midway, above] {$g_3$} (lc1) ;

}
    \caption{$V^3V^2(f)$}\label{fig:v3v2}
    \end{subfigure}
\caption{\cref{ex:vnvm}}\label{fig:overall:v3v2}
  \end{figure}

\begin{proof}[Proof of \cref{prop:vnvm}]

We first work at the level of endomorphisms and define maps $\alpha$ and $\beta$ so that the regions of the  diagram in \cref{VnVm_square} commute up to isomorphism.  Then passing to reduced $K$-theory will give the statement of the theorem.

Define a map \[\alpha:\End(\mathcal{C};G,H;nm,1)\to \End(\mathcal{C};G^{\circ n},H^{\circ n};m,1)\]
as
    \[\alpha(f_0, f_1, \ldots, f_{nm-1})
\coloneqq (\Gamma '(f_0,f_1, \ldots, f_{n-1}), \Gamma '(f_n,f_{n+1}, \ldots , f_{2n-1}), \ldots , \Gamma '(f_{(m-1)n}, \ldots ,f_{mn-1}))\]
where, for $j=0,\dots, m-1,$
\[\Gamma'(f_{jn},f_{jn+1},\dots,f_{(j+1)n-1}):G^n(c_{(j+1)n})\to H^n (c_{jn}).\]
By \cref{gamma_coherence}, the top triangle in \cref{VnVm_square} commutes.

\begin{figure}
\begin{tikzcd}
	{\End(\mathcal{C};G,H;1,nm)} \\
	& {\End({\mathcal{C}};{G^{\circ n}},{H^{\circ n}};m,1)} \\
	{\End(\mathcal{C};G,H;nm,1)} & {\End(\mathcal{C};G,H;1,n)} \\
	& {\End(\mathcal{C};G,H;n,1)} \\
	{\End(\mathcal{C};G,H;1,1)}
	\arrow["\Gamma"', from=2-2, to=1-1]
	\arrow["{\eqref{eq:summing_map}}", from=2-2, to=3-2]
	\arrow["\Gamma", from=3-1, to=1-1]
	\arrow["\alpha", from=3-1, to=2-2]
	\arrow["\beta"', from=3-1, to=4-2]
	\arrow["{\eqref{eq:summing_map}}"', from=3-1, to=5-1]
	\arrow["\Gamma"', from=4-2, to=3-2]
	\arrow["{\eqref{eq:summing_map}}", from=4-2, to=5-1]
\end{tikzcd}
\caption{Diagram for the proof of \cref{prop:vnvm}}\label{VnVm_square}
\end{figure}

The functor $\beta$ is defined as in \eqref{eq:def_sigma_mor}, except we sum groups of $m$ maps whose indices differ by a multiple of $n$. 
Explicitly $\beta$ is defined by
\begin{align*}
    \beta(f_0, f_1, &\ldots , f_{nm-1})
    \\&\coloneqq 
(\sigma(f_0, f_n, \ldots ,f_{n(m-1)} )
, \sigma(f_1, f_{n+1}, \ldots ,f_{n(m-1)+1}), \ldots, 
\sigma(f_{n-1}, f_{2n-1}, \ldots , f_{nm-1})\circ Gt)
\end{align*}
where for $i=0,\dots ,n-1,$
$$\sigma(f_i,f_{n+i},\dots, f_{n(m-1)+i})\colon G(c_{i+1}\oplus c_{n+i+1}\oplus \cdots\oplus c_{n(m-1)+i+1}) \to H(c_i\oplus c_{n+i}\oplus \cdots\oplus c_{n(m-1)+i}),$$
and so,
\[\sigma(f_{n-1},f_{2n-1},\dots ,f_{mn-1} )\circ Gt\colon G(c_0\oplus c_n\oplus\cdots \oplus c_{(m-1)n})\to H(c_{n-1}\oplus c_{2n-1}\oplus \cdots \oplus c_{mn-1}).\]

The middle square then compares
\begin{equation}\label{eq:middle_square_1}\sigma(\Gamma '(f_0, \ldots, f_{n-1}), \Gamma '(f_n, \ldots , f_{2n-1}), \ldots , \Gamma '(f_{n(m-1)}, \ldots f_{nm-1})\circ Gt)
\end{equation}
and 
\begin{equation}\label{eq:middle_square_2}\Gamma(\sigma(f_0, f_n, \ldots ,f_{n(m-1)} )
, \sigma(f_1, f_{n+1}, \ldots ,f_{n(m-1)+1}), \ldots, 
\sigma(f_{n-1}, f_{2n-1}, \ldots , f_{nm-1})).
\end{equation}
Both arrows have domain $G^n(c_0\oplus \cdots\oplus c_{(m-1)n})$ and codomain $H^n(c_0\oplus c_n\oplus \cdots\oplus c_{(m-1)n}).$ The equality of these arrows follows from the fact that $G$ and $H$ are symmetric monoidal functors and the naturality of the isomorphism $GH\to HG.$ Finally, the bottom triangle commutes up to isomorphism by coherence for the  symmetric monoidal category $\mathcal{C},$ and coherence for the symmetric monoidal functors $G,H$.
\end{proof}

To state our result about $F^nV^n$, we will need to recall the transfer from group cohomology.

\begin{definition}\label{def:transfer} \cite[Transfer Maps 6.7.16]{weibel:homological}
    For an abelian group $A$ with an action of a group $G$ and  a finite index normal subgroup $H$ of $G$, the {\bf transfer}   $\tau\colon A^H\to A^G$ is the homomorphism defined by $\tau(a)=\oplus_{[gH]} ga$.  
\end{definition}

\begin{figure}[h]
\tikz{
\tikzmath{\x1=.65;}
\tikzmath{\x2=.65;}

\node (ec0) at (-3*\x1, -2*\x2) {$c_0$};
\node (ec1) at (-3*\x1, -3*\x2) {$c_1$};
\node (ec2) at (-3*\x1, -4*\x2) {$c_2$};

\node (fc0) at (-1*\x1, -2*\x2) {$c_1$};
\node (fc1) at (-1*\x1,-3*\x2) {$c_2$};
\node (fc2) at (-1*\x1,-4*\x2) {$c_0$};

\node (gc0) at (1*\x1, -2*\x2) {$c_0$};
\node (gc1) at (1*\x1,-3*\x2) {$c_1$};
\node (gc2) at (1*\x1,-4*\x2) {$c_2$};

\node (hc0) at (3*\x1, -2*\x2) {$c_1$};
\node (hc1) at (3*\x1, -3*\x2) {$c_2$};
\node (hc2) at (3*\x1, -4*\x2) {$c_0$};

\node (ic0) at (5*\x1, -2*\x2) {$c_0$};
\node (ic1) at (5*\x1,-3*\x2) {$c_1$};
\node (ic2) at (5*\x1,-4*\x2) {$c_2$};

\node (jc0) at (7*\x1, -2*\x2) {$c_1$};
\node (jc1) at (7*\x1,-3*\x2) {$c_2$};
\node (jc2) at (7*\x1,-4*\x2) {$c_0$};

\node (kc0) at (9*\x1, -2*\x2) {$c_0$};
\node (kc1) at (9*\x1,-3*\x2) {$c_1$};
\node (kc2) at (9*\x1,-4*\x2) {$c_2$};

\draw [->] (ec0)
edge[out=-25,in=155]   node[midway, above] {} (fc2) ;

\draw [-, line width=2mm, white] (ec1)
edge  
(fc0) ;

\draw [->] (ec1)
edge  
(fc0) ;

\draw [-, line width=2mm, white] (ec2)
edge  
(fc1) ;

\draw [->] (ec2)
edge  
(fc1) ;

\draw [->] (fc0)
edge  node[midway, above] {$f_0$} (gc0) ;
\draw [->] (fc1)
edge  node[midway, above] {$f_1$} (gc1) ;
\draw [->] (fc2)
edge  node[midway, above] {$f_2$} (gc2) ;

\draw [->] (gc0)
edge[out=-25,in=155]   node[midway, above] {} (hc2) ;

\draw [-, line width=2mm, white] (gc1)
edge  
(hc0) ;

\draw [->] (gc1)
edge  
(hc0) ;

\draw [-, line width=2mm, white] (gc2)
edge  
(hc1) ;

\draw [->] (gc2)
edge  
(hc1) ;

\draw [->] (hc0)
edge  node[midway, above] {$f_0$} (ic0) ;
\draw [->] (hc1)
edge  node[midway, above] {$f_1$} (ic1) ;
\draw [->] (hc2)
edge  node[midway, above] {$f_2$} (ic2) ;

\draw [->] (ic0)
edge [out=-25,in=155]  node[midway, above] {} (jc2) ;

\draw [-, line width=2mm, white] (ic1)
edge  
(jc0) ;

\draw [->] (ic1)
edge  
(jc0) ;

\draw [-, line width=2mm, white] (ic2)
edge  
(jc1) ;

\draw [->] (ic2)
edge  
(jc1) ;

\draw [->] (jc0)
edge  node[midway, above] {$f_0$} (kc0) ;
\draw [->] (jc1)
edge  node[midway, above] {$f_1$} (kc1) ;
\draw [->] (jc2)
edge  node[midway, above] {$f_2$} (kc2) ;

}
    \caption{\cref{lem:fnvn} for $n=3$ and $G=H=\id$}
    \label{eg:fnvn}
\end{figure}

\begin{lemma}\label{lem:fnvn}
    For a natural number $n$,
    $F^nV^n:\widetilde{K}_0(\mathcal{C};G,H;1,n)\to \widetilde{K}_0(\mathcal{C};G,H;1,n)^{\mathbb{Z}/n}$
    is the transfer for  the group action in \cref{lem:action} and the sum in \eqref{eq:biprod}. 
\end{lemma}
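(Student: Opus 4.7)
The plan is to work at the level of representative morphisms in $\End(\mathcal{C};G,H;1,n)$, decompose the composite $F^nV^n$ explicitly as a direct sum of $\mathbb{Z}/n$-translates, and conclude by additivity of $\widetilde{K}_0$ on the biproduct \eqref{eq:biprod}. Since $V^n$ is defined in \cref{defn:verschiebung_alt} only when $G$ or $H$ is the identity, \cref{lem:multiple_uncompose} applies and $\Gamma$ in \eqref{eq:K_0_compose_1} is an isomorphism, so every class in $\widetilde{K}_0(\mathcal{C};G,H;1,n)$ lifts through $\Gamma$.

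Given $[\phi]\in\widetilde{K}_0(\mathcal{C};G,H;1,n)$, I would lift $\phi$ to a representative $\Gamma(f_0,\dots,f_{n-1})$ with $f_j\colon G(c_{j+1})\to H(c_j)$, and set $d := \bigoplus_{j=0}^{n-1} c_j$. Unpacking \eqref{eq:def_sigma_mor} and \eqref{eq:define_twist}, the morphism $V^n(\phi)\colon G(d)\to H(d)$ is block-permutation with respect to the splittings $G(d)=\bigoplus_j G(c_j)$ and $H(d)=\bigoplus_j H(c_j)$: it sends $G(c_j)$ into $H(c_{j-1})$ via $f_{j-1}$ and kills all other blocks (indices mod $n$). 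By \cref{prop:compute_frobenius}, $F^nV^n[\phi]$ is represented by the $n$-fold composite $(V^n\phi)^{\circ n}\colon G^n(d)\to H^n(d)$ regarded as an object of $\End(\mathcal{C};G,H;1,n)$. Because $G$ and $H$ preserve direct sums, so do $G^n$ and $H^n$; threading $V^n\phi$ through the isomorphisms $G^{n-k}H\cong HG^{n-k}$ (as in the middle square of \cref{VnVm_square}) at each of the $n$ stages of $(V^n\phi)^{\circ n}$, the composite respects the splittings, and the summand starting at $G^n(c_j)$ returns to $H^n(c_j)$ by applying $f_{j-1}, f_{j-2}, \dots, f_j$ in order. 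By the coherence statement of \cref{gamma_coherence}, this summand equals $\Gamma(R^j(f_0,\dots,f_{n-1}))$, which represents $R^j[\phi]$.

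Summing over $j$ and applying additivity of $\widetilde{K}_0$ on \eqref{eq:biprod},
\[
F^nV^n[\phi] \;=\; \sum_{j=0}^{n-1} R^j[\phi],
\]
which is exactly the transfer $\tau$ of \cref{def:transfer} from the trivial subgroup $\{e\}\subseteq\mathbb{Z}/n$; the sum is automatically $\mathbb{Z}/n$-invariant, matching the codomain $\widetilde{K}_0(\mathcal{C};G,H;1,n)^{\mathbb{Z}/n}$ of $F^n$ recorded after \cref{def:frobenius_alt}.

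The hard part is the coherence bookkeeping in the middle paragraph: one must track carefully how the $n$-fold iterated interchanges $G^iH\cong HG^i$ interact with the direct-sum splittings of $G^n(d)$ and $H^n(d)$ at every step, so as to identify each diagonal block with $\Gamma(R^j(f_0,\dots,f_{n-1}))$ up to the canonical coherence isomorphisms. This is the same kind of argument that powers \cref{gamma_coherence} and the middle square of \cref{VnVm_square}, so the requisite machinery is already in hand; writing the identification out cleanly just requires some patience with indices.
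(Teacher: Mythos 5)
Your proposal is correct and follows essentially the same route as the paper's proof: lift a class through $\Gamma$ to a representative tuple $(f_0,\dots,f_{n-1})$, observe that $F^nV^n$ applied to $\Gamma(f_0,\dots,f_{n-1})$ block-decomposes over the biproduct \eqref{eq:biprod} into the summands $\Gamma(R^j(f_0,\dots,f_{n-1}))$ (the paper records this as display \eqref{proof:lem:fnvn} specialized to $m=n$), and conclude $F^nV^n[\phi]=\sum_j R^j[\phi]=\tau([\phi])$ by additivity. The only cosmetic difference is that the paper first computes $F^mV^n$ for arbitrary $m$ (used later in \cref{FnVd}) and then specializes to $m=n$, whereas you treat $m=n$ directly.
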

\cref{eg:fnvn} is an illustration of this result in the case $n=3$ and $G=H=\id$.

\begin{proof}
We first consider the case of $F^mV^n$.  

Let $(f_0,\dots ,f_{n-1})\in \End(\mathcal{C};G,H;n,1)$.  The image under \eqref{eq:summing_map}, \eqref{eq:iterate_map_2}, and  \eqref{eq:K_0_compose_1} is 
\[\Gamma(\sigma(f_0,\dots ,f_{n-1})\circ G(t), \ldots , \sigma(f_0,\dots ,f_{n-1})\circ G(t)).\]
Since $G$ and $H$ are strong symmetric monoidal functors (with respect to the monoidal product given by $\oplus$), and $t$ and $GH\cong HG$ are natural transformations, this composite is 
\begin{align}\label{proof:lem:fnvn}
    {G^m\left(\bigoplus_{i=0}^{n-1} c_i\right)}
    \xrightarrow{G^m(t^m)}
     {G^m\left(\bigoplus_{i=0}^{n-1} c_{m+i}\right)}
     \xrightarrow{\sigma \left(\Gamma'\Bigl(
 \underbrace{f_i,f_{i+1},\dots, f_{i+m-1}}_{m\text{ terms}} 
 \Bigr)_{i=0}^{n-1}\right)}
	 {H^m\left(\bigoplus_{i=0}^{n-1} c_i\right).}
\end{align}
The  commutative diagram in \cref{fig:fnvn} shows $F^mV^n([\Gamma(f_0,\dots,f_{n-1})])$ is the class of \eqref{proof:lem:fnvn}.

 If $m$ is multiple of $n$, then $t^m$ is the identity, and $\Gamma'=\Gamma$.  If $m=n,$ then at the level of 
 $\widetilde{K}_0$ the image of $[\Gamma(f_0,f_1,\ldots f_{n-1})]$ under $F^nV^n$ is 
\[
\sum_{i=0}^{n-1}\left[\Gamma(R^i(f_0,\dots, f_{n-1})\right] =\sum_{i=0}^{n-1}R^i\left[\Gamma(f_0,\dots,f_{n-1})\right]
    = \tau(\left[\Gamma(f_0,\dots,f_{n-1})\right]),
\]
where $R$ represents a generator of the $\mathbb{Z}/n$ action on $\widetilde{K}_0(\mathcal{C};G,H;1,n)$ and $\widetilde{K}_0(\mathcal{C};G,H;1,n)$.
\end{proof}

\begin{figure}\begin{tikzcd}
	{\End(\mathcal{C};G,H;1,n)} & {\widetilde{K}_0(\mathcal{C};G,H;1,n)} \\
	{\End(\mathcal{C};G,H;n,1)} & {\widetilde{K}_0(\mathcal{C};G,H;n,1)} \\
	{\End(\mathcal{C};G,H;1,1)} & {\widetilde{K}_0(\mathcal{C};G,H;1,1)} \\
	{\End(\mathcal{C};G,H;m,1)} & {\widetilde{K}_0(\mathcal{C};G,H;m,1)} \\
	{\End(\mathcal{C};G,H;1,m)} & {\widetilde{K}_0(\mathcal{C};G,H;1,m)}
	\arrow[from=1-1, to=1-2]
	\arrow["{\eqref{eq:K_0_compose_1}}", from=2-1, to=1-1]
	\arrow[from=2-1, to=2-2]
	\arrow["{{\mathrm{\eqref{eq:summing_map}}} }"', from=2-1, to=3-1]
	\arrow["{\eqref{eq:uncompose_2}}", from=2-2, to=1-2]
	\arrow["{{\mathrm{\eqref{eq:summing_map_2}}} }"', from=2-2, to=3-2]
	\arrow[from=3-1, to=3-2]
	\arrow["{\mathrm{\eqref{eq:iterate_map_2}}}"', from=3-1, to=4-1]
	\arrow["{{\mathrm{\eqref{eq:iterate_map}}} }"', from=3-2, to=4-2]
	\arrow[from=4-1, to=4-2]
	\arrow["{\eqref{eq:K_0_compose_1}}"', from=4-1, to=5-1]
	\arrow["{\eqref{eq:uncompose_2}}"', from=4-2, to=5-2]
	\arrow[from=5-1, to=5-2]
\end{tikzcd}
\caption{}\label{fig:fnvn}
\end{figure}
\section{Compatibility with trace}
\label{sec:define_ghost}

While the definitions of the Frobenius and Verschiebung maps in \cref{sec:new_maps_frobenius,sec:new_maps_verscheibung} directly generalize those in \cite{ALM1}, in this section we give further justification for our definitions by showing they behave as expected with respect to the \ghost map. The first step in this process is to generalize the \ghost map of (\ref{classical_ghost}).  We focus on the cases of \cref{def;end_examples} and use the bicategorical trace of \cite{ponto}. (This restriction is primarily for exposition -- the results here can be applied when the bicategorical trace is additive \cite{ps:linearity} and $G$ and $H$ are given by bicategorical composition.)  

Let $\mathcal{B}$ be a bicategory with composition of 1-cells denoted $\odot$ and unit 1-cells $U_A$.  See  \cite{leinster} for the full definition of a bicategory.

\begin{definition}\cite[16.4.1]{may2004parametrized}  A 1-cell $P\in \mathcal{B}(A,B)$ is {\bf dualizable} if there is a 1-cell 
$P^*\in \mathcal{B}(B,A)$
and 2-cells 
$\eta\colon U_A\to P\odot P^*$ and $ \epsilon\colon P^*\odot P\to U_B$ such that the triangle diagrams commute.  
\end{definition}

A symmetric monoidal category $\mathcal{C}$ can be regarded as a bicategory with a single 0-cell. In that case, if $P$ is a dualizable object, $\eta$ and $\epsilon$ can be composed using the symmetry isomorphism to define the trace of any endomorphism in $\mathcal{C}$, see \cite{Dold1978-ol}. In a bicategory, this is no longer the case, and we need to impose additional structure.

\begin{definition}\cite[Definition 4.4.1]{ponto}
    Let $\mathcal{B}$ be a bicategory. 
    A \textbf{shadow functor} for $\mathcal{B}$ consists of functors \[\sh{\text{-}}: \mathcal{B}(R, R) \to \mathcal{T}\] for each object $R$ of $\mathcal{B}$ and some fixed category $\mathcal{T}$, together with natural isomorphisms \[\theta: \sh{M \odot N} \overset{\cong}{\to} \sh{N \odot M} \] for $M: R \to S$ and $N: S \to R$ that are compatible with unit and associativity isomorphisms.
\end{definition}

\begin{example}\label{Hochschild}
The objects of the bicategory $\mathcal{M}od/_{\mathcal{R}ing}$ are  (not necessarily commutative) rings with unit. For rings $R,S$, $\mathcal{M}od/_{\mathcal{R}ing}(R,S)$ is the category of $R$-$S$-bimodules. Composition, denoted by $\odot$, is the tensor product over the common ring, with the unit $U_R$ being the $R$-$R$-bimodule $R.$ The dualizable 1-cells in $\mathcal{M}od/_{\mathcal{R}ing}(R,S)$ are precisely the objects of $\text{Mod}_{R,S}^c$. We can define a shadow for this bicategory, $(\sh{\text{-}}, \mathcal{A}b)$, which sends an $R-R$-bimodule $M$ to the abelian group $\sh{M}=M/(rm\sim mr)$.  This  is the 0th Hochschild homology of $M$.
\end{example}

\begin{definition}\cite[Definition 4.5.1]{ponto} 
\label{defn:bicat_trace}
    Let $\mathcal{B}$ be a bicategory with a shadow functor and $M$ a dualizable 1-cell of $\mathcal{B}$. The \textbf{bicategorical trace} of a 2-cell $f: Q\odot M \Rightarrow M \odot P$ is the composite: 
    \[\xymatrix{\sh{Q} \ar[r]^-{\sh{\id\odot \eta}} & \sh{Q \odot M \odot M^\ast} \ar[r]^-{\sh{f\odot \id}} & \sh{M \odot P \odot M^\ast} \ar[r]^\theta & \sh{M^\ast \odot M\odot P} \ar[r]^-{\sh{\epsilon\odot \id}} & \sh{P}}.\]
\end{definition}

The composition of the Frobenius map and the bicategorical trace of \cref{Hochschild} yields a function: 
\begin{equation}\label{eq:trace_as_functor}
\begin{aligned}
    \ob(\End(R,S;M,N))&\to \Map(\sh{M^{\odot n}}, \sh{N^{\odot n}})
    \\
    M\odot P\xto{f} P\odot M&\mapsto \tr(F^n(f))
\end{aligned}
\end{equation}
for each natural number $n$.
The bicategorical trace is additive under additional assumptions on $\mathcal{B}$ and $\mathcal{T}$ \cite{ps:linearity}, but in \cref{ap:add} we give a more direct proof for the function in \eqref{eq:trace_as_functor}.

\begin{restatable}{cor}{restatedefntraceequiv}
\label{lem:defn_trace_equiv}
The functions in \eqref{eq:trace_as_functor} induce homomorphisms,
\[\tr_n\colon \widetilde{K}_0(R,S;M,N)\to \Map(\sh{M^{\odot n}}, \sh{N^{\odot n}})\]
where the mapping space is taken in the category of abelian groups.
\end{restatable}

\begin{definition}\label{def:ghost}
    The {\bf \ghost map} \[\tr_\bullet \colon \widetilde{K}_0(R,S;M,N)\to \prod_{n\geq 1}\Map(\sh{M^{\odot n}}, \sh{N^{\odot n}})\]
    is the product of the homomorphisms defined in \cref{lem:defn_trace_equiv}.
\end{definition}

\subsection{Equivariance and the trace} \label{subsec:equiv_trace}
  For composable 1-cells $M_0,M_1,\ldots, M_{n-1}$, the cyclicity  isomorphism $\theta$ and associators define 
an isomorphism $\varsigma_{M_0,M_1,\ldots, M_{n-1}}$
\[  \sh{M_0\odot (M_1\odot (M_2\odot  \cdots\odot (M_{n-2}\odot  M_{n-1}))\cdots )}\to \sh{M_1\odot (M_2\odot (M_3\odot  \cdots\odot (M_{n-1}\odot  M_{0}))\cdots )}.\]
By \cite[Theorem 6.4]{mp:coherence}, the composition of \( n \) such rotations is the identity:
\[\varsigma_{ M_{n-1},M_0,M_1, \ldots,M_{n-2} }\circ \cdots \circ \varsigma_{M_1,\ldots, M_{n-1},M_0}\circ\varsigma_{M_0,M_1,\ldots, M_{n-1}}=\id.\] 
 In particular, if $M=M_0=M_1=\cdots =M_{n-1}$, 
$\varsigma_M: =\varsigma_{M,M,\ldots, M}$
is the action of the generator for a $\mathbb{Z}/n$ action on $\sh{
M^{\odot n}}$. 

These actions induce an action on the mapping space
\(
\Map\left(\sh{M^{\odot n}}, \sh{N^{\odot n}}\right),
\)
defined for \( \varsigma_-^{i} \in \mathbb{Z}/n \) and \( f \colon \sh{M^{\odot n}} \to \sh{N^{\odot n}} \) by:
\begin{equation} \label{Action_on_trace}
\sh{M^{\odot n}} \xrightarrow{{\varsigma_M^{-i}}} \sh{M^{\odot n}} \xrightarrow{f} \sh{N^{\odot n}} \xrightarrow{\varsigma_N^i} \sh{N^{\odot n}}.
\end{equation}
The fixed points of this action are exactly the \( \mathbb{Z}/n \)-equivariant maps:
\[
\Map\left(\sh{M^{\odot n}}, \sh{N^{\odot n}}\right)^{\mathbb{Z}/n} 
=
\Map_{\mathbb{Z}/n}\left(\sh{M^{\odot n}}, \sh{N^{\odot n}}\right).
\]

\begin{lemma}\label{gamma_trace} The composite
\[\widetilde{K}_0(\Mod_{R,S}^c; M\odot-,-\odot N;n,1)\xrightarrow{\mathrm{\eqref{eq:uncompose_2}}} \widetilde{K}_0(R,S;M^{\odot n},N^{\odot n})\xrightarrow{\tr_1}\Map(\sh{M^{\odot n}},\sh{N^{\odot n}})\]
is equivariant.
\end{lemma}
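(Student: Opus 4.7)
The plan is to verify equivariance on representatives of classes in $\widetilde{K}_0(\Mod_{R,S}^c; M\odot -, -\odot N; n, 1)$ and then descend to reduced $K$-theory, since all the functors involved are exact and compatible with the inclusion from $(\Mod_{R,S}^c)^{\times n}$. Fix an object $(f_0,\ldots,f_{n-1})$ with $f_j\colon M\odot P_{j+1}\to P_j\odot N$ and $P_n=P_0$. Under $\Gamma$ this is sent to the composite
\[\Gamma(f_0,\ldots,f_{n-1})\colon M^{\odot n}\odot P_0\to P_0\odot N^{\odot n},\]
and the generator of the $\mathbb{Z}/n$ action takes it to $\Gamma(f_1,\ldots,f_{n-1},f_0)\colon M^{\odot n}\odot P_1\to P_1\odot N^{\odot n}$. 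The task therefore reduces to establishing the pointwise identity
\[\tr\bigl(\Gamma(f_1,\ldots,f_{n-1},f_0)\bigr) \;=\; \varsigma_N\circ \tr\bigl(\Gamma(f_0,\ldots,f_{n-1})\bigr)\circ \varsigma_M^{-1}\]
in $\Map(\sh{M^{\odot n}},\sh{N^{\odot n}})$.

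To prove this, I would unfold \cref{defn:bicat_trace} for the right-hand composite using $P_0$ as the dualizable 1-cell, and for the left-hand composite using $P_1$. The key observation is that the two sides differ only in which $P_j$ plays the role of the ``middle 1-cell'' being dualized. The bridge between them is the cyclicity of the bicategorical trace: for a 2-cell that factors as $g\circ (A\odot f)$ where $A$ is dualizable, one can ``move'' $f$ from one side of $g$ to the other at the cost of applying the shadow cyclicity isomorphism $\theta$ on the source and target shadows. Applying this principle to the factorization $\Gamma(f_0,\ldots,f_{n-1}) = (f_0\odot N^{\odot n-1})\circ (M\odot \Gamma(f_1,\ldots,f_{n-1}))$ peels off exactly one factor of $M$ from the source and one factor of $N$ from the target, producing a single rotation on each.

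The main obstacle is bookkeeping: the trace of a length-$n$ composite involves many instances of $\theta$ entangled with the triangle identities for the duals $P_j^*$, and a priori it is not clear that the net rotation produced is precisely $\varsigma_M^{-1}$ on the left and $\varsigma_N$ on the right, rather than some other composite of $\theta$'s realizing the same permutation. I would handle this using the coherence theorem for shadows \cite[Thm.~6.4]{mp:coherence}, invoked in exactly the form already used in \S\ref{subsec:equiv_trace} to define $\varsigma_M$ and $\varsigma_N$: any two composites of associators and cyclicity isomorphisms realizing the same cyclic permutation of $n$ copies of $M$ (resp.\ $N$) are equal. This guarantees that the rotations obtained from the cyclicity argument on the trace are exactly $\varsigma_M^{-1}$ and $\varsigma_N$, yielding the required identity. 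Since the identity holds for each object, and $\tr_1$ is a homomorphism by \cref{lem:defn_trace_equiv}, it passes to $\widetilde{K}_0$, completing the proof.
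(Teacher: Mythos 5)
Your proof is correct and uses essentially the same approach as the paper: reduce to representatives, and establish the needed square relating $\tr(\Gamma(f_0,\ldots,f_{n-1}))$ and $\tr(\Gamma(f_1,\ldots,f_{n-1},f_0))$ via $\varsigma_M$ and $\varsigma_N$ by cyclicity of the bicategorical trace applied to the factorization $\Gamma(f_0,\ldots,f_{n-1})=(f_0\odot N^{\odot n-1})\circ(M\odot\Gamma(f_1,\ldots,f_{n-1}))$. The paper simply invokes this as the ``tightening'' property \cite[Proposition 7.1]{Ponto_2012} in a single step, rather than re-deriving it from the definition of the trace plus coherence for shadows; since that proposition already supplies the square directly, the bookkeeping you worry about (unwinding the length-$n$ composite and its $\theta$'s and $P_j^*$'s) never actually arises.
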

\begin{proof}
Let $(f_0,\dots, f_{n-1})$ be an object of $\End(\Mod_{R,S}^c; M\odot-,-\odot N;n,1).$ By the ``tightening" property of the trace \cite[Proposition 7.1]{Ponto_2012} we have that
\[\begin{tikzcd}
	{\sh{M^{\odot n}}} &[40pt] {\sh{N^{\odot n}}} \\
	{\sh{M^{\odot n}}} & {\sh{N^{\odot n}}}
	\arrow["{\tr ([\Gamma(f_0,f_1,\dots,f_{n-1})])}", from=1-1, to=1-2]
	\arrow["{\varsigma_M}"', from=1-1, to=2-1]
	\arrow["{\varsigma_N}", from=1-2, to=2-2]
	\arrow["{\tr ([\Gamma (f_1,\dots, f_{n-1},f_0)])}"', from=2-1, to=2-2]
\end{tikzcd}\]
commutes. Thus, if we denote the preferred generator of $\mathbb{Z}/n$  by $R$, 
\[R \tr([\Gamma(f_0,f_1,\dots, f_{n-1})])=\tr([\Gamma (f_1,\dots, f_{n-1},f_0)])=\tr([\Gamma (R(f_0,f_1,\dots, f_{n-1}))).    \qedhere\]
\end{proof}

    By \cref{gamma_trace}, for $[f]\in \widetilde{K}_0(R,S;M,N),$ $\tr ([ \Gamma(f,\dots, f)])\colon\sh{M^{\odot n}}\to \sh{N^{\odot n}}$ is a fixed point of the action of $\mathbb{Z}/n$ on $\Map (\sh{M^{\odot n}},\sh{N^{\odot n}}),$ i.e., it is equivariant and we have the following statement. 
\begin{cor}\label{equivariant_trace}
    The map $\tr_n$ factors through the inclusion 
    \[\Map_{\mathbb{Z}/n}(\sh{M^{\odot n}}, \sh{N^{\odot n}})
    \to \Map(\sh{M^{\odot n}}, \sh{N^{\odot n}}).\]
\end{cor}

\cref{equivariant_trace} and \eqref{eq:trace_as_functor},  define homomorphisms 
\[\tr_n\colon \widetilde{K}_0(R,S;M,N)\to \Map_{\mathbb{Z}/n}(\sh{M^{\odot n}}, \sh{N^{\odot n}}),\]
and 
\begin{equation} \label{equivariant_ghost}
\tr_\bullet \colon \widetilde{K}_0(R,S;M,N)\to \prod_{n\geq 1}\Map_{\mathbb{Z}/n}(\sh{M^{\odot n}}, \sh{N^{\odot n}}).
\end{equation}

\subsection{Compatibility of Frobenius and trace}
Let \[\phi_{n,i}\colon  {\Map_{\mathbb{Z}/{ni}}(\sh{N^{\odot ni}}, \sh{M^{\odot ni}})}\to  {\Map_{\mathbb{Z}/i}(\sh{(N^{\odot n})^{\odot i}}, \sh{(M^{\odot n})^{\odot i}}})\]
be the map that
restricts the action along the inclusion 
$\mathbb{Z}/i\to \mathbb{Z}/{ni}$.
Let 
\[\Phi_n\colon {\prod_{i\geq 1}\Map_{\mathbb{Z}/{i}}(\sh{N^{\odot i}}, \sh{M^{\odot i}})} 
\to {\prod_{i\geq 1}\Map_{\mathbb{Z}/i}(\sh{(N^{\odot n})^{\odot i}}, \sh{(M^{\odot n})^{\odot i}})}\]
be the product of the maps 
\[{\prod_{i\geq 1}\Map_{\mathbb{Z}/{i}}(\sh{N^{\odot i}}, \sh{M^{\odot i}})}
\xrightarrow{\mathrm{proj}_{ni}} \Map_{\mathbb{Z}/{ni}}(\sh{N^{\odot ni}}, \sh{M^{\odot ni}})
\xrightarrow{\phi_{n,i}} {\Map_{\mathbb{Z}/i}(\sh{(N^{\odot n})^{\odot i}}, \sh{(M^{\odot n})^{\odot i}})}.\]
Intuitively, 
$\Phi_n(f_1, f_2,f_3, \ldots ) = (f_n, f_{2n}, f_{3n}, \ldots) $.

\begin{thm}\label{thm:frobeius_ghost}
For $n\geq 1$, the following diagram commutes.
\[\begin{tikzcd}
	{\widetilde{K}_0(R,S;M,N)} & {\displaystyle \prod_{i\geq 1}\Map_{\mathbb{Z}/i}(\sh{M^{\odot i}}, \sh{N^{\odot i}})} \\
	{\widetilde{K}_0(R,S;M^{\odot n},N^{\odot n})} & {\displaystyle \prod_{i\geq 1}\Map_{\mathbb{Z}/i}(\sh{(M^{\odot n})^{\odot i}}, \sh{(N^{\odot n})^{\odot i}})}
	\arrow["{\tr_{\bullet}}", from=1-1, to=1-2]
	\arrow["{F^n}", from=1-1, to=2-1]
	\arrow["{\Phi_n}", from=1-2, to=2-2]
	\arrow["{\tr_\bullet}", from=2-1, to=2-2]
\end{tikzcd}\]
\end{thm}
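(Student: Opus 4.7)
The plan is to unpack both paths in the square on a class $[f] \in \widetilde{K}_0(R,S;M,N)$ and compare them component by component. Fix $i \geq 1$; it suffices to show that the $i$-th component of $\tr_\bullet(F^n[f])$ agrees with $\phi_{n,i}$ applied to the $ni$-th component of $\tr_\bullet([f])$, including the equivariance data.

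The $i$-th component of $\tr_\bullet(F^n[f])$ is $\tr_i(F^n[f])$, which by the definition of \cref{def:ghost} equals $\tr(F^i(F^n[f]))$, where the outer $F^i$ is the $i$-Frobenius for the endomorphism category over $M^{\odot n}, N^{\odot n}$. By \cref{prop:iterate_frobenius}, $F^i \circ F^n = F^{ni}$, so modulo the canonical associator identifying $(M^{\odot n})^{\odot i}$ with $M^{\odot ni}$ (and likewise for $N$), we have $F^i(F^n[f]) = F^{ni}([f])$. Applying the trace then gives
\[\tr_i(F^n[f]) = \tr(F^{ni}([f])) = \tr_{ni}([f]),\]
which is exactly the underlying map of $\phi_{n,i}(\tr_{ni}([f]))$. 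Equivalently, one can run the computation by invoking \cref{prop:compute_frobenius} twice and then using \cref{gamma_coherence} to collapse a nested $\Gamma$ of $\Gamma$'s into a single $\Gamma$ on $ni$ copies of $f$.

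It remains to verify that the $\mathbb{Z}/i$-equivariance matches. The target $\mathbb{Z}/i$-action on $\Map(\sh{(M^{\odot n})^{\odot i}}, \sh{(N^{\odot n})^{\odot i}})$ is conjugation by $\varsigma_{M^{\odot n}}$ and $\varsigma_{N^{\odot n}}$, whereas $\phi_{n,i}$ restricts the $\mathbb{Z}/ni$-action along the inclusion $\mathbb{Z}/i \hookrightarrow \mathbb{Z}/ni$ sending the generator to $n$. By the coherence result \cite[Theorem 6.4]{mp:coherence}, $\varsigma_{M^{\odot n}}$ on $\sh{(M^{\odot n})^{\odot i}}$ corresponds to $\varsigma_M^n$ on $\sh{M^{\odot ni}}$ under the associator identification, and similarly for $N$. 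Hence both actions agree on the common underlying map, and \cref{equivariant_trace} (together with \cref{gamma_trace}) packages everything so that the square commutes.

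The principal obstacle is essentially bookkeeping: one must carefully track the associator identifications $(M^{\odot n})^{\odot i} \cong M^{\odot ni}$ and verify that the iterated shadow rotation $\varsigma_{M^{\odot n}}$ genuinely corresponds to $\varsigma_M^n$ rather than to a twisted variant. The needed coherence is already furnished by \cite{mp:coherence} and by the coherence of $\Gamma$ established in \cref{gamma_coherence}, so once the identifications are made precise the commutativity is formal.
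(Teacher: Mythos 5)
Your proposal is correct and takes essentially the same approach as the paper: project to the $i$-th factor, unwind $\tr_i$ as $\tr \circ F^i$, and invoke \cref{prop:iterate_frobenius} to collapse $F^i \circ F^n$ into $F^{ni}$. The paper presents this as a large commuting diagram while you run the same logic as a chain of equalities, and you additionally make explicit the coherence fact (that $\varsigma_{M^{\odot n}}$ matches $\varsigma_M^n$ under the associator) that the paper leaves implicit in the definition of $\phi_{n,i}$.
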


\begin{proof}
    It is enough to prove that the diagram commutes after projecting to each of the factors in the product at the bottom right corner.
\[\begin{tikzcd}[column sep=-.25in, row sep =0.15in]
	{\widetilde{K}_0(R,S;M,N)} && {\widetilde{K}_0(R,S;M^{\odot ni},N^{\odot ni})} \\
	& {\prod_{i\geq 1}\Map_{\mathbb{Z}/i}(\sh{M^{\odot i}}, \sh{N^{\odot i}})} \\
	{\widetilde{K}_0(R,S;M^{\odot n},N^{\odot n})} && {\Map_{\mathbb{Z}/ni}(\sh{M^{\odot ni}}, \sh{N^{\odot ni}})} \\
	& {\prod_{i\geq 1}\Map_{\mathbb{Z}/i}(\sh{(M^{\odot n})^{\odot i}}, \sh{(N^{\odot n})^{\odot i}})} \\
	{\widetilde{K}_0(R,S;(M^{\odot n})^{\odot i},(N^{\odot n})^{\odot i})} && {\Map_{\mathbb{Z}/i}(\sh{(M^{\odot n})^{\odot i}}, \sh{(M^{\odot n})^{\odot i}})}
	\arrow["{F^{ni}}", from=1-1, to=1-3]
	\arrow["{\tr_{\bullet}}", from=1-1, to=2-2]
	\arrow["{F^n}", from=1-1, to=3-1]
	\arrow["\tr_1", from=1-3, to=3-3]
	\arrow["{\mathrm{proj}_{ni}}", from=2-2, to=3-3]
	\arrow["{\Phi_n}", from=2-2, to=4-2]
	\arrow["{\tr_\bullet}", from=3-1, to=4-2]
	\arrow["{F^i}"', from=3-1, to=5-1]
	\arrow["{\phi_{n,i}}", from=3-3, to=5-3]
	\arrow["{\mathrm{proj}_{i}}", from=4-2, to=5-3]
	\arrow["\tr_1"', from=5-1, to=5-3]
\end{tikzcd}\]

The triangles commute by definition of $\tr_\bullet,$ and the right square commutes by definition of $\Phi_n$.  The outside square commutes by definition of $\phi_{n,i}$ and \cref{prop:iterate_frobenius}.
\end{proof}

\subsection{Compatibility of Verschiebung and trace}
For $\mathbb{Z}/k \subseteq \mathbb{Z}/nk$, the action in \eqref{Action_on_trace} satisfies 
\[\left(\Map(\sh{(M^{\odot n})^{\odot k}}, \sh{(N^{\odot n})^{\odot k}})\right)^{\mathbb{Z}/k}=
\Map_{\mathbb{Z} /k}(\sh{(M^{\odot n})^{\odot k}}, \sh{(N^{\odot n})^{\odot k}})\] 
and so the map $\tau$ in \cref{def:transfer} defines a homomorphism 
\[\tau\colon 
\Map_{\mathbb{Z} /k}(\sh{(M^{\odot n})^{\odot k}}, \sh{(N^{\odot n})^{\odot k}})\to 
\Map_{\mathbb{Z} /nk}(\sh{M^{\odot nk}}, \sh{N^{\odot nk}}).\]

The following is a generalization of \cite[Lemma 6.2]{facets}. 

\begin{lemma}\label{FnVd} 
 If $d|n,$ the following diagram commutes.
\[\begin{tikzcd}
	{\widetilde{K_0}(R,S;M^{\odot d},N^{\odot d})} & {\widetilde{K_0}(R,S;(M^{\odot n},N^{\odot n})} \\
	{\widetilde{K_0}(R,S;M,N)} & {\Map_{\mathbb{Z}/\frac{n}{d}}(\sh{(M^{\odot d})^{\odot n/d}},\sh{(N^{\odot d})^{\odot n/d}})} \\
	{\widetilde{K_0}(R,S;M^{\odot n},N^{\odot n})^{\mathbb{Z}/n}} & {\Map_{\mathbb{Z}/n}(\sh{M^{\odot n}},\sh{N^{\odot n}}).}
	\arrow["{F^{n/d}}", from=1-1, to=1-2]
	\arrow["{V^d}"', from=1-1, to=2-1]
	\arrow["\tr", from=1-2, to=2-2]
	\arrow["{F^n}"', from=2-1, to=3-1]
	\arrow["\tau", from=2-2, to=3-2]
	\arrow["\tr"', from=3-1, to=3-2]
\end{tikzcd}\]
If $d\nmid n,$ the composite
\[\widetilde{K}_0(R,S;M^{\odot d},N^{\odot d})\xrightarrow{V^d}\widetilde{K}_0(R,S;M,N)\xrightarrow{F^n}\widetilde{K}_0(R,S;M^{\odot n},N^{\odot n})\xrightarrow{\tr_1}\Map_{\mathbb{Z}/n}(\sh{M^{\odot n}},\sh{N^{\odot n}})\]
is zero.
\end{lemma}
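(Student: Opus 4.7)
For the case $d\mid n$, the plan is to factor and identify using \cref{prop:iterate_frobenius} and \cref{lem:fnvn}. Set $k=n/d$ and factor $F^n = F^{n/d}\circ F^d$. Applying \cref{lem:fnvn} (with the role of $n$ played by $d$) identifies $F^d V^d$ with the $K$-theoretic transfer $\tau_K = \sum_{i=0}^{d-1} R_d^i$, where $R_d$ generates the $\mathbb{Z}/d$-action of \cref{lem:action} on $\widetilde{K}_0(R,S;M^{\odot d},N^{\odot d})$. Thus
\[\tr_1\circ F^n\circ V^d \;=\; \sum_{i=0}^{d-1}\tr_1\circ F^{n/d}\circ R_d^i.\]
The next step is to verify the intertwining $F^{n/d}(R_d x) = R_n F^{n/d}(x)$: by \cref{prop:compute_frobenius} and the coherence \cref{gamma_coherence}, if $x=[\Gamma(f_0,\dots,f_{d-1})]$ then $F^{n/d}(x)=[\Gamma(f_0,\dots,f_{d-1},\,f_0,\dots,f_{d-1},\,\dots)]$ is the length-$n$ concatenation of $k$ copies, and cyclically rotating this tuple by one is the same whether applied before or after $F^{n/d}$. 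Combining with \cref{gamma_trace} applied at the $n$-level gives $\tr_1(F^{n/d}(R_d^i x)) = \varsigma_n^i\cdot\tr_1(F^{n/d}(x))$. Since $\{0,1,\dots,d-1\}$ is a set of coset representatives for $\mathbb{Z}/(n/d)\subseteq\mathbb{Z}/n$, the sum assembles to $\tau(\tr_1 F^{n/d}(x))$, matching the composite on the right side of the diagram.

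For the case $d\nmid n$, the plan is to compute $F^n V^d(x)$ explicitly at the endomorphism level and show that its trace vanishes. Representing $x=[\Gamma(f_0,\dots,f_{d-1})]$ and unwinding via the formula derived in the proof of \cref{lem:fnvn} (with the roles of $m$ and $n$ there played by our $n$ and $d$), the composite $F^n V^d(x)$ is represented by
\[\sigma\bigl(\Gamma(f_i, f_{i+1},\dots, f_{i+n-1})_{i=0}^{d-1}\bigr)\circ G^n(t^n),\]
an endomorphism on $\bigoplus_{j=0}^{d-1} c_j$ (indices mod $d$), where $G=M\odot-$. Viewed as a $d\times d$ block matrix indexed by source and target summands, the $(k,j)$-block is nonzero only when $k\equiv j - n\pmod d$: the rotation $G^n(t^n)$ shifts source summand positions by $n$ mod $d$, while $\sigma$ keeps each middle summand on its own diagonal. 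Since $d\nmid n$, no $(j,j)$-block is nonzero.

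The main obstacle, in the $d\nmid n$ case, is converting this vanishing of diagonal blocks into vanishing of the bicategorical trace. For this I would invoke additivity of the bicategorical trace on the direct sum $\bigoplus c_j$ (\cref{ap:add}): this expresses $\tr_1(F^n V^d(x)) = \sum_{j=0}^{d-1} \tr\bigl((F^nV^d(x))_{jj}\bigr)$, where the restriction $(F^nV^d(x))_{jj}\colon M^{\odot n}\odot c_j \to c_j\odot N^{\odot n}$ is zero by the block computation above. Every summand vanishes, so $\tr_1(F^n V^d(x))=0$, completing the proof.
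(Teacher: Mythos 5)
Your proposal is correct.

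For the $d\nmid n$ case your argument coincides with the paper's: compute $F^nV^d$ explicitly from the formula established in the proof of \cref{lem:fnvn}, observe that $G^n(t^n)$ shifts the $j$-th summand to position $j-n \pmod d$ so that every diagonal block vanishes when $d\nmid n$, and conclude from additivity of the trace (\cref{lem:sum_of_traces}) that $\tr_1(F^nV^d(x))=0$.

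For the $d\mid n$ case your route is organized differently from the paper's, though the underlying computation is close. The paper unwinds $F^nV^d$ directly via the explicit formula \eqref{proof:lem:fnvn} to get \eqref{eq:vdfn}, then applies the trace-level additivity theorem \cref{prop:add_traces_naive} to split it as a sum of $d$ traces, and finally uses the rotation action (\cref{lem:action}), trace equivariance (\cref{gamma_trace}), and coherence (\cref{gamma_coherence}) to identify the sum with the transfer. You instead factor $F^n=F^{n/d}F^d$, invoke \cref{lem:fnvn} (with $d$ in place of $n$) to replace $F^dV^d$ by the $K$-theoretic transfer $\sum_{i=0}^{d-1}R_d^i$, and then move the rotations past $F^{n/d}$ using the intertwining $F^{n/d}R_d = R_nF^{n/d}$ before applying \cref{gamma_trace}. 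This reuses \cref{lem:fnvn} as a black box and avoids a second appeal to trace additivity in the $d\mid n$ branch, at the modest cost of verifying the intertwining relation and checking that $\{0,\dots,d-1\}$ is a transversal for $\mathbb{Z}/(n/d)\subseteq\mathbb{Z}/n$; the paper's approach is more self-contained in that it recomputes the representative explicitly. Both arrive at the same expression $\sum_{j=0}^{d-1}\varsigma^j\,\tr_1(F^{n/d}[\Gamma(f_0,\dots,f_{d-1})])=\tau(\tr_1 F^{n/d}(x))$.
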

\begin{proof}
Suppose $d|n$ and $f_j:M\odot c_{j+1}\to c_j\odot N.$ Using \eqref{proof:lem:fnvn}, $F^{n}V^d([\Gamma(f_0,f_1,\dots, f_{d-1})])$ is
\begin{equation}\label{eq:vdfn}
\sigma\Biggl(\biggl(\Gamma(\underbrace{f_j,f_{j+1},\dots, f_{j+d-1},f_j,f_{j+1},\dots,f_{j+d-1},f_j,\dots,f_{j+d-1}}_{\text{tuple }(f_j,\dots,f_{j+d-1})\text{ repeated}\frac{n}{d}\text{ times}})\biggr)_{j=0}^{d-1} \Biggr)
\end{equation}
Then 
\begin{align*}
    \tr\eqref{eq:vdfn}&\xlongequal{\mathrm{\cref{prop:add_traces_naive}}}\sum_{j=0}^{d-1}\tr [\Gamma (\underbrace{f_j,f_{j+1},\dots, f_{j+d-1},f_j,f_{j+1},\dots,f_{j+d-1},f_j,\dots,f_{j+d-1}}_{\frac{n}{d}\text{ times}})]\\
    &\xlongequal{\text{\cref{lem:action}}}\sum_{j=0}^{d-1}\tr  [R^j\Gamma(\underbrace{f_0,f_1,\dots, f_{d-1},f_0,f_1,\dots,f_{d-1},\dots,f_{d-1}}_{\frac{n}{d}\text{ times}})]
    \\
        &\xlongequal{\text{\cref{gamma_trace}}}\sum_{j=0}^{d-1}R^j\tr  [\Gamma(\underbrace{f_0,f_1,\dots, f_{d-1},f_0,f_1,\dots,f_{d-1},\dots,f_{d-1}}_{\frac{n}{d}\text{ times}})]
        \\
        &\xlongequal{\text{\cref{gamma_coherence}}}\sum_{j=0}^{d-1}R^j\tr  [ F^{n/d}\Gamma(f_0,f_1,\dots, f_{d-1})].
    \end{align*}
    
    Suppose that $d\nmid n$ and $f_j:M\odot c_{j+1}\to c_j\odot N.$ By \eqref{proof:lem:fnvn}, 
    \[F^nV^d([\Gamma(f_0,\dots,f_{d-1}) ])=\left[\sigma\left( \Gamma '(f_k,f_{k+1},\dots, f_{k+n-1})_{k=0}^{d-1}\right)\circ (\id_{M^{\otimes n}}\odot t^n)\right].\]
    For $j=0,1\dots, n-1,$ if $l_j,\pi_j$ denote the canonical inclusion and projection respectively, the following diagram commutes and the left composite is zero.
\[\begin{tikzcd}
	& {M^{\odot n}\odot c_j} \\
	{M^{\odot n}\odot (\bigoplus_{i=0}^{d-1} c_i)} && {\bigoplus_{i=0}^{d-1} (M^{\odot n}\odot c_i)} \\
	{M^{\odot n}\odot (\bigoplus_{i=0}^{d-1}c_{n+i})} && {\bigoplus_{i=0}^{d-1} (M^{\odot n}\odot c_{n+i})} \\
	\\
	{(\bigoplus_{i=0}^{d-1} c_i)\odot N^{\odot n}} && {\bigoplus_{i=0}^{d-1} (c_i\odot N^{\odot n})} \\
	& {c_j\odot N^{\odot n}}
	\arrow["{\id_{M^{\odot n}}\odot l_j}"', from=1-2, to=2-1]
	\arrow["{l_j}", from=1-2, to=2-3]
	\arrow[from=2-1, to=2-3]
	\arrow["{\id_{M^{\odot n}}\odot t^n}"', from=2-1, to=3-1]
	\arrow["{t^n}"', from=2-3, to=3-3]
	\arrow[from=3-1, to=3-3]
	\arrow["{\sigma\biggl(\Gamma '\biggl(\underbrace{f_k,f_{k+1},\ldots,f_{k+n-1}}_{n\text{ terms}}\biggr)_{k=0}^{d-1}\biggr)}"', from=3-1, to=5-1]
	\arrow["{\displaystyle\bigoplus_{k=0}^{d-1}\Gamma '\biggl(\underbrace{f_k,f_{k+1},\ldots,f_{k+n-1}}_{n\text{ terms}}\biggr)}"', from=3-3, to=5-3]
	\arrow[from=5-1, to=5-3]
	\arrow["{\pi_j\odot \id_{N^{\odot n}}}"', from=5-1, to=6-2]
	\arrow["{\pi_j}", from=5-3, to=6-2]
\end{tikzcd}\]
 By \cref{lem:sum_of_traces},  $\tr(F^nV^d([\Gamma(f_0,\dots,f_{d-1})]))=0$.
\end{proof}
Let $$B_n\colon \displaystyle{\prod_{i\geq 1}\Map_{\mathbb{Z}/i}(\sh{(M^{\odot n})^{\odot i}},\sh{(N^{\odot n})^{\odot i}})\to \prod_{i\geq 1}}\Map_{\mathbb{Z}/i}(\sh{M^{\odot i}},\sh{N^{\odot i}})$$ be the product of the following maps:
\begin{itemize}
    \item 
If $i=nk,$
\[\resizebox{\textwidth}{!}{$\prod_{i\geq 1}\Map_{\mathbb{Z}/i}(\sh{(M^{\odot n})^{\odot i}}, \sh{(N^{\odot n})^{\odot i}})
\xto{\text{proj}_k}
\Map_{\mathbb{Z} /k}(\sh{(M^{\odot n})^{\odot k}}, \sh{(N^{\odot n})^{\odot k}})
\xto{\tau}
\Map_{\mathbb{Z}/i}(\sh{M^{\odot i}}, \sh{N^{\odot i}})$}
\]
\item 
If $k\nmid i$,
$\prod_{i\geq 1}\Map(\sh{(M^{\odot n})^{\odot i}}, \sh{(N^{\odot n})^{\odot i}})
\xrightarrow{0} 
\Map_{\mathbb{Z}/i}(\sh{M^{\odot i}}, \sh{N^{\odot i}}).
$
\end{itemize}

Intuitively
\[
B_{n}\bigl((f_i)_{i \ge 1}\bigr)
  \;=\;
  \bigl(\psi_j\bigr)_{j\ge 1},
  \quad
  \text{where}
  \quad
  \psi_j
  \;=\;
  \begin{cases}
    \tau\left(  f_{\frac{j}{n}}\right), & \text{if } n \mid j\\[6pt]
    0, & \text{otherwise.}
  \end{cases}
\]

\begin{thm}\label{thm:v_ghost}The following diagram commutes.
\[\begin{tikzcd}
	{\widetilde{K}_0(R,S;M^{\odot n},N^{\odot n})} & {\displaystyle{\prod_{i\geq 1}\Map_{\mathbb{Z}/i}(\sh{(M^{\odot n})^{\odot i}}, \sh{(N^{\odot n})^{\odot i}})}} \\
	{\widetilde{K}_0(R,S;M.N)} & {\displaystyle{\prod_{i\geq 1}\Map_{\mathbb{Z}/i}(\sh{M^{\odot i}}, \sh{N^{\odot i}})}.}
	\arrow["{\tr_\bullet}", from=1-1, to=1-2]
	\arrow["{V^n}"', from=1-1, to=2-1]
	\arrow["{B_n}", from=1-2, to=2-2]
	\arrow["{\tr_{\bullet}}"', from=2-1, to=2-2]
\end{tikzcd}\]
\end{thm}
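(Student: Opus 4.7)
The plan is to reduce the claim to a componentwise verification that directly invokes \cref{FnVd}. Since a map into a product is determined by its projections, it suffices to fix $j\geq 1$ and check that the $j$-th components of $\tr_\bullet \circ V^n$ and $B_n\circ \tr_\bullet$ agree.

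By the definition of $\tr_\bullet$ and of $\tr_j$ in \cref{lem:defn_trace_equiv}, the $j$-th factor of $\tr_\bullet$ is $\tr_j = \tr_1\circ F^j$. Thus the $j$-th component of the clockwise composite is
\[
\tr_1 \circ F^j \circ V^n \colon \widetilde{K}_0(R,S;M^{\odot n},N^{\odot n}) \to \Map_{\mathbb{Z}/j}(\sh{M^{\odot j}},\sh{N^{\odot j}}).
\]
Unpacking the definition of $B_n$, its $j$-th component is $\tau\circ \tr_1\circ F^{k}$ when $j = nk$, and the zero map when $n\nmid j$.

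Both cases are then an instance of \cref{FnVd} after a relabeling of variables: in the statement of the lemma, take the role of $d$ to be our $n$ and the role of the lemma's $n$ to be our $j$, so that $n/d$ in the lemma becomes $k$. When $n\mid j$, the commutative square of \cref{FnVd} becomes precisely $\tr_1\circ F^{j}\circ V^n = \tau\circ \tr_1\circ F^{k}$, which is the $j$-th component of $B_n\circ \tr_\bullet$. When $n\nmid j$, the second half of \cref{FnVd} gives $\tr_1\circ F^j\circ V^n = 0$, matching the zero map built into $B_n$.

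The only real hurdle is keeping the indexing straight between the statement of \cref{thm:v_ghost} (where $n$ is the Verschiebung parameter and $j$ is the product index) and \cref{FnVd} (where $d$ plays the role of the Verschiebung parameter while $n$ is the Frobenius index); once this dictionary is set the proof is immediate, with no further computation required.
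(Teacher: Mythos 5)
Your proposal is correct and takes essentially the same approach as the paper: project to each factor, unwind $\tr_\bullet$ and $B_n$ componentwise, and observe that the $j$-th component identity is exactly \cref{FnVd} under the relabeling $d\leadsto n$, $n\leadsto j$ (so $n/d\leadsto k$). The paper displays this as a large commutative diagram where the outer square is the \cref{FnVd} square and the inner cells commute by the definitions of $\tr_\bullet$ and $B_n$; your version compresses the same content into equations, with the same one-line dispatch of the $n\nmid j$ case by the second half of \cref{FnVd}.
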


\begin{proof}
First suppose that $i=nk$ and project to $\Map_{\mathbb{Z}/i}\Map (\sh{M^{\odot i}},\sh{N^{\odot i}}).$ In the following diagram 
the triangles commute by definition of $\tr_{\bullet},$ the right square by definition of $B_n$, and the outer square by \cref{FnVd}.
\[\begin{tikzcd}[column sep = 0in]
	{\widetilde{K_0}(R,S;M^{\odot n};N^{\odot n})} && {\widetilde{K_0}(R,S;(M^{\odot n})^{\odot k}, (N^{\odot n})^{\odot k})} \\
	& {\displaystyle\prod_{j\geq 1}\Map_{\mathbb{Z}/j}(\sh{(M^{\odot n})^{\odot j}}, \sh{(N^{\odot n})^{\odot j}})} \\
	{\widetilde{K_0}(R,S;M;N)} && {\Map_{\mathbb{Z}/k}(\sh{(M^{\odot n})^{\odot k}}, \sh{(N^{\odot n})^{\odot k}})} \\
	& {\displaystyle\prod_{j\geq 1}\Map_{\mathbb{Z}/j}(\sh{M^{\odot j}}, \sh{N^{\odot j}})} \\
	{\widetilde{K_0}(R,S;M^{\odot nk}, N^{\odot nk})} && {\Map_{\mathbb{Z}/nk}(\sh{M^{\odot nk}}, \sh{N^{\odot nk}})}
	\arrow["{F^k}", from=1-1, to=1-3]
	\arrow["{\tr_\bullet}", from=1-1, to=2-2]
	\arrow["{V^n}"', from=1-1, to=3-1]
	\arrow["\tr", from=1-3, to=3-3]
	\arrow["{\mathrm{proj}_k}"', from=2-2, to=3-3]
	\arrow["{B_n}", from=2-2, to=4-2]
	\arrow["{\tr_{\bullet}}", from=3-1, to=4-2]
	\arrow["{F^{nk}}"', from=3-1, to=5-1]
	\arrow["\tau", from=3-3, to=5-3]
	\arrow["{\mathrm{proj}_{nk}}", from=4-2, to=5-3]
	\arrow["\tr"', from=5-1, to=5-3]
\end{tikzcd}\]

    If  $k\nmid i$,  \cref{FnVd} implies 
    the trace of $V^nF^i([\Gamma(f_0,\dots, f_{n-1})])$ is zero for 
    $[\Gamma(f_0,\dots , f_{n-1})]\in \widetilde{K}_0(R,S;M^{\odot n},N^{\odot n})$.
\end{proof}

\appendix
\section{$K$-theory as a shadow}
\label{appendix}

In this section we prove the results in \cref{{sec:K-end}} following the proof outlined for Proposition 14 in \cite{gepner}.\footnote{The title of this section is motivated by \cite{gepner} where this result is used to show $\tilde{K}_0(\mathcal{C};1_\mathcal{C}, HK;1,1)\cong \tilde{K}_0(\mathcal{C};1_\mathcal{C}, KH;1,1)$.
}
\whoknowsintro*

\begin{proof}
The category 
$\End(\mathcal{C};G,H;\vec{i})$
is preadditive with componentwise addition. 
Since $\mathcal{C}$ is preadditive,  composition is bilinear with respect to this sum.
Since $G$ and $H$ preserve the zero object,  the zero object of $\End(\mathcal{C};G,H;\vec{i})$ is $(G^{i_j}(0)\xrightarrow{0}H^{i_j}(0))_{j=0}^{n-1}.$

Finite biproducts are calculated pointwise, that is, if 
\[c=\left(G^{i_j}(c_{j+1})\xrightarrow{f_{j}}H^{i_j}(c_{j})\right)_{j=0}^{n-1} 
\text{ and }
d=\left(G^{i_j}(d_{j+1})\xrightarrow{g_{j}}H^{i_j}(d_{j})\right)_{j=0}^{n-1}\] 
are objects, their biproduct is
\begin{equation}\label{eq:biprod}
\resizebox{.9\textwidth}{!}{$\left(
G^{i_j}(c_{j+1}\oplus d_{j+1})\xrightarrow{G^{i_j}\pi_1\oplus G^{i_j}\pi_2}
G^{i_j}(c_{j+1})\oplus G^{i_j}(d_{j+1})
\xrightarrow{f_j\oplus g_j}
H^{i_j}(c_j)\oplus H^{i_j}(d_j) \xrightarrow{H^{i_j}k_1\oplus H^{i_j}k_2}H^{i_j}(c_j\oplus d_j)\right)_{j=0}^{n-1}$
}\end{equation}
where $\pi_1,\pi_2$ are the product projections and $l_1,l_2$ are the coproduct inclusions. 
A diagram chase shows that 
 the projections 
\[\left(c_j\oplus d_j\xrightarrow{\pi_1} c_j\right)_{j=0}^{n-1} \text{ and } \left(c_j\oplus d_j\xrightarrow{\pi_2} d_j\right)_{j=0}^{n-1}\] 
define morphisms $\pi_1:c\oplus d\to c$ and $\pi_2:c\oplus d\to d$ in $\End(\mathcal{C};G,H;\vec{i})$ satisfying the universal property of the product.
 One can similarly check that the injections $l_1:c\to c\oplus d$ given by $(l_1:c_j\to c_j\oplus d_j)_{j=0}^{n-1}$ and $l_2:d\to c\oplus d$ given by $(l_2:c_j\to c_j\oplus d_j)_{j=0}^{n-1}$ are morphisms of the category $\End (\mathcal{C};G,H;\vec{i})$ satisfying the universal property of the coproduct.

Let $(f_j)_{j=0}^{n-1}$ and  $(g_j)_{j=0}^{n-1}$ be objects in $\End(\mathcal{C}; G,H;\vec{i})$ 
and $h\colon (f_j)_{j=0}^{n-1}\to (g_j)_{j=0}^{n-1}$ be a morphism in this category.  
The kernel of $h$ in $\End (\mathcal{C};G,H;\vec{i})$ consists of $(\ker h_j )_{j=0}^{n-1}$ together with the arrows 
\[
\left(G^{i_j}(\ker h_{j+1})
\rightarrow
\ker (G^{i_j}(h_{j+1}))
\rightarrow 
\ker (H^{i_j}(h_{j}))\leftarrow H^{i_j}(\ker h_j)
 \right)_{j=0}^{n-1},\]
 where the first and last arrows are induced  by the universal property of the kernel in $\mathcal{C}$ and the last arrow is an isomorphism in $\mathcal{C}$ since $H$ is left exact.
 
A diagram chase shows
\[k \colon 
\left( G^{i_j}(\ker h_{j+1})\rightarrow H^{i_j}(\ker h_j)\right)_{j=0}^{n-1}
\to 
\left( G^{i_j}(c_{j+1})\rightarrow H^{i_j}(c_j) \right)_{j=0}^{n-1}\] 
given by the inclusions $k_j:\ker h_j\to c_j$  defines a morphism in $\End(\mathcal{C};G,H;\vec{i})$.  The pointwise universal property of the kernel in $\mathcal{C}$ and a diagram chase show that this is the kernel in $\End(\mathcal{C},G,H;\vec{i})$.  Cokernels are computed similarly. The fact that $\End(\mathcal{C};G,H;\vec{i})$ is abelian then follows from the fact that $\mathcal{C}$ is.
\end{proof}

Similar to the functor defined in \eqref{gamma'}, 
the functor in \eqref{eq:def:gammaprime}
defines a functor 
\begin{equation}\label{eq:K_0_compose_special_case}
    \Gamma_i\colon \End(\mathcal{C};G,H;n-i+1,i)\to \End (\mathcal{C};G,H;n-i,i+1)
\end{equation}
and a
homomorphism 
\begin{equation}\label{eq:uncompose}
    \begin{aligned} 
\widetilde{K}_0(\mathcal{C};G,H;n-i+1,i)\to
\widetilde{K}_0(\mathcal{C};G,H;n-i,i+1).\end{aligned}
\end{equation}

\begin{restatable}[{Following \cite[Prop. 14]{gepner}}]{thm}{thmKendcompose}
\label{lem:uncompose_specific}
For $\mathcal{C}$,
$G$, $H$ as in \cref{whoknows_intro} and 
 either $G=1_\mathcal{C}$ or $H=1_\mathcal{C}$,  the homomorphisms in \eqref{eq:uncompose} are isomorphisms.
\end{restatable}

This result is  a consequence of the following lemma.

\begin{lemma}\label{lem:composition_k_theory}
For $\mathcal{C},G,$ and $H$ as in \cref{whoknows_intro} and $G=1_\mathcal{C}$ or $H=1_\mathcal{C}$, there is an exact sequence
\[K_0(\mathcal{C})\to K_0(\mathcal{C};G,H;n-i+1,i)\xto{K_0(\Gamma_i)} K_0 (\mathcal{C};G,H;n-i,i+1).\]
\end{lemma}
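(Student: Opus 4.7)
The plan is to realize $\Gamma_i$ as (essentially) a Serre quotient functor with kernel equivalent to $\mathcal{C}$, and to apply the standard $K_0$ exact sequence for Serre quotients. Throughout write $\mathcal{A}:=\End(\mathcal{C};G,H;n-i+1,i)$ and $\mathcal{B}:=\End(\mathcal{C};G,H;n-i,i+1)$, and assume $G=1_{\mathcal{C}}$ (the case $H=1_{\mathcal{C}}$ is dual).

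First I would identify the Serre subcategory that plays the role of the kernel. Let $\mathcal{K}\subseteq\mathcal{A}$ be the full subcategory of objects $(f_0,\dots,f_{n-i})$ whose underlying objects $c_\ell$ are zero for every $\ell\neq n-i$; this forces every $f_j$ to be zero automatically. The assignment sending $c\in\mathcal{C}$ to the object with $c_{n-i}=c$ and all other components zero is an equivalence $\iota\colon\mathcal{C}\xrightarrow{\simeq}\mathcal{K}$. Because subobjects, quotients, kernels, and cokernels in $\mathcal{A}$ are computed componentwise (by the proof of \cref{whoknows_intro}), $\mathcal{K}$ is closed under subobjects, quotients, and extensions, hence is a Serre subcategory. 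The induced homomorphism $\iota_*\colon K_0(\mathcal{C})\to K_0(\mathcal{A})$ is the first map in the sought exact sequence, and $K_0(\Gamma_i)\circ\iota_*=0$ since $\Gamma_i\circ\iota$ lands in the zero object.

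Next I would build a section $\sigma\colon\mathcal{B}\to\mathcal{A}$ of $\Gamma_i$ by ``inserting an identity''. Given $(g_0,\dots,g_{n-i-1})\in\mathcal{B}$ with last component $g_{n-i-1}\colon c_{n-i}\to H^{i+1}(c_{n-i-1})$, define $\sigma$ to reuse $g_j$ in positions $j<n-i-1$, to insert a new intermediate object $H(c_{n-i-1})$, to use $\id_{H(c_{n-i-1})}$ in position $n-i-1$ (which is legitimate precisely because $G=1_{\mathcal{C}}$), and to use $g_{n-i-1}$ in the final position; this gives $\Gamma_i\sigma=\id_{\mathcal{B}}$. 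The key computation is then the following: for any $X=(f_0,\dots,f_{n-i})\in\mathcal{A}$, construct a morphism $\phi\colon X\to\sigma\Gamma_i(X)$ whose components are identities except at position $n-i$, where it is $f_{n-i-1}\colon c_{n-i}\to H(c_{n-i-1})$. A diagram chase confirms $\phi$ is a morphism of $\mathcal{A}$, and since $\ker$ and $\coker$ are pointwise, $\ker\phi$ and $\coker\phi$ are concentrated at position $n-i$ with values $\ker f_{n-i-1}$ and $\coker f_{n-i-1}$; in particular both lie in $\mathcal{K}$. Splitting $\phi$ through its image yields short exact sequences $0\to\ker\phi\to X\to\mathrm{im}\,\phi\to 0$ and $0\to\mathrm{im}\,\phi\to\sigma\Gamma_i(X)\to\coker\phi\to 0$, whose sum gives
\[[X]-[\sigma\Gamma_i(X)]=[\ker f_{n-i-1}]-[\coker f_{n-i-1}]\in\mathrm{Im}(\iota_*)\]
in $K_0(\mathcal{A})$.

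Finally I would assemble the pieces. The section shows $\bar\Gamma_i\colon\mathcal{A}/\mathcal{K}\to\mathcal{B}$ is essentially surjective; the relation above shows each $X\in\mathcal{A}$ is $\mathcal{K}$-isomorphic to $\sigma\Gamma_i(X)$, which promotes $\bar\Gamma_i$ to an equivalence $\mathcal{A}/\mathcal{K}\simeq\mathcal{B}$ via the calculus of fractions for the Serre quotient. The standard exact sequence $K_0(\mathcal{K})\to K_0(\mathcal{A})\to K_0(\mathcal{A}/\mathcal{K})\to 0$ for the $K_0$ of a Serre quotient then yields exactly the desired sequence after identifying $K_0(\mathcal{K})\cong K_0(\mathcal{C})$ and $K_0(\mathcal{A}/\mathcal{K})\cong K_0(\mathcal{B})$. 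The main obstacle will be the morphism $\phi$ step: since $H$ is only assumed left exact, $\sigma$ need not be an exact functor, so one cannot merely ``split off'' $\sigma_*$ on $K_0$; the explicit $\phi$ with $\mathcal{K}$-valued kernel and cokernel is the technical heart of the argument, and the hypothesis that one of $G,H$ is the identity is essential precisely to make the identity-insertion defining $\sigma$, and hence $\phi$, canonical.
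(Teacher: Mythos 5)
Your proof is correct and follows the same overall strategy as the paper (exhibit the image of $\mathcal{C}$ as the kernel of $\Gamma_i$, hence a Serre subcategory $\mathcal{K}$; show the induced functor $\mathcal{A}/\mathcal{K}\to\mathcal{B}$ is an equivalence; apply the $K_0$ part of Quillen's localization sequence). The one genuinely new ingredient you supply, and it is a good one, is the natural transformation $\phi\colon\id_{\mathcal{A}}\to\sigma\Gamma_i$ with $\mathcal{K}$-valued kernel and cokernel. The paper deduces faithfulness of $\varphi\colon\mathcal{A}/\mathcal{K}\to\mathcal{B}$ from the general fact that the kernel of $\Gamma_i$ equals $\mathcal{K}$, but then asserts that $\Gamma_i$ (and hence $\varphi$) is full simply because $\Gamma_i$ has a section. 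That inference is not valid for $\Gamma_i$ itself: a functor with a strict right inverse need not be full (one can build explicit two-step factorizations of the same endomorphism that admit no compatible map between their middle terms), so the paper's reasoning is at best elliptical. Your $\phi$ is exactly what is needed to repair this: it shows $\bar\sigma=q\circ\sigma$ is a genuine quasi-inverse of $\bar\Gamma_i$ (since $q(\phi)$ is a natural isomorphism $\id_{\mathcal{A}/\mathcal{K}}\Rightarrow\bar\sigma\bar\Gamma_i$ once one observes that $q$ is bijective on objects), and in particular it shows $\varphi$ is full via the roof $X\xrightarrow{\phi_X}\sigma\Gamma_i(X)\xrightarrow{\sigma(g)}\sigma\Gamma_i(Y)\xleftarrow{\phi_Y}Y$. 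You also correctly flag that $\sigma$ is not exact, so that one cannot shortcut the Serre-quotient argument by splitting $K_0(\sigma)$ off $K_0(\Gamma_i)$; this is a real pitfall and your write-up avoids it. In short: same skeleton as the paper, but you make the equivalence $\mathcal{A}/\mathcal{K}\simeq\mathcal{B}$ explicit where the paper leaves a gap, and your extra $K_0$ relation $[X]-[\sigma\Gamma_i(X)]\in\operatorname{Im}\iota_*$ is a nice byproduct even though it is not what ultimately drives the argument.
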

\begin{proof}
We will prove the case $G=1_\mathcal{C}$. The case $H=1_\mathcal{C}$ is entirely similar. 

There is an exact functor 
\begin{equation}\label{eq:exact_inclusion}\mathcal{C}\to \End(\mathcal{C};1_{\mathcal{C}},H;n-i+1,i)\end{equation}
which sends the object $c$ to the object
\[0\xrightarrow{f_0}H(0),\dots, 0\xrightarrow{f_{n-i-2}}H(0),c\xrightarrow{f_{n-i-1}} H(0),0\xrightarrow{f_{n-i}}H^i (c)\]
with $f_0,f_1,\dots, f_{n-i}$  all being the zero map. The arrow $h:c\to d$ is sent  to the morphism given by the arrows $(1:0\to 0,\dots,1:0\to 0,h:c\to d,\dots,1:0\to 0).$

The image of $\mathcal{C}$ under the functor in \eqref{eq:exact_inclusion} is a category isomorphic to $\mathcal{C},$ so we will make no distinction between $\mathcal{C}$ and its image. The image of \eqref{eq:exact_inclusion} is the 
kernel of the exact functor $\Gamma_i$, and so it is  
 a Serre subcategory of $\End(\mathcal{C};1_{\mathcal{C}},H;n-i+1,i)$.  
 This implies there is an exact functor $\varphi$ making the following diagram commute
\[\begin{tikzcd}
	{\End(\mathcal{C};1_{\mathcal{C}},H;n-i+1,i)} & {\End(\mathcal{C};1_{\mathcal{C}},H;n-i+1,i)/\mathcal{C}} \\
	& {\End(\mathcal{C};1_{\mathcal{C}},H;n-i,i+1)}
	\arrow[from=1-1, to=1-2]
	\arrow["{\Gamma_i}"', from=1-1, to=2-2]
	\arrow["\varphi", from=1-2, to=2-2]
\end{tikzcd}\]
Since the kernel of $\Gamma_i$ is $\mathcal{C},$ the exact functor $\varphi$ is faithful \cite[\href{https://stacks.math.columbia.edu/tag/06XK}{Tag 06XK}]{stacks-project}.

The functor 
$\phi\colon \End(\mathcal{C};1_{\mathcal{C}},H;n-i,i+1)\to \End(\mathcal{C};1_{\mathcal{C}},H;n-i+1,i)$ that sends the object
\[\bigl(
c_1\xrightarrow{f_0}H(c_0),c_2\xrightarrow{f_1} H(c_1),\dots, {c_{n-i-1}}\xrightarrow{f_{n-i-2}}H(c_{n-i-2}) ,c_0\xrightarrow{f_{n-i-1}}H^{i+1}(c_{n-i-1})
\bigr)
\]
to the object
\[\resizebox{.99\textwidth}{!}{$\bigl(
c_1\xrightarrow{f_0}H(c_0),c_2\xrightarrow{f_1}H(c_1),\dots,c_{n-i-1}\xrightarrow{f_{n-i-2}}H(c_{n-i-2}),H(c_{n-i-1})\xrightarrow{1}H(c_{n-i-1}),c_0\xrightarrow{f_{n-i-1}}H^{i+1}(c_{n-i-1})
\bigr)$},\]
and the morphism $h=(h_0,h_1,\dots , h_{n-i-1})$ to $(h_0,h_1,\dots,h_{n-i-1},H(h_{n-i-1}))$ is 
a right inverse for  $\Gamma_i$, 
and thus, $\Gamma_i$ and $\varphi$ are full and surjective on objects. We conclude that $\varphi$ is an equivalence of categories. 

Quillen's localization theorem for abelian categories  \cite{Q72} applied to the equivalence of categories $\varphi$  implies there is a long exact sequence of $K$-theory groups that ends in the short exact sequence  in the statement of the lemma.
\end{proof}

\begin{proof}[Proof of \cref{lem:uncompose_specific}]

Recall the exact functor $$\underbrace{\mathcal{C}\times\cdots\times \mathcal{C}}_{n-i+1 \text{ copies}}\to \End(\mathcal{C};G,H;n-i+1,i)$$
defined in \eqref{C^n_trivial_inclusion}.
The inclusion into the last coordinate, $\mathcal{C}\to \mathcal{C}\times \dots \times \mathcal{C},$ and the projection to the first coordinates
$$\underbrace{\mathcal{C}\times \cdots\times \mathcal{C}}_{n-i\text{ copies}}\to \underbrace{\mathcal{C}\times \cdots\times \mathcal{C}}_{n-i-1\text{ copies}}$$ are also exact functors and so induce maps on $K_0.$ At the level of $K_0,$ we get the following commutative diagram.
\[\begin{tikzcd}
	{K_0(\mathcal{C})} & {{K}_0(\overbrace{\mathcal{C}\times\cdots\times \mathcal{C}}^{n-i+1 \text{ copies}})} & {{K}_0(\overbrace{\mathcal{C}\times\cdots\times \mathcal{C}}^{n-i \text{ copies}})} & 0 \\
	{K_0(\mathcal{C})} & {K_0(\mathcal{C};G,H;n-i+1,i)} & {K_0(\mathcal{C},G,H;n-i,i+1)} & 0 \\
	0 & {\widetilde{K}_0(\mathcal{C};G,H;n-i+1,i)} & {\widetilde{K}_0(\mathcal{C};G,H;n-i,i+1)} \\
	& 0 & 0
	\arrow[shift right, from=1-1, to=1-2]
	\arrow[shift right, no head, from=1-1, to=2-1]
	\arrow[shift left, no head, from=1-1, to=2-1]
	\arrow[shift right=2, from=1-2, to=1-3]
	\arrow[from=1-2, to=2-2]
	\arrow[from=1-3, to=1-4]
	\arrow[from=1-3, to=2-3]
	\arrow[from=2-1, to=2-2]
	\arrow[from=2-1, to=3-1]
	\arrow[from=2-2, to=2-3]
	\arrow[from=2-2, to=3-2]
	\arrow[from=2-3, to=2-4]
	\arrow[from=2-3, to=3-3]
	\arrow[from=3-1, to=3-2]
	\arrow[from=3-2, to=3-3]
	\arrow[from=3-2, to=4-2]
	\arrow[from=3-3, to=4-3]
\end{tikzcd}\]
The first row is exact \cite[Example II.7.1.6.]{weibel:kbook}.  The middle row is exact by \cref{lem:composition_k_theory} and the center and right columns are exact by definition of reduced $K_0$. Then the desired isomorphism follows by a diagram chase.
\end{proof}

\section{Additivity of trace}
\label{ap:add}
In this section, we give an explicit proof that the trace on the bicategory of rings, bimodules, and their homomorphisms is additive on short exact sequences. This shows that the trace map descends to $K$-theory.

For a $R$-$S$-bimodule $P$, let  $_\iota P$ denote $P$ regarded as a $\mathbb{Z}$-$S$-bimodule.  Similarly, for a $R$-$R$-bimodule $M$, let  $_\iota M_\iota$ denote $M$ regarded as an abelian group.  The quotient from the tensor over $\mathbb{Z}$ to tensor over $R$ defines a homomorphism $\rho\colon _\iota M_\iota \otimes_\mathbb{Z} \,_\iota P\to 
    \,_\iota M\otimes_RP$.  A similar quotient defines a map 
    $q\colon \sh{\,_\iota M_\iota}\to \sh{M}$.

\begin{prop}\label{prop:trace_forget_R}
For a homomorphism  $f\colon M\otimes_RP\to P\otimes_SN$, the  trace of 
\begin{equation}\label{eq:forget_R}
_\iota M_\iota \otimes_\mathbb{Z} \,_\iota P\xto{\rho}      \,_\iota M\otimes_RP\xto{_\iota f} \,_\iota P\otimes_SN
\end{equation}
is the composite 
$\sh{_\iota M_\iota}\xto{q}\sh{M}\xto{\tr(f)} \sh{N}$.
\end{prop}

\begin{proof}
There is a commutative diagram 
 \begin{equation}\label{eq:f_to_Z_mod}
\begin{tikzcd}
	{_\iota R\otimes_R M\otimes_R R_\iota \otimes_\mathbb{Z} \,_\iota R\otimes_R P} && {   \,_\iota R\otimes_R  M\otimes_RP} & {_\iota R\otimes_R  P\otimes_SN} \\
	{_\iota  M_\iota \otimes_\mathbb{Z} \,_\iota P} && { \,_\iota  M\otimes_RP} & {_\iota  P\otimes_SN}
	\arrow["{\id\otimes \id\otimes\epsilon\otimes \id}", from=1-1, to=1-3]
	\arrow["{\alpha\otimes \beta}"', from=1-1, to=2-1]
	\arrow["{\id\otimes  f}", from=1-3, to=1-4]
	\arrow["{\gamma\otimes \id}"', from=1-3, to=2-3]
	\arrow["{\beta\otimes \id}", from=1-4, to=2-4]
	\arrow[from=2-1, to=2-3]
	\arrow["{_\iota f}"', from=2-3, to=2-4]
\end{tikzcd}
\end{equation}
where the vertical maps are isomorphisms.  
Then the following diagram commutes by the cited results. 
\[\begin{tikzcd}[column sep=1.5in, row sep = .3in]
	{\sh{M\otimes_R R_\iota \otimes_\mathbb{Z} \,_\iota R }} & {\sh{M}} \\
	\\
	{\sh{_\iota R\otimes_R M\otimes_R R_\iota }} & {\sh{N}} \\
	{\sh{_\iota  M_\iota} } & {\sh{N}}
	\arrow["{\sh{\id\otimes \epsilon}}", from=1-1, to=1-2]
	\arrow["{\tr(f)}", from=1-2, to=3-2]
	\arrow["{\cong }", from=3-1, to=1-1]
	\arrow[""{name=0, anchor=center, inner sep=0}, "{\tr(\id\otimes \id\otimes\epsilon)}"{description}, from=3-1, to=1-2]
	\arrow["{\tr((\id\otimes f)(\id \otimes \id\otimes \epsilon\otimes \id))}"', from=3-1, to=3-2]
	\arrow["{\sh{\alpha}}"', from=3-1, to=4-1]
	\arrow["\text{\cite[7.1]{Ponto_2012}}"{description}, draw=none, from=3-1, to=4-2]
	\arrow["\id", from=3-2, to=4-2]
	\arrow["{\tr((_\iota f)\rho)}"', from=4-1, to=4-2]
	\arrow["\text{\cite[3.13]{morita}}"{description}, draw=none, from=1-1, to=0]
	\arrow["\text{\cite[7.5]{Ponto_2012} }"{description}, draw=none, from=0, to=3-2]
\end{tikzcd}\]
Direct computation shows $q$ is the left and top composite. 
\end{proof}

  For $f\colon M\otimes (P'\oplus P'')\to  (P'\oplus P'')\otimes N$, let 
  $f'\colon M\otimes P'\to P'\otimes M$ be the composite 
  \begin{align*}
       M\otimes P'&
       \xto{\id \otimes i_{P'}}M\otimes  (P'\oplus P'')
       \xto{f}(P'\oplus P'')\otimes N
       \xto{\pi_{P'}\otimes \id}P'\otimes N
  \end{align*}
  and similarly for $f''\colon M\otimes P''\to P''\otimes N$.

\begin{prop}\label{lem:sum_of_traces}
    For $f\colon M\otimes (P'\oplus P'')\to  (P'\oplus P'')\otimes N$, $\tr(f)=\tr(f')+\tr(f'')$. 
\end{prop}

\begin{proof}
    Let $Q$, $Q'$ and $Q''$ be $\mathbb{Z}$-$S$-bimodules and $L$ be an abelian group.  Let $g\colon L\otimes_{\mathbb{Z}}Q\to Q\otimes_R N$ be  a homomorphism and similarly for $g'$ and $g''$.

    If $\eta_{Q'}$ and $\epsilon_{Q'}$ are a choice of coevaluation and evaluation for $Q'$ and similarly for $Q''$, then the following are a choice of coevaluation for $Q'\oplus Q''$ 
        \begin{align*}
        \mathbb{Z}\xto{\triangle}\mathbb{Z}\oplus \mathbb{Z}
        &\xto{\eta_{Q'}\oplus \eta_{Q''}}
        (Q'\otimes_S \Hom_S(Q',S))
        \oplus 
        (Q ''\otimes_S \Hom_S(Q'',S))
        \\&\hookrightarrow
        (Q'\oplus Q'')\otimes_S (\Hom_S(Q', S)\oplus \Hom_S(Q'', S))
    \end{align*}    
    and evaluation
 \begin{align*}
         (\Hom_S(Q', S)\oplus \Hom_S(Q'', S))\otimes_{\mathbb{Z}}  (Q'\oplus Q'')
         &\xto{\mathrm{proj}}
          (\Hom_S(Q', S){\otimes_{\mathbb{Z}}} Q')\oplus 
          (\Hom_S(Q'', S){\otimes_{\mathbb{Z}}} Q'')
          \\
          &\xto{\epsilon_{Q'}\oplus \epsilon_{Q''}}
          S\oplus S
          \xto{+}S
          \end{align*}
The triangle identities follow from a routine diagram chase.

In  \cref{fig:sum_of_traces}, 
the maps labeled with $\sim$ are natural isomorphisms.  Those labeled with $\pi$ are examples of 
\[\sh{T\oplus T}\to \sh{T}\oplus \sh{T}\]
induced by the projections.  The maps labeled $\hookrightarrow$ are inclusions and $\mathrm{proj}$ are projections. 
 Most small regions of the diagram commute by definition or naturality.  The  exception is marked with $\ast$, and its commutativity can be verified by a direct computation on generators.

In \cref{fig:sum_of_traces}, the trace $\tr(g)$ is realized by the outer composite along the top, right, and bottom edges. The sum $\tr(g') + \tr(g'')$ corresponds to the left vertical composite through the summands and their evaluations.

    If we take $L=\,_\iota M_\iota$, $Q=\,_\iota P$, $Q'=\,_\iota P'$ and $Q''=\,_\iota P''$ and $g=(_\iota f)\circ \rho$, and similarly for  $g'$ and $g''$, then \cref{prop:trace_forget_R} gives the following computation.
    \begin{align*}
    \tr(f)([m])&= \tr(f)(q[m])=\tr(g)([m])
    \\
    &=(\tr(g')+\tr(g''))([m])=\tr(g')([m])+\tr(g'')([m])
    \\
    &=\tr(f')(q[m])+\tr(f'')(q[m])
     =\tr(f')([m])+\tr(f'')([m]) \qedhere
     \end{align*}
\end{proof}

\begin{figure}
\adjustbox{scale=.65,center}
{\begin{tikzcd}[column sep=-.75in,row sep=.3in]
	& {\sh{ L\otimes_\mathbb{Z}\mathbb{Z} }} \\
	{\sh{L}} && {\sh{L\otimes_\mathbb{Z} (\mathbb{Z}\oplus \mathbb{Z})}} &&& {       \sh{ L\otimes_\mathbb{Z} ((Q'\otimes_S (Q')^*)\oplus(Q''\otimes_S (Q'')^*))}} \\
	& {\sh{L\oplus L}} \\
	{\sh{L}\oplus \sh{L}} && {\sh{(L\otimes_\mathbb{Z} \mathbb{Z})\oplus (L\otimes_\mathbb{Z} \mathbb{Z})}} \\
	& {\sh{L\otimes_\mathbb{Z} \mathbb{Z}}\oplus \sh{L\otimes_\mathbb{Z} \mathbb{Z}}} && {    \sh{ (L\otimes_\mathbb{Z}  Q'\otimes_S (Q')^*)\oplus(L\otimes_\mathbb{Z} Q''\otimes_S (Q'')^*)}} && {       \sh{ L\otimes_\mathbb{Z} (Q'\oplus Q'')\otimes_S ((Q')^*\oplus (Q'')^*)}} \\
	&& { \sh{ L\otimes_\mathbb{Z}  Q'\otimes_S (Q')^*}\oplus\sh{L\otimes_\mathbb{Z} Q''\otimes_S (Q'')^*}} && {\sh{  ((L\otimes_\mathbb{Z} Q')\oplus (L\otimes_\mathbb{Z} Q''))\otimes_S ((Q')^*\oplus (Q'')^*)}} \\
	&&& {\sh{(Q'\otimes_S N\otimes_S(Q')^*)\oplus (Q''\otimes_S N\otimes_S(Q'')^*)}} && { \sh{(Q'\oplus Q'')\otimes_S N\otimes_S ((Q')^*\oplus (Q'')^*)}} \\
	&& {\sh{ Q'\otimes_S  N\otimes_S (Q')^*}\oplus\sh{Q''\otimes_S N\otimes_S (Q'')^*}} && {\sh{((Q'\otimes_S N)\oplus (Q''\otimes_S N))\otimes_S ((Q')^*\oplus (Q'')^*)}} \\
	&&& {\sh{(Q'\otimes_S N\otimes_S(Q')^*)\oplus (Q''\otimes_S N\otimes_S(Q'')^*)}} \\
	&& {\sh{(Q')^*\otimes_\mathbb{Z} Q'\otimes_S N}\oplus \sh{(Q'')^*\otimes_\mathbb{Z} Q''\otimes_S N}} & \ast & {\sh{((Q')^*\oplus (Q'')^*)\otimes_\mathbb{Z} ((Q'\otimes_S N)\oplus ( Q''\otimes_S N))}} \\
	& {\sh{S\otimes_S N}\oplus \sh{S\otimes_S N}} && {\sh{((Q')^*\otimes_\mathbb{Z} Q'\otimes_S N)\oplus ((Q'')^*\otimes_\mathbb{Z} Q''\otimes_S N)}} && { \sh{((Q')^*\oplus (Q'')^*)\otimes_\mathbb{Z}   (Q'\oplus Q'')\otimes_S N} } \\
	{\sh{N}\oplus \sh{N}} && {\sh{(S\otimes_S N)\oplus (S\otimes_S N)}} \\
	& {\sh{N\oplus N}} \\
	{\sh{N}} && {\sh{(S\oplus S)\otimes_S N}} &&& {\sh{((Q')^*\otimes_\mathbb{Z} Q')\oplus ((Q'')^*\otimes_\mathbb{Z} Q''))\otimes_S N}} \\
	& {\sh{S\otimes_S N}}
	\arrow["\sim"{description}, from=1-2, to=2-1]
	\arrow["{\sh{\id\otimes \triangle }}", from=1-2, to=2-3]
	\arrow["{\sh{\triangle}}"{description}, from=2-1, to=3-2]
	\arrow["{\triangle }"{description}, from=2-1, to=4-1]
	\arrow["{\sh{\id \otimes (\eta_{Q'}\oplus \eta_{Q''})}}", from=2-3, to=2-6]
	\arrow[hook, from=2-6, to=5-6]
	\arrow["\pi"{description}, from=3-2, to=4-1]
	\arrow["\sim"{description}, from=4-1, to=5-2]
	\arrow["\sim"{description}, from=4-3, to=2-3]
	\arrow["\sim"{description}, from=4-3, to=3-2]
	\arrow["\pi"{description}, from=4-3, to=5-2]
	\arrow["{\sh{(\id \otimes \eta_{Q'})\oplus (\id \otimes \eta_{Q''})}}"{description}, from=4-3, to=5-4]
	\arrow["{\sh{\id \otimes \eta_{P'}}\oplus \sh{\id \otimes \eta_{P''}}}"{description}, from=5-2, to=6-3]
	\arrow["\sim"{description}, from=5-4, to=2-6]
	\arrow["\pi"{description}, from=5-4, to=6-3]
	\arrow[hook', from=5-4, to=6-5]
	\arrow["{\sh{(g'\otimes \id)\oplus (g''\otimes \id)}}"{description}, from=5-4, to=7-4]
	\arrow["{\sh{g\otimes \id} }", from=5-6, to=7-6]
	\arrow["{\sh{g'\otimes \id}\oplus \sh{g''\otimes \id}}"{description}, from=6-3, to=8-3]
	\arrow["\sim"{description}, from=6-5, to=5-6]
	\arrow["{\sh{(g'\oplus g'')\otimes \id}}"{description}, from=6-5, to=8-5]
	\arrow["\pi"{description}, from=7-4, to=8-3]
	\arrow[hook', from=7-4, to=8-5]
	\arrow["\id"{description}, from=7-4, to=9-4]
	\arrow["\theta", from=7-6, to=11-6]
	\arrow["{\theta\oplus \theta}", from=8-3, to=10-3]
	\arrow["\sim"{description}, from=8-5, to=7-6]
	\arrow["{\sh{\mathrm{proj}}}"{description}, from=8-5, to=9-4]
	\arrow["\theta", from=8-5, to=10-5]
	\arrow["\pi"{description}, from=9-4, to=8-3]
	\arrow["{\sh{\epsilon_{Q'}\otimes \id}\oplus \sh{\epsilon_{Q''}\otimes \id} }"{description}, from=10-3, to=11-2]
	\arrow["{\sh{\mathrm{proj}}}", from=10-5, to=11-4]
	\arrow["\sim"{description}, from=10-5, to=11-6]
	\arrow["\sim"{description}, from=11-2, to=12-1]
	\arrow["\pi"{description}, from=11-4, to=10-3]
	\arrow["{\sh{(\epsilon_{Q'}\otimes \id)\oplus (\epsilon_{Q''}\otimes \id)} }"{description}, from=11-4, to=12-3]
	\arrow["\sim"{description}, from=11-4, to=14-6]
	\arrow["{\sh{\mathrm{proj}\otimes \id}}", from=11-6, to=14-6]
	\arrow["{+}"{description}, from=12-1, to=14-1]
	\arrow["\pi"{description}, from=12-3, to=11-2]
	\arrow["\sim"{description}, from=12-3, to=13-2]
	\arrow["\sim"{description}, from=12-3, to=14-3]
	\arrow["\pi"{description}, from=13-2, to=12-1]
	\arrow["{\sh{+}}"{description}, from=13-2, to=14-1]
	\arrow["{\sh{+\otimes \id}}", from=14-3, to=15-2]
	\arrow["{ \sh{(\epsilon_{Q'}\oplus \epsilon_{Q''})\otimes \id} }", from=14-6, to=14-3]
	\arrow["\sim"{description}, from=15-2, to=14-1]
\end{tikzcd}}
\caption{Commutative diagram for \cref{lem:sum_of_traces}}\label{fig:sum_of_traces}
\end{figure}

\begin{thm}\label{prop:add_traces_naive}
    For an exact sequence 
$(P',f')\to (P,f)\to (P'',f'')$
in $\End(R,S;M,N)$, 
$\tr(f)=\tr(f')+\tr( f'')$.
\end{thm}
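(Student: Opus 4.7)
The plan is to reduce to \cref{lem:sum_of_traces} via an $S$-linear splitting of the given exact sequence. Write the morphisms as $\phi\colon(P',f')\to(P,f)$ and $\psi\colon(P,f)\to(P'',f'')$. Since $P''$ lies in $\Mod_{R,S}^c$, it is finitely generated projective as an $S$-module, so the underlying sequence $P'\xto{\phi}P\xto{\psi}P''$ splits in the category of $S$-modules. First I would fix an $S$-linear section $s\colon P''\to P$ of $\psi$ and the induced $S$-linear retraction $\pi'\colon P\to P'$ of $\phi$, so that $\phi\pi'+s\psi=\id_P$. This presents $P$ as an $S$-module biproduct of $P'$ and $P''$, but in general not as an $R$-$S$-bimodule biproduct, so \cref{lem:sum_of_traces} does not apply to $f$ on the nose.

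To get around this, I would route the computation through \cref{prop:trace_forget_R}, which identifies $\tr(f)\circ q_M$ with the trace of ${}_\iota f\circ\rho_P\colon{}_\iota M_\iota\otimes_\mathbb{Z}{}_\iota P\to{}_\iota P\otimes_SN$, and analogously for $f'$ and $f''$. The composite ${}_\iota f\circ\rho_P$ is only a $\mathbb{Z}$-$S$-bimodule map, and the $S$-module splitting above exhibits ${}_\iota P\cong{}_\iota P'\oplus{}_\iota P''$ as a genuine $\mathbb{Z}$-$S$-bimodule direct sum. Thus \cref{lem:sum_of_traces} now applies and yields
\[\tr({}_\iota f\circ\rho_P)=\tr\bigl((\pi'\otimes\id)\circ{}_\iota f\circ\rho_P\circ(\id\otimes\phi)\bigr)+\tr\bigl((\psi\otimes\id)\circ{}_\iota f\circ\rho_P\circ(\id\otimes s)\bigr).\]

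The remaining step is to identify the two summands with $\tr({}_\iota f'\circ\rho_{P'})$ and $\tr({}_\iota f''\circ\rho_{P''})$. This is a short diagram chase using naturality of $\rho$ along the $R$-$S$-bimodule maps $\phi$ and $\psi$, the commutative squares $f\circ(\id\otimes\phi)=(\phi\otimes\id)\circ f'$ and $(\psi\otimes\id)\circ f=f''\circ(\id\otimes\psi)$ coming from the definition of a morphism in $\End(R,S;M,N)$, and the identities $\pi'\phi=\id=\psi s$. Feeding the resulting equation back through \cref{prop:trace_forget_R} and cancelling the surjection $q_M\colon\sh{{}_\iota M_\iota}\to\sh{M}$ on the right then gives $\tr(f)=\tr(f')+\tr(f'')$. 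The main obstacle is precisely the mismatch between the bimodule direct-sum hypothesis of \cref{lem:sum_of_traces} and the merely $S$-linear splitting available to us; \cref{prop:trace_forget_R} is exactly the tool designed to bridge that gap, after which everything is formal.
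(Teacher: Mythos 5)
Your proof is correct, and it is in fact more careful than the paper's own argument on a real point. The paper also reduces to \cref{lem:sum_of_traces}: it conjugates $f$ by the isomorphism $\alpha\colon P\to P'\oplus P''$ arising from the $S$-linear splitting and then invokes cyclicity of the trace. As you observe, $\alpha$ is only right $S$-linear, not an $R$-$S$-bimodule map, so the map $\id_M\otimes_R\alpha^{-1}$ in the paper's display is not literally well-defined, and the cyclicity result being cited concerns 2-cells of the bimodule bicategory. What makes the argument work --- and what the proof of \cref{lem:sum_of_traces} itself already exploits --- is exactly \cref{prop:trace_forget_R}: after precomposing with the surjection $q$, the trace of $f$ depends only on the underlying $\mathbb{Z}$-$S$-bimodule structure, where the $S$-linear splitting really is a $\mathbb{Z}$-$S$-bimodule direct-sum decomposition. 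Your detour through ${}_\iota f\circ\rho_P$ makes this explicit, so you avoid the ill-typed step rather than relying on the reader to supply it. The identification of the two diagonal summands with ${}_\iota f'\circ\rho_{P'}$ and ${}_\iota f''\circ\rho_{P''}$ does go through as you describe: for the first, use naturality of $\rho$ along the bimodule map $\phi$, the square $f\circ(\id\otimes\phi)=(\phi\otimes\id)\circ f'$, and $\pi'\phi=\id_{P'}$; for the second, use naturality of $\rho$ along $\psi$ (and not along $s$, which is only $S$-linear), the square $(\psi\otimes\id)\circ f=f''\circ(\id\otimes\psi)$, and $\psi s=\id_{P''}$. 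Cancelling the surjection $q$ then finishes the proof. In short, same strategy and same key lemma as the paper, but your route via \cref{prop:trace_forget_R} closes a gap that the paper leaves implicit.
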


\begin{proof} Let $\alpha \colon P\to P'\oplus P''$ be the isomorphism induced by the splitting as right $S$-modules.  
Then we have the following commutative diagram.

\begin{equation}\label{eq:split_on_the_ses}\begin{tikzcd}
	& {M\otimes (P'\oplus P'')} \\
	{M\otimes P'} & {M\otimes P} & {M\otimes P''} \\
	{P'\otimes N} & {P\otimes N} & {P''\otimes N} \\
	& {(P'\oplus P'')\otimes N}
	\arrow["{\text{id}\otimes\alpha^{-1}}", from=1-2, to=2-2]
	\arrow["{\text{id}\otimes i_{P'}}", from=2-1, to=1-2]
	\arrow[from=2-1, to=2-2]
	\arrow["{f'}"', from=2-1, to=3-1]
	\arrow[from=2-2, to=2-3]
	\arrow["f"', from=2-2, to=3-2]
	\arrow["{\text{id}\otimes i_{P''}}"', from=2-3, to=1-2]
	\arrow["{f''}"', from=2-3, to=3-3]
	\arrow[from=3-1, to=3-2]
	\arrow["{i_{P'}\otimes\text{id}}"', from=3-1, to=4-2]
	\arrow[from=3-2, to=3-3]
	\arrow["{\alpha\otimes \text{id}}", from=3-2, to=4-2]
	\arrow["{i_{P''}\otimes\text{id}}", from=3-3, to=4-2]
\end{tikzcd}
\end{equation}

By cyclicity of trace \cite[4.5.4]{ponto}, 
    \begin{align*}
    \tr(f)
    &=\tr(M\otimes P\xto{f}P\otimes N\xto{\alpha\otimes \id}(P'\oplus P'')\otimes N
    \xto{\alpha^{-1}\otimes \id}P\otimes N) 
\\ 
&=\tr(M\otimes (P'\oplus P'')
\xto{\id \otimes \alpha^{-1}}M\otimes P\xto{f}P\otimes N\xto{\alpha\otimes \id}(P'\oplus P'')\otimes N)
\end{align*}
By \cref{lem:sum_of_traces}, this is the sum of the traces of the maps 
\begin{equation}\label{eq:sum_of_traces_1}M\otimes P'\xto{\id \otimes i_{P'}} M\otimes (P'\oplus P'')
\xto{\id \otimes \alpha^{-1}}M\otimes P\xto{f}P\otimes N\xto{\alpha\otimes \id }(P'\oplus P'')\otimes N\xto{\pi_{P'}\otimes \id}P'\otimes N\end{equation}
and 
\begin{equation}\label{eq:sum_of_traces_2}M\otimes P''\xto{\id \otimes i_{P''}} M\otimes (P'\oplus P'')
\xto{\id \otimes \alpha^{-1}}M\otimes P\xto{f}P\otimes N\xto{\alpha\otimes \id }(P'\oplus P'')\otimes N\xto{\pi_{P''}\otimes \id}P''\otimes N\end{equation}
Using the commutative diagram in \eqref{eq:split_on_the_ses},
the composite in \eqref{eq:sum_of_traces_1} simplifies to 
\[M\otimes P'\xto{f'} P'\otimes N\xto{i_{P'}\otimes \id } (P'\oplus P'')\otimes N
\xto{\pi_{P'}\otimes \id}P'\otimes N,\] 
which equals  $f'$.  Similarly, the composite in
\eqref{eq:sum_of_traces_2} simplifies to 
\[M\otimes P''\xto{f''} P''\otimes N\xto{i_{P''}\otimes \id } (P'\oplus P'')\otimes N
\xto{\pi_{P''}\otimes \id}P''\otimes N\] 
which equals  $f''$. 
\end{proof}

\restatedefntraceequiv*

\bibliographystyle{amsalpha2}
\bibliography{trace}{}

\providecommand{\bysame}{\leavevmode\hbox to3em{\hrulefill}\thinspace}
\providecommand{\MR}{\relax\ifhmode\unskip\space\fi MR }
\providecommand{\MRhref}[2]{%
  \href{http://www.ams.org/mathscinet-getitem?mr=#1}{#2}
}
\providecommand{\doi}[1]{%
  doi:\href{https://dx.doi.org/#1}{#1}}
\providecommand{\arxiv}[1]{%
  arXiv:\href{https://arxiv.org/abs/#1}{#1}}
\providecommand{\href}[2]{#2}
\begin{thebibliography}{DKNP22}

\bibitem[Alm73]{ALM1}
G.~Almkvist, \emph{Endomorphisms of finitely generated projective modules over
  a commutative ring}, Ark. Mat. \textbf{11} (1973), 263--301.

\bibitem[Alm78]{ALM3}
\bysame, \emph{{$K$}-theory of endomorphisms}, J. Algebra \textbf{55} (1978),
  no.~2, 308--340.

\bibitem[Cam]{facets}
J.~A. Campbell, \emph{Facets of the {W}itt vectors}. \arxiv{1910.10206}

\bibitem[CP19]{morita}
J.~A. Campbell and K.~Ponto, \emph{Topological {H}ochschild homology and higher
  characteristics}, Algebr. Geom. Topol. \textbf{19} (2019), no.~2, 965--1017.
  \doi{10.2140/agt.2019.19.965} \arxiv{1803.01284}

\bibitem[DP78]{Dold1978-ol}
A.~Dold and D.~Puppe, \emph{Duality, trace, and transfer}, Proceedings of the
  International Conference on Geometric Topology, Warsaw; Warsaw, 1978,
  pp.~81--102.

\bibitem[DKNP22]{DKNP}
E.~Dotto, A.~Krause, T.~Nikolaus, and I.~Patchkoria, \emph{Witt vectors with
  coefficients and characteristic polynomials over non-commutative rings},
  Compos. Math. \textbf{158} (2022), no.~2, 366--408.
  \doi{https://doi.org/10.1112/s0010437x22007254} \arxiv{2002.01538}

\bibitem[DS88]{DRESS198887}
A.~W. Dress and C.~Siebeneicher, \emph{The {B}urnside ring of profinite groups
  and the {W}itt vector construction}, Advances in Mathematics \textbf{70}
  (1988), no.~1, 87--132. \doi{https://doi.org/10.1016/0001-8708(88)90052-7}

\bibitem[DS89]{DRESS19891}
A.~W. Dress and C.~Siebeneicher, \emph{The {B}urnside ring of the infinite
  cyclic group and its relations to the necklace algebra, {$\lambda$}-rings,
  and the universal ring of {W}itt vectors}, Advances in Mathematics
  \textbf{78} (1989), no.~1, 1--41.
  \doi{https://doi.org/10.1016/0001-8708(89)90027-3}

\bibitem[Gep18]{gepner}
D.~Gepner, \emph{The cyclotomic trace}, Arbeitsgemeinschaft: Topological Cyclic
  Homology (L.~Hesselholt and P.~Scholze, eds.), vol.~15, Oberwolfach Rep.,
  no.~2, 2018, pp.~874--879. \doi{https://doi.org/10.4171/OWR/2018/15}

\bibitem[Gra77]{GRAYSON2}
D.~R. Grayson, \emph{The {$K$}-theory of endomorphisms}, J. Algebra \textbf{48}
  (1977), no.~2, 439--446.

\bibitem[Gra78]{GRAYSON1}
\bysame, \emph{Grothendieck rings and {W}itt vectors}, Comm. Algebra \textbf{6}
  (1978), no.~3, 249--255.

\bibitem[Hes97]{Hesselholt1997WittVO}
L.~Hesselholt, \emph{Witt vectors of non-commutative rings and topological
  cyclic homology}, Acta Mathematica \textbf{178} (1997), 109--141.

\bibitem[KN18]{Lectures}
A.~Krause and T.~Nikolaus, \emph{Lectures on topological hochschild homology
  and cyclotomic spectra}, available at
  https://www.uni-muenster.de/IVV5WS/WebHop/user/nikolaus/Papers/Lectures.pdf,
  2018.

\bibitem[Lei]{leinster}
T.~Leinster, \emph{Basic bicategories}. \arxiv{math/9810017}

\bibitem[MP22]{mp:coherence}
C.~Malkiewich and K.~Ponto, \emph{Coherence for bicategories, lax functors, and
  shadows}, Theory and Applications of Categories \textbf{38} (2022), no.~12,
  328--373. \arxiv{2109.01249}

\bibitem[MS06]{may2004parametrized}
J.~P. May and J.~Sigurdsson, \emph{Parametrized homotopy theory}, Mathematical
  Surveys and Monographs, vol. 132, American Mathematical Society, Providence,
  RI, 2006. \doi{https://doi.org/10.1090/surv/132}

\bibitem[Pon10]{ponto}
K.~Ponto, \emph{Fixed point theory and trace for bicategories}, Ast\'{e}risque
  (2010), no.~333, xii+102. \arxiv{0807.1471}

\bibitem[PS12]{Ponto_2012}
K.~Ponto and M.~Shulman, \emph{Shadows and traces in bicategories}, Journal of
  Homotopy and Related Structures \textbf{8} (2012), no.~2, 151--200.
  \doi{https://doi.org/10.1007/s40062-012-0017-0} \arxiv{0910.1306}

\bibitem[PS16]{ps:linearity}
\bysame, \emph{The linearity of traces in monoidal categories and
  bicategories}, Theory Appl. Categ. \textbf{31} (2016), Paper No. 23,
  594--689. \arxiv{1406.7854}

\bibitem[Qui73]{Q72}
D.~Quillen, \emph{Higher algebraic {$K$}-theory. {I}}, Algebraic {$K$}-theory,
  {I}: {H}igher {$K$}-theories ({P}roc. {C}onf., {B}attelle {M}emorial {I}nst.,
  {S}eattle, {W}ash., 1972), Lecture Notes in Math., vol. Vol. 341, Springer,
  Berlin-New York, 1973, pp.~85--147.

\bibitem[{The}24]{stacks-project}
{The Stacks project authors}, \emph{The stacks project},
  \url{https://stacks.math.columbia.edu}, 2024.

\bibitem[Wei94]{weibel:homological}
C.~A. Weibel, \emph{An introduction to homological algebra}, Cambridge Studies
  in Advanced Mathematics, vol.~38, Cambridge University Press, Cambridge,
  1994. \doi{10.1017/CBO9781139644136}

\bibitem[Wei13]{weibel:kbook}
\bysame, \emph{The {$K$}-book}, Graduate Studies in Mathematics, vol. 145,
  American Mathematical Society, Providence, RI, 2013, An introduction to
  algebraic $K$-theory. \doi{10.1090/gsm/145}

\bibitem[Wit37]{Witt1937}
E.~Witt, \emph{Zyklische {K}\"orper und {A}lgebren der {C}harakteristik {$p$}
  vom {G}rad pn. {S}truktur diskret bewerteter perfekter {K}\"orper mit
  vollkommenem {R}estklassenk\"orper der charakteristik p.}, Journal für die
  reine und angewandte Mathematik \textbf{176} (1937), 126--140.

\end{thebibliography}

\end{document}